\documentclass[11pt]{article}
\usepackage{mathrsfs}
\usepackage{amsmath}
\usepackage{color}
\usepackage{bbm}
\usepackage{amsmath,amsthm,amssymb,amscd}
\usepackage{latexsym}
\usepackage{hyperref}
\usepackage[numbers,sort&compress]{natbib}
\usepackage{hypernat}
\usepackage{indentfirst}

\newtheorem{theorem}{Theorem}[section]

\newtheorem{lemma}[theorem]{Lemma}

\newtheorem{definition}[theorem]{Definition}

\newtheorem{remark}[theorem]{Remark}

\newtheorem{proposition}[theorem]{Proposition}

\newtheorem{assumption}[theorem]{ Assumption}
\numberwithin{equation}{section}

\begin{document}
	
	\title{{\bf Regularity of global attractors for beam equations with fractional damping and memory}
		\thanks{The work was supported partly by the NSF of China
			(12171094), the Shanghai Key
			Laboratory for Contemporary Applied Mathematics (08DZ2271900), and Fuyang Normal University (2025KYQD0157).}}
	\author{Yu-Ying Duan $^{a}$,  Ti-Jun Xiao $^{b}$\thanks{Corresponding author. E-mail: tjxiao@fudan.edu.cn (T.J. Xiao).}\\
{\small $^a$ School of Mathematics and Statistics, Fuyang Normal University,}\\
{\small Fuyang 236037, China}\\
		{\small $^b$ Shanghai Key Laboratory for Contemporary Applied Mathematics,}\\
		{\small School of Mathematical Sciences, Fudan University, Shanghai 200433, China}}

    \date{}
	\maketitle

	\begin{abstract}
		This paper investigates the long-time behavior of a semilinear beam equation in a domain $\Omega \subset R^{n}$, with memory and fractional damping of the form $(-\Delta)^{\alpha}u_{t}$ ($\alpha \in [0,2]$ the dissipation index).
Two critical growth indices of the nonlinear term are determined for smooth and $C^2$ boundaries respectively, concerning the existence of the associated semigroup.
We prove the existence of global attractor for the semigroup by showing that it possesses a bounded absorbing set and asymptotic compactness. Furthermore, we find out a new way to obtain, for all $\alpha$, higher regularities than anticipated for the attractors,
and the regularity result indicates an interesting phenomenon
that even much weaker damping can produce regularity that is infinitely close to that in the case of strong damping ($\alpha =2$).
 As a consequence, our regularity result deepens and extends the existing related ones for the case
when the memory is absent.

	\end{abstract}
	\vspace{0.4cm}
	
 \noindent {\it Keywords:}\quad Beam equation; Global attractor; High regularity; Fractional damping;\\ \indent Memory.

\vspace{0.4cm}

	\noindent 2020 AMS Subject Classification: 35B40; 35B41; 37L15; 37L30; 74H40; 74D99

	\section{Introduction}
	
	Let $\Omega$ be a bounded domain in $R^{n} ~ (n\in N)$ with boundary $\Gamma$ of $C^2$. We are concerned with longtime behaviors of solutions of a semilinear beam equation with fractional damping and memory:
	\begin{equation}\label{1.1}
		\left\{
		\begin{array}{l}
			u_{tt}+\Delta^{2}u+(-\Delta)^{\alpha}u_{t}-\displaystyle\int_{0}^{\infty}g(s)\Delta^{2}  u(t-s)ds+f(u)=h(x), ~(x,t)\in \Omega\times R^{+},\\
			u=\Delta u=0,  ~(x,t)\in \Gamma \times R,\\
			u(x,\tau)=u_{0}(x,\tau), u_{t}(x,0)=u_{1}(x),~ x\in \Omega,~ \tau\le 0.
		\end{array}
		\right.
	\end{equation}
Here $(-\Delta)^{\alpha}u_{t}$ is the fractional damping ($\alpha\in[0,2]$), and the damping with $\alpha\in(0,2)$ is stronger than the weak damping ($\alpha = 0$) yet weaker than the strong damping ($\alpha = 2$); the nonnegative and decreasing function $g$ serves as the memory kernel and $u(t-s)$ reflects the system's past history; $f(u)$ is a nonlinear source, and $h(x)$ a given external force.
In viscoelastic beam materials, hereditary viscoelastic effects may coexist with instantaneous structural dissipation. As a time delayed nonlocal effect, the memory term effectively captures the influence of the material's deformation history, whereas the fractional damping serves as an effective model of frequency dependent structural damping (cf. \cite{Chen-1982}). Thus, the interplay between the two damping mechanisms is relevant for both physical modeling and the long-time behavior of the system.

 Denote by $A$ the negative Laplacian $-\Delta$ in $L^2(\Omega)$, with domain $D(A) = H^2(\Omega)\cap H_{0}^{1}(\Omega)$, and $A^\alpha : D(A^\alpha) \subset L^2(\Omega) \to L^2(\Omega)$ represents the fractional power of $A$ of order $\alpha \in [0,2]$. For simplicity, we write $V_{2s} := D(A^s)$ and endow it with the following inner product and norm
	\begin{equation*}
		\langle u,v\rangle _{V_{s}}=\langle A^{\frac{s}{2}}u, A^{\frac{s}{2}}v\rangle, ~\|u\|_{V_{s}}=\|A^{\frac{s}{2}}u\|,
	\end{equation*}
	where $\|\cdot\|$ and $\langle \cdot, \cdot\rangle$ denote the
	$L^{2}$-norm and $L^{2}$-inner product, respectively. Then $$V_0 = L^2(\Omega), V_1 = H_0^1(\Omega), V_2 = H^2(\Omega)\cap H_{0}^{1}(\Omega), ~\mbox{and}~V_s \hookrightarrow L^{\frac{2n}{n-2s}}(\Omega)~(s \ge 0).$$ Moreover,
	\begin{equation*}\label{V}
		\|u\|_{V_{s}}\le c\|u\|_{V_{r}}, ~s<r,
	\end{equation*}
with some constant $c>0$.
	
	As in \cite{Dafermos-1970}, denoting
	\begin{align*}
		\eta(x,t,s)=u(x,t)-u(x,t-s),
	\end{align*}
	we can transform the nonautonomous \eqref{1.1} into the following autonomous system
	\begin{equation}\label{1.2}
		\left\{
		\begin{array}{l}
			u_{tt}+kA^{2}u+A^{\alpha}u_{t}+\displaystyle\int_{0}^{\infty}g(s)A^{2}\eta(x,t,s)ds+f(u)=h(x),~(x,t)\in \Omega\times R^{+},\\
			\eta_{t}=-\eta_{s}+u_{t},~(x,t,s)\in \Omega\times R^{+}\times R^{+},\\
			u=\Delta u=\eta=\Delta \eta=0,~ (x,t)\in \Gamma \times R^{+}, ~s\in R^{+},\\
			u(x,\tau)=u_{0}(x,\tau), u_{t}(x,0)=u_{1}(x), ~x\in \Omega, ~\tau\le 0,\\
			\eta(x,0,s)=\eta_{0}(x,s), ~(x,s)\in\Omega\times R^{+},
		\end{array}
		\right.
	\end{equation}
	where $k:=1-\int_{0}^{\infty}g(s)ds$. Let us define the history space with respect to memory kernel $g$ by
	\begin{align*}
		\mathcal{M}_{s}=L_{g}^{2}(R^{+}, V_{s})=\left\{\eta\in V_{s} ;~ \|\eta\|_{\mathcal{M}_{s}}<\infty\right\},
	\end{align*}
	with inner and norm given by
	\begin{align*}
		\langle \eta,\zeta\rangle_{\mathcal{M}_{s}}=\int_{0}^{\infty}g(\tau)\langle\eta,\zeta\rangle_{V_{s}}d\tau, ~~\|\eta\|_{\mathcal{M}_{s}}^{2}=\int_{0}^{\infty}g(\tau)\|\eta\|_{V_{s}}^{2}d\tau,~~\eta,\zeta\in \mathcal{M}_{s}.
	\end{align*}
	Moreover, our phase space is
	\begin{align*}
		\mathcal{H}=V_{2}\times V_{0}\times \mathcal{M}_{2},
	\end{align*}
	which is equipped with the norm
	\begin{align*}
		\|(u,u_{t},\eta)\|_{\mathcal{H}}^{2}=k\|\Delta u\|^{2}+\|u_{t}\|^{2}+\|\eta\|_{\mathcal{M}_{2}}^{2}.
	\end{align*}
	Additionally, we define regular (or weak regular) space by
	\begin{align*}
		\mathcal{H}^{s}=V_{s+2}\times V_{s}\times \mathcal{M}_{s+2},
	\end{align*}
	endowed with the norm
	\begin{align*}
		\|(u,u_{t},\eta)\|_{\mathcal{H}^{s}}^{2}=k\|u\|_{V_{s+2}}^{2}+\|u_{t}\|_{V_{s}}^{2}+\|\eta\|_{\mathcal{M}_{s+2}}^{2}.
	\end{align*}
	
		There has been substantial research on the long-time behavior of wave equations with fractional damping; see, e.g., \cite{Azevedo-2023,Li-2021, Yang-2018} and the references therein. Part of the analysis has been extended to the case of extensible beam equations
	\begin{align}\label{1.111}
		u_{tt}+\Delta^{2}u-\kappa M\left(\|\nabla u\|^{2}\right)\Delta u=p(u,u_{t},x)
	\end{align}
	($\kappa\ge0$), which can be viewed as a special case of the Berger plate model \cite{Berger-1955}. Some studies have investigated the existence of global attractors and finite dimensional exponential attractors for  models of type \eqref{1.111} (cf. \cite{Silva-2015,Gomes-2024,Freitas-2025,Ma-2010}). Additionally, research on the regularity and smoothness of global attractors for beam equations with fractional damping has yielded notable results.  For the beam equation with fractional damping $(-\Delta)^{\alpha}u_{t}$, where $\alpha \in (1,2)$ and $\kappa = 0$, Yang and Ding \cite{Yang-2017} identified a critical exponent $p_{\alpha} = \frac{n+2(2\alpha-1)}{(n-2(2\alpha-1))^{+}}$ for Boussinesq type equations with nonlinear term $-\Delta f(u)$ and showed that, for $1 \leq p < p_{\alpha}$, the corresponding dynamical system admits a global attractor, along with an exponential attractor in the strong topology. More recently,  Liu, Yang, and Guo \cite{Liu-2024} established the optimal subcritical growth condition for the nonlinear term $f(u)$ with respect to the dissipation exponent $\alpha$, namely $1 \leq p < p_{\alpha}' = \frac{n+4\alpha}{(n-4)^{+}}$ for $\alpha \in (0,1)$. They proved the existence of strong global attractors and exponential attractors, which possess regularity $V_{4}\times V_{2+\alpha}$. For additional insights regarding the regularity of attractors, one may consult references \cite{Giorgi-2009, la26, Yang-2013}.

	In this paper, we study fractional damping models that incorporate memory effects (arising from viscoelastic materials). Due to the presence of
the memory term, the embedding relation  $\mathcal{M}_{s_{1}} \subset \mathcal{M}_{s_{2}}$ for $s_{1}>s_{2}$, is continuous but not compact, necessitating the use of alternative techniques to prove asymptotic compactness. There are a lot of researches regarding global attractors for wave equations with memory (cf. \cite{Cavalcanti-2016, Conti-2005-, Messaoudi-2017,Pata-2001}). Also, there are some works \cite{Di Plinio-2008,Ma-2004,Yang-2024} on global attractors for wave equations with memory and weak damping $u_t$ or strong damping $-\triangle u_t$.

However, to the best of our knowledge, no regularity results have been established for attractors of beam equations with {\bf memory}. By a regularity result for the global attractor of a semigroup $S(t)$ on a phase space $H$ of functions on $\Omega$, we mean that the attractor lies in a subset of $H$ consisting of more (spatially) regular functions.

This paper aims to fill this gap by performing a regularity analysis for global attractors of \eqref{1.1} and, more significantly, to achieve regularity estimates that exceed what would be predicted by simply analogizing from the known results of wave equations with memory or beam equations with fractional damping. The method developed here may also be applied to wave equations with fractional damping and memory, yielding higher regularity than known results (see Remark \ref{re}).

 The main features and novelties of this paper are as follows.

\begin{enumerate}

\item[\rm{(1)}] This work reveals the distinct effects of memory and fractional damping on attractor regularity and nonlinear critical growth. The memory mechanism governs history and long-time relaxation, whereas the fractional damping provides velocity regularity control. Meanwhile, the fractional damping helps enlarge the admissible growth range of the nonlinearity, while the memory term may restrict this gain. By characterizing the interaction between the two mechanisms, we determine the critical growth threshold of the nonlinearity and establish higher regularity attractor beyond the existing results for related beam equations.

\item[\rm{(2)}]  The presence of the memory term, with a weaker exponential-decay condition than the traditional one for the memory kernel $g$, necessitates the introduction of additional auxiliary
functions, which affect the critical exponents for nonlinear terms under $C^2$ boundaries.
	
		\item[\rm{(3)}] We find out an effective way to achieve {\it high regularities} of global attractors, that is, to derive regularity estimates for the time derivatives through a generalized quasi-stability inequality, and then utilize a bootstrap argument. This enables us to establish, for every $\alpha\in[0,2]$, the regularity of the attractor in the space $V_{4} \times V_{4-\epsilon} \times \mathcal{M}_{4}$ ($\epsilon$ can be an arbitrarily small positive constant, and 0 if $\alpha=2$).
Interestingly, this means that even much weaker damping can
 produce a regularity that is infinitely close to that in the case of strong damping ($\alpha=2$).
  In the special case when the memory term vanishes, i.e., $g(s) \equiv 0$,  the above regularity space becomes $V_{4} \times V_{4-\epsilon}$ (see Remark \ref{re1}), which improves on the regularity space $V_{4} \times V_{2+\alpha}$ given in \cite{Liu-2024} for $\alpha\in(0,1]$. For the case of $\alpha\in(1,2]$, our result also supplements the regularity results of the global attractor for the beam equation; actually, there seems to have been no results yet about attractors with higher regularity than the underlying regularity of the phase spaces (cf. \cite{Yang-2017}).

		\item[\rm{(4)}] The existing literature has studied the regularities of global attractors mainly under smooth boundary conditions (as in \cite{Liu-2024,Yang-2017}). Here, we consider both smooth and $C^{2}$ boundaries and compare the two cases. The $C^2$ case requires a stricter condition on the nonlinear growth index to ensure the continuity of the associated semigroup from $\mathcal{H}$ to itself. Furthermore, the proof strategies differ: for the smooth boundary, the higher regularity of solutions allows us to apply the bootstrap argument more directly, while for the $C^2$ boundary, before applying the bootstrap argument we need to estimate the difference between two trajectories in order to control the regularity of their time derivatives.

	\end{enumerate}

 Some main results in the paper have been stated in \cite{Duan25}. Moreover, in our future work, we will pay attention to the models with nonautonomous damping or fractional brownian motions, since there are many good developments in the study of the relevant systems involving variable coefficients, nonautonomous damping, random disturbances or fractional brownian motions (cf. \cite{IX24,LX20,LX25,PSX24} and references therein).
	
 	The main structure of this paper is as follows. In Section 2, we  provide some preliminary material and the well-posedness of the solution. The existence of the global attractor is established in Section 3.  The higher regularity of the attractor is presented in Section 4.
	
	Throughout this paper, $C(r)$ denotes a positive constant depending on $r$ while $C$ is a generic positive constant, and $\varepsilon, \varepsilon_{0}$ stand for small positive constants, which may vary from line to line.
	\section{Preliminaries and Well-Posedness}
	First, we state the following basic assumptions.
	\begin{assumption}\label{ass1}
		\begin{enumerate}
			\item [\rm{(H1)}] $f\in C^{1}(R)$ with $f(0)=0$, satisfying
			\begin{align}\label{f1}
				\liminf_{|s|\to \infty}\frac{f(s)}{s}>-k\lambda_{1},
			\end{align}
			where $\lambda_{1}>0$ is the principal eigenvalue of operator $\Delta^{2} $ in $H^{2}(\Omega)\cap H_{0}^{1}(\Omega)$, and
			\begin{align}\label{f2}
				|f^{\prime}(s)|\le C\left(1+|s|^{p-1}\right),~ s\in R,
			\end{align}
			with
			\begin{equation*}
				1\le p<\left\{
				\begin{array}{l}
					\infty, ~~1\le n\le 4, \\
					\frac{n+4}{n-4},~ n\ge 5.
				\end{array}
				\right.
			\end{equation*}
			\item[{\rm(H2)}] $g:\lbrack 0,+\infty)\to [0,+\infty)$ is a decreasing and locally absolutely continuous function  which satisfies
			\begin{align}\label{g1}
				G(s)\le \delta g(s),~~s\ge 0,
			\end{align}
			where $G(s):=\int_{s}^{\infty}g(\tau)d\tau$, and $\delta>0$ is a constant.
			\item[{\rm(H3)}] $h(x)\in L^{2}(\Omega)$.
		\end{enumerate}
	\end{assumption}
	\begin{remark}{\rm
		Take $\lambda\in (0,k\lambda_{1})$; the condition \eqref{f1} implies the existence of a positive constant $C$ such that
		\begin{align}\label{f3}
			f(s)s\ge -\lambda s^{2}-C,~ F(s)=\int_{0}^{s}f(\tau)d\tau\ge -\frac{\lambda}{2}s^{2}-C, ~s\in R.
		\end{align}

	The condition \eqref{g1} has been used in several papers (cf. \cite{Di Plinio-2008,Gatti-2008}) and is weaker than the usual exponential-decay condition  $g'(s)+\delta g(s)\le 0$ ($s\ge 0$) assumed in, e.g., \cite{Cavalcanti-2016,Ma-2010,Yang-2024};  the latter condition restricts $g$ from having horizontal line segments, even horizontal inflection points. As shown in \cite{Gatti-2008}, condition \eqref{g1} is equivalent to
		\begin{align}\label{G}
			g(s+\tau)\le K e^{-\delta\tau}g(s),
		\end{align}
		for every $\tau>0, s>0$ and some $K\ge1$.}
	\end{remark}

	\subsection{Preliminary lemmas}
	Let us look at the equations
	\begin{equation}\label{1.3-}
		\left\{
		\begin{array}{l}
			\varphi_{tt}+kA^{2}\varphi+A^{\alpha}\varphi_{t}+\displaystyle\int_{0}^{\infty}g(s)A^{2}\psi(x,t,s)ds+\mathcal{F}(x,\varphi)=0, ~(x,t)\in \Omega\times R^{+},\\
			\psi_{t}=-\psi_{s}+\varphi_{t}, ~ (x, t, s)\in\Omega\times R^{+}\times R^{+}, \\
			\varphi=\psi=\Delta \varphi=\Delta\psi=0, ~(x,t,s)\in \Gamma\times R^{+}\times R^{+};
		\end{array}
		\right.
	\end{equation}
	here, $\psi(x,t,s)=\varphi(x,t)-\varphi(x,t-s)$, and $\mathcal{F}$ is the force function to be determined. For \eqref{1.3-}, we introduce the auxiliary functions:
	\begin{align*}
		E_{\sigma_{1}}(\varphi)=\frac{1}{2}\|(\varphi,\varphi_{t},\psi)\|_{\mathcal{H}^{\sigma_{1}}}^{2},
	\end{align*}
	\begin{align*}
		I_{\sigma_{2}}(\varphi)=\int_{\Omega}\varphi_{t}A^{\sigma_{2}}\varphi dx+\frac{1}{2}\|\varphi\|_{V_{\alpha+\sigma_{2}}}^{2},
	\end{align*}
	\begin{align}\label{P4}
		J_{\sigma_{2}}(\varphi)=\int_{0}^{\infty}G(s)\|\psi-\varphi\|_{V_{\sigma_{2}+2}}^{2}ds.
	\end{align}
	In particular, for $\mathcal{F}(x,\varphi)=f(\varphi)-h(x)$, set
	\begin{align*}
		\mathcal{E}_{\sigma_{1}}(\varphi)=E_{\sigma_{1}}(\varphi)+\int_{\Omega}f(\varphi)A^{\sigma_{1}}\varphi dx-\int_{\Omega}h(x)A^{\sigma_{1}}\varphi dx.
	\end{align*}
	After differentiating the above functions, we obtain the following formulas and estimate:
	\begin{align}\label{P5}
			\frac{d}{dt}E_{\sigma_{1}}(\varphi)+\|\varphi_{t}\|_{V_{\sigma_{1}+\alpha}}^{2}-\frac{1}{2}\int_{0}^{\infty}g^{\prime}(s)
\|\psi\|_{V_{\sigma_{1}+2}}^{2}ds=-\int_{\Omega}\mathcal{F}(x,\varphi)A^{\sigma_{1}}\varphi_{t}dx,
		\end{align}
				\begin{align}\label{P7}
			&\frac{d}{dt}I_{\sigma_{2}}(\varphi)+k\|\varphi\|_{V_{2+\sigma_{2}}}^{2}-\|\varphi_{t}\|_{V_{\sigma_{2}}}^{2}\nonumber\\
			=&-\displaystyle\int_{0}^{\infty}g(s)\int_{\Omega}A^{\frac{2+\sigma_{2}}{2}}\psi A^{\frac{2+\sigma_{2}}{2}}\varphi dxds-\int_{\Omega}\mathcal{F}A^{\sigma_{2}}\varphi dx,
		\end{align}
		\begin{align}\label{P8}
			\frac{d}{dt}J_{\sigma_{2}}(\varphi)+\|\psi\|_{\mathcal{M}_{\sigma_{2}+2}}^{2}=2\displaystyle\int_{0}^{\infty}g(s)\int_{\Omega}A^{\frac{2
+\sigma_{2}}{2}}\psi A^{\frac{2+\sigma_{2}}{2}}\varphi dxds,
		\end{align}
		\begin{align}\label{P6}
			\frac{d}{dt}\mathcal{E}_{\sigma_{1}}(\varphi)+\|\varphi_{t}\|_{V_{\sigma_{1}+\alpha}}^{2}-\frac{1}{2}\int_{0}^{\infty}
g^{\prime}(s)\|\psi\|_{V_{\sigma_{1}+2}}^{2}ds=\int_{\Omega}f^{\prime}(\varphi)\varphi_{t}A^{\sigma_{1}}\varphi dx,
		\end{align}
		and for $\varepsilon$ small enough and $\sigma_{1}\le\sigma_{2}\le \sigma_{1}+\alpha$,
		\begin{align}\label{P9}
			C E_{\sigma_{1}}(\varphi)\le   E_{\sigma_{1}}(\varphi)+\varepsilon I_{\sigma_{2}}(\varphi)+\frac{\varepsilon}{2}J_{\sigma_{2}}(\varphi)\le C\left(E_{\sigma_{1}}(\varphi)+\|\psi\|_{\mathcal{M}_{2+\sigma_{2}}}^{2}+\|\varphi\|_{V_{2+\sigma_{2}}}^{2}\right).
		\end{align}

	\subsection{Generation of a Dynamical System}
	First, we give the existence result for weak solutions.
\begin{proposition}\label{th1}
		Let the Assumption \ref{ass1} hold. Then for initial datum  $$U_{0}:=(u_{0},u_{1},\eta_{0})\in \mathcal{H},$$ the problem \eqref{1.2} have a weak solution $U:=(u,u_{t}, \eta)\in C([0,T];\mathcal{H})$.
	\end{proposition}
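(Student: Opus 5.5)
We will construct the weak solution by a Faedo--Galerkin scheme combined with the energy identity \eqref{P6} and a compactness argument. Let $\{e_j\}$ be the eigenfunctions of $A$ (these are also eigenfunctions of $A^2$ and $A^\alpha$ with the same boundary conditions $u=\Delta u=0$). For the memory component, fix an orthonormal basis $\{\zeta_l\}$ of $L^2_g(R^+)$ made of smooth compactly supported functions, so that $\{\zeta_l e_j\}$ is total in $\mathcal{M}_2$. The approximate solution $(u^m,\eta^m)$ is sought in $\mathrm{span}\{e_1,\dots,e_m\}\times\mathrm{span}\{\zeta_le_j: l,j\le m\}$. Projecting \eqref{1.2} onto this space gives a finite-dimensional ODE system with a locally Lipschitz nonlinearity, since $f\in C^1$. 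Hence it has a local solution, with initial data the projections of $U_0$.

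\emph{A priori estimate.} Take \eqref{P6} with $\sigma_1=0$, $\varphi=u^m$, $\psi=\eta^m$, $\mathcal F=f(u^m)-h$. Written in energy form, the right-hand side vanishes once $F$ is used in place of $\int f(\varphi)\varphi$: the correct functional is $E_0+\int_\Omega F(u)-\int_\Omega hu$. Since $g'\le 0$, the memory term is nonnegative. Using \eqref{f3} with $\lambda<k\lambda_1$ together with Poincaré's inequality $\lambda_1\|u\|^2\le\|u\|_{V_2}^2$, we get
\[
\mathcal{E}(t)\ge c\|U^m(t)\|_{\mathcal H}^2-C(1+\|h\|^2).
\]
Moreover $\int F(u_0)$ is finite: the growth condition \eqref{f2} with $p<\frac{n+4}{n-4}$ gives $|F(s)|\le C(|s|^2+|s|^{p+1})$, and $V_2\hookrightarrow L^{p+1}$ because $p+1\le\frac{2n}{n-4}$. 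Therefore $\|U^m(t)\|_{\mathcal H}+\int_0^T\|u^m_t\|_{V_\alpha}^2\,dt\le C(T,\|U_0\|_{\mathcal H})$ uniformly in $m$. This gives global existence of the approximations on $[0,T]$.

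\emph{Passage to the limit.} From these bounds we extract weak-$*$ limits: $u^m\to u$ in $L^\infty(0,T;V_2)$, $u^m_t\to u_t$ in $L^\infty(0,T;V_0)\cap L^2(0,T;V_\alpha)$, and $\eta^m\to\eta$ in $L^\infty(0,T;\mathcal M_2)$. From the equation, $u^m_{tt}$ is bounded in $L^2(0,T;V_{-2})$, since $A^\alpha u_t$ lies in $V_{-\alpha}\subset V_{-2}$ and the memory integral is bounded in $V_{-2}$ by Cauchy--Schwarz in $L^2_g$. The Aubin--Lions lemma then gives $u^m\to u$ strongly in $C([0,T];V_{2-\epsilon})$, hence a.e.

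The main point is the nonlinear term. We will show that $f(u^m)$ is bounded in $L^\infty(0,T;L^{q})$ for some $q>1$ with $L^q\subset V_{-2}$; the choice $q=\frac{2n}{n+4}$ works because $|f(u)|\le C(|u|+|u|^p)$ and $p\cdot\frac{2n}{n+4}\le\frac{2n}{n-4}$ in the subcritical range (if $n\le 4$ this step is trivial). Together with a.e. convergence, this yields $f(u^m)\rightharpoonup f(u)$ weakly. The linear memory equation $\eta_t=-\eta_s+u_t$ passes to the limit in the distributional sense. We test against $\zeta_l e_j$ and integrate by parts in $s$; this uses $g$ decreasing and absolutely continuous, so the boundary term at $s=0$ vanishes because $\eta(0)=0$. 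We obtain the representation $\eta(t,s)=u(t)-u(t-s)$ with the past history $u_0$.

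\emph{Continuity in time.} It remains to upgrade to $U\in C([0,T];\mathcal H)$. Weak continuity follows from boundedness in $\mathcal H$ together with continuity in a weaker space. For strong continuity we will justify the energy equality for the limit by a Lions--Magenes type argument: $u_t\in L^2(0,T;V_\alpha)$ and $u_{tt}\in L^2(0,T;V_{-2})$ do not pair directly, so we regularize with $(I+\epsilon A)^{-N}$ and pass $\epsilon\to0$. The memory component can instead be handled through the explicit solution formula for the transport semigroup on $\mathcal M_2$, which is a contraction $C_0$-semigroup with generator $-\partial_s$; this is where \eqref{g1}, via \eqref{G}, is convenient. The energy equality combined with weak continuity gives norm continuity, hence strong continuity. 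We expect this final regularization step, particularly the bookkeeping for the memory part, to be the main technical obstacle.
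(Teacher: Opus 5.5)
Your construction of the weak solution follows the paper's route: Faedo--Galerkin, the energy bound built on $E_0+\int_\Omega F(u)-\int_\Omega hu$ together with \eqref{f3} and Poincar\'e, weak-$*$ limits, Aubin--Lions, and a.e.\ convergence plus a uniform $L^q$ bound for $f(u^m)$. The differences are harmless: you use a Pata--Zucchi tensor basis $\zeta_l e_j$ where the paper writes $\eta_m=\sum_i(a_{im}(t)-b_{im}(t-s))\omega_i$, and you bound $f(u^m)$ in $L^\infty(0,T;L^{2n/(n+4)})$ where the paper uses $L^{(p+1)/p}$.

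\textbf{The gap is in the continuity step, and it lies in the nonlinear term, not the memory.} To let $\epsilon\to0$ in the regularized energy identity you need $\int_0^t\langle f(u),(I+\epsilon A)^{-2N}u_t\rangle\,d\tau\to\int_0^t\!\int_\Omega f(u)u_t\,dx\,d\tau$, i.e.\ $f(u)u_t\in L^1(\Omega\times(0,T))$. Your bounds are $f(u)\in L^\infty(0,T;L^{2n/(p(n-4))})$ and $u_t\in L^2(0,T;V_\alpha)\subset L^2(0,T;L^{2n/(n-2\alpha)})$. These are in duality only if $p\le\frac{n+2\alpha}{n-4}$, which is exactly the paper's $p^{**}$. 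Your own $L^{2n/(n+4)}$ bound pairs only with $V_2$, so it would need $\alpha=2$.

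Assumption \ref{ass1} allows $n\ge5$, $\alpha<2$ and $\frac{n+2\alpha}{n-4}<p<\frac{n+4}{n-4}$. In that range the chain rule $\frac{d}{dt}\int_\Omega F(u)=\int_\Omega f(u)u_t$ is unjustified. You then have no energy equality, and weak continuity does not upgrade to norm continuity. Spatial mollification cannot fix this, because it is the nonlinear pairing itself that lacks duality. Nor does the damping buy extra integrability of $u$ here. The $I_{\sigma_2}$-estimate only closes together with $J_{\sigma_2}$, and $J_{\sigma_2}$ at $t=0$ equals $\int_0^\infty G(s)\|\eta_0(s)-u(0)\|_{V_{2+\sigma_2}}^2ds$, which is infinite for general data in $\mathcal H$.

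The memory part you worried about is routine. Once $u\in C([0,T];V_2)$, use the explicit formula $\eta(t,s)=u(t)-u(t-s)$ for $s<t$ and $\eta(t,s)=\eta_0(s-t)+u(t)-u(0)$ for $s\ge t$. Combine it with strong continuity of right translations on $L^2_g(R^+;V_2)$. These translations form a contraction semigroup because $g$ is nonincreasing, so \eqref{g1} is not needed. This gives $\eta\in C([0,T];\mathcal M_2)$, which is the Pata--Zucchi argument the paper invokes.

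As written, your proof yields $U\in C([0,T];\mathcal H)$ only for $n\le4$ or $p\le\frac{n+2\alpha}{n-4}$. Otherwise you get only weak continuity, and the remaining range needs genuinely new space--time integrability, e.g.\ Strichartz-type estimates. The paper does not close this either: it defers $(u,u_t)\in C([0,T];V_2\times L^2)$ to \cite{Li-2020}. When it later needs continuity of $S^\alpha(t)$ on $\mathcal H$ for $\Gamma\in C^2$ (Theorem \ref{thm1}), it imposes exactly \eqref{assp1}.
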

	\begin{proof} We employ the standard Faedo-Galerkin method (cf. \cite{Li-2020,Pata-2001}).
		\begin{enumerate}
			\item[\rm (1)] Let $\{\omega_{i}\}_{i\ge 1}$ be an orthogonal basis for $H_{0}^{1}(\Omega)$ which is orthonormal in $L^{2}(\Omega)$ with
			\begin{equation*}
				\left\{
				\begin{array}{l}
					-\Delta \omega_{i}=\lambda_{i}\omega_{i}, ~ x\in\Omega,\\
					~~~~~\omega_{i}=0, ~~~~~ x\in \Gamma.
				\end{array}
				\right.
			\end{equation*}
			By the ODE theory, we can find $\{a_{im}(t)\}_{i\ge 1}$ and $\{b_{im}(t-s)\}_{i\ge 1}$ for $t,s\in R^{+}$ such that
			\begin{align*}
				u_{m}=\sum_{i=1}^{m}a_{im}(t)\omega_{i}(x),~~\eta_{m}(x,t,s)=\sum_{i=1}^{m}\left(a_{im}(t)-b_{im}(t-s)\right)\omega_{i}(x),
			\end{align*}
			which is the solution to the following approximate problems
			\begin{equation}\label{1.3}
				\left\{
				\begin{array}{l}
					\left(u_{mtt}+kA^{2}u_{m}+\displaystyle\int_{0}^{\infty}g(s)A^{2}\eta_{m}(s)ds, \omega_{i}\right)+\left(f(u_{m})+A^{\alpha}u_{mt},\omega_{i}\right)=(h,\omega_{i}), \\
					(u_{m}(0), \omega_{i})=\varphi_{im}, ~  (u_{mt}(0), \omega_{i})=\psi_{im}, ~(\eta_{m}(0,s),\omega_{i})=\xi_{im}, i=1,2,...m.
				\end{array}
				\right.
			\end{equation}
			Here $\varphi_{im}, \psi_{im}, \xi_{im}$ are chosen such that
			\begin{equation*}
				\left\{
				\begin{array}{l}
					\sum_{i=1}^{m}\varphi_{im}\omega_{i}\to u_{0}\mbox{~in~}V_{2},\\
					\sum_{i=1}^{m}\psi_{im}\omega_{i}\to u_{1}\mbox{~in~}L^{2}(\Omega),\\
					\sum_{i=1}^{m}\xi_{im}\omega_{i}\to \eta_{0}\mbox{~in~} \mathcal{M}_{2}.
				\end{array}
				\right.
			\end{equation*}
			Multiplying equation \eqref{1.3} by $a_{im}^{\prime}(t)$, then summing with respect to $i$, we get
			\begin{align*}
				&\frac{d}{dt}\left( \frac{1}{2}\|u_{mt}\|^{2}+\frac{k}{2}\|Au_{m}\|^{2}+\frac{1}{2}\|\eta_{m}\|_{\mathcal{M}_{2}}^{2}+\int_{\Omega}F(u_{m})dx\right)\nonumber\\
				&+\|A^{\frac{\alpha}{2}}u_{mt}\|^{2}-\frac{1}{2}\int_{0}^{\infty}g^{\prime}(s)\|\eta_{m}^{t}(s)\|_{V_{2}}^{2}ds=(h(x), u_{mt}).
			\end{align*}
			By the assumption \ref{ass1}, and integrating with respect to $t$, we obtain
			\begin{align}\label{1.4}
				\|u_{mt}\|^{2}+k\|Au_{m}\|^{2}+\|\eta_{m}\|_{\mathcal{M}_{2}}^{2}+2\int_{\Omega}F(u_{m})dx+\int_{0}^{T} \|A^{\frac{\alpha}{2}}u_{mt}\|^{2}dt\le C_{T}.
			\end{align}
			Estimate \eqref{1.4} means
			\begin{equation*}
				\left\{
				\begin{array}{l}
					\{u_{m}\}\mbox{~ uniformly ~bounded ~in~}L^{\infty}\left([0,T]; V_{2}\right),\\
					\{u_{mt}\}\mbox{~ uniformly ~bounded ~in~}L^{\infty}([0,T]; L^{2}(\Omega))\cap L^{2}\left([0,T];V_{\alpha}\right),\\
					\{\eta_{m}\}\mbox{~ uniformly ~bounded ~in~}L^{\infty}([0,T]; \mathcal{M}_{2}).
				\end{array}
				\right.
			\end{equation*}
			By the Banach-Alaoglu theorem, there exists a function $U:=(u,u_{t},\eta)$ such that
			\begin{equation*}
				\left\{
				\begin{array}{l}
					\{u_{m}\}\to u\mbox{~ weakly ~star ~in~}L^{\infty}([0,T]; V_{2}),\\
					\{u_{mt}\}\to u_{t}\mbox{~ weakly~star ~in~}L^{\infty}([0,T]; L^{2}(\Omega)),\\
					\{A^{\alpha}u_{mt}\}\to A^{\alpha}u_{t}\mbox{~ weakly ~in~}L^{2}([0,T]; (V_{\alpha})^{'}),\\
					\{\eta_{m}\}\to \eta \mbox{~ weakly ~star ~in~}L^{\infty}([0,T]; \mathcal{M}_{2}).
				\end{array}
				\right.
			\end{equation*}
			Applying the Aubin-Lions lemma, we deduce
			\begin{align*}
				u_{m}\to u\mbox{~strongly ~in~} L^{\infty}([0,T]; V_{r}),~ 0\le r<2,
			\end{align*}
			and hence
			\begin{align*}
				u_{m}\to u\mbox{~~a.e. in~} \Omega\times [0,T],
			\end{align*}
			which implies
			\begin{align*}
				f(u_{m})\to f(u)\mbox{~~a.e. in~} \Omega\times [0,T].
			\end{align*}
			By \eqref{f2}, $f(u_{m})$ is uniformly bounded in $L^{\frac{p+1}{p}}(\Omega\times[0,T])$. Therefore, from the Strauss Lemma it follows that
			\begin{align*}
				f(u_{m})\to f(u) \mbox{~weakly ~in~} L^{\frac{p+1}{p}}([0,T];L^{\frac{p+1}{p}}(\Omega)).
			\end{align*}
			Let $\langle\!\langle \cdot , \cdot \rangle\!\rangle$ denote the duality pairing between
			$H_{g}^{1}(\mathbb{R}^{+}, V_{2})$ and its dual space. Similarly as in \cite{131},
			we obtain
			\[
			\lim_{m \to \infty} \langle\!\langle \eta_{m}', \xi \rangle\!\rangle
			= \langle\!\langle \eta', \xi \rangle\!\rangle .
			\]

Next, we show $U$ satisfies the initial condition. Choosing the test function $$(\varphi(t),\xi(t,s))\in C^{\infty}([0,T]; V_{2})\times C^{\infty}([0,T]; C^{\infty}(R^{+}, V_{2}))$$ with $\varphi(T)=0, \xi(T)=0$ in \eqref{1.3} and integrating by parts, we have
			\begin{align*}
				& -\int_{0}^{T} \int_{\Omega}u_{mt}\varphi_{t}dx dt  +k\int_{0}^{T}\int_{\Omega}Au_{m}A\varphi dxdt+\int_{0}^{T}\displaystyle\int_{0}^{\infty}g(s)\int_{\Omega}A\eta_{m}A\varphi dxdsdt\nonumber\\
				& +\int_{0}^{T}\int_{\Omega}(f(u_{m})-h) \varphi dxdt  -\int_{0}^{T} \int_{\Omega}A^{\frac{\alpha}{2}}u_{m}A^{\frac{\alpha}{2}}\varphi_{t}dx\nonumber\\
				=&  \int_{\Omega}u_{mt}(0)\varphi(0)dx +\int_{\Omega}  A^{\frac{\alpha}{2}}u_{m} (0)A^{\frac{\alpha}{2}}\varphi(0) dx,
			\end{align*}
			and
			\begin{align*}
				& - \int_{0}^{T}\displaystyle\int_{0}^{\infty}g(s)\int_{\Omega}\eta_{m}\xi_{t} dxdsdt+\int_{0}^{T}\displaystyle\int_{0}^{\infty}g(s)\int_{\Omega}\eta_{ms}\xi dxdsdt\nonumber\\
				&+\int_{0}^{T}\displaystyle\int_{0}^{\infty}g(s)\int_{\Omega}u_{m}\xi_{t} dxdsdt\nonumber\\
				=&\displaystyle\int_{0}^{\infty}g(s)\int_{\Omega}\eta_{m}(0)\xi(0) dxds-\int_{0}^{T}\displaystyle\int_{0}^{\infty}g(s)\int_{\Omega}u_{m}(0)\xi(0) dxdsdt.
			\end{align*}
			Taking limits as $m\to \infty$, we get
			\begin{align*}
				& -\int_{0}^{T} \int_{\Omega}u_{t}\varphi_{t}dx dt  +k\int_{0}^{T}\int_{\Omega}AuA\varphi dxdt+\int_{0}^{T}\displaystyle\int_{0}^{\infty}g(s)\int_{\Omega}A\eta A\varphi dxdsdt\nonumber\\
				& +\int_{0}^{T}\int_{\Omega}(f(u)-h) \varphi dxdt  -\int_{0}^{T} \int_{\Omega}A^{\frac{\alpha}{2}}u A^{\frac{\alpha}{2}}\varphi_{t}dx\nonumber\\
				=&  \int_{\Omega}u_{t}(0)\varphi(0)dx +\int_{\Omega}  A^{\frac{\alpha}{2}}u (0)A^{\frac{\alpha}{2}}\varphi(0) dx,
			\end{align*}
			and
			\begin{align*}
				& - \int_{0}^{T}\displaystyle\int_{0}^{\infty}g(s)\int_{\Omega}\eta\xi_{t} dxdsdt+\int_{0}^{T}\displaystyle\int_{0}^{\infty}g(s)\int_{\Omega}\eta_{s}\xi dxdsdt\nonumber\\
				&+\int_{0}^{T}\displaystyle\int_{0}^{\infty}g(s)\int_{\Omega}u\xi_{t} dxdsdt\nonumber\\
				=&\displaystyle\int_{0}^{\infty}g(s)\int_{\Omega}\eta(0)\xi(0) dxds-\int_{0}^{T}\displaystyle\int_{0}^{\infty}g(s)\int_{\Omega}u(0)\xi(0) dxdsdt.
			\end{align*}
			This implies that $(u_{m}(0), u_{mt}(0), \eta_{m}(0))\to (u_{0}, u_{1}, \eta_{0})$. Therefore,  $(u,u_{t}, \eta^{t})$ is a weak solution to the problem \eqref{1.2}.
			\item[{\rm(2)}] Following the related arguments in \cite{Li-2020}, one obtains $(u,u_{t}) \in C([0,T]; V_{2} \times L^{2})$, and an argument similar to that in \cite{Pata-2001} gives $\eta \in C([0,T]; \mathcal{M}_{2})$. Consequently, we conclude
			$U \in C([0,T]; \mathcal{H})$.
		\end{enumerate}
	\end{proof}
	
Proposition \ref{th1} allows us to define an operator family $S^{\alpha}(t): \mathcal{H}\to \mathcal{H}$ associated to problem \eqref{1.2} by
		\begin{align*}
			S^{\alpha}(t): U_{0} \to U(t).
		\end{align*}
		Here, $U(\cdot)$ denotes the weak solution to the problem \eqref{1.2} with initial value $ U_0 \in \mathcal{H}$.
	
	Given two initial values $z^{i}=(u_{0}^{i}, u_{1}^{i}, \eta_{0}^{i})\in \mathcal{H}$, where $i=1,2$, the corresponding two solutions to problem \eqref{1.2} are given by
	\begin{align*}
		S^{\alpha}(t)z^{i}=(u^{i},u_{t}^{i},\eta^{i}),~i=1,2.
	\end{align*}
	Denote $z=S^{\alpha}(t)z^{1}-S^{\alpha}(t)z^{2}=(\widehat{u},\widehat{u}_{t},\widehat{\eta})$, that satisfies
	\begin{equation}\label{5.1}
		\left\{
		\begin{array}{l}
			\widehat{u}_{tt}+kA^{2}\widehat{u}+A^{\alpha}\widehat{u}_{t}+\displaystyle\int_{0}^{\infty}g(s)A^{2}\widehat{\eta} ds+f(u^{1})-f(u^{2})=0,~(x,t)\in \Omega\times R^{+},\\
			\widehat{\eta}_{t}=-\widehat{\eta}_{s}+\widehat{u}_{t},~(x,t,s)\in \Omega\times R^{+}\times R^{+}, \\
			\widehat{u}=\widehat{\eta}=\Delta \widehat{u}=\Delta \widehat{\eta}=0,~ (x,t,s)\in \Gamma\times R^{+}\times R^{+},\\
			(\widehat{u},\widehat{u}_{t},\widehat{\eta})|_{t=0}=	z^{1}-z^{2}, ~x\in \Omega, s\in R^{+}.
		\end{array}
		\right.
	\end{equation}
	
	\begin{theorem}\label{thm1}
		Let the Assumption \ref{ass1} hold and let the growth exponent $p$ (in \eqref{f2}) satisfy
		\begin{equation}\label{assp}
			1\le p<\left\{
			\begin{array}{lll}
				\infty, &n=1,2,3,4, \\
				p^{*}:=\min\left\{\frac{n+4\alpha}{n-4}, \frac{n+4}{n-4}\right\}, & n\ge 5.
			\end{array}
			\right.
		\end{equation}
		Then, the solution $U(t)=S^{\alpha}(t)U_{0}$ is unique. This implies that $\{S^{\alpha}(t)\}_{t\ge 0}$ (for each $\alpha\in [0,2]$) is a semigroup on $\mathcal{H}$.

Moreover, if $\Gamma$ is smooth, the solution has the following estimate:
		\begin{align} \label{esofu}
			\|S^{\alpha}(t)U_{0}\|_{\mathcal{H}^{\alpha}}^{2}\le C,
		\end{align}
		and the mapping $S^{\alpha}(t)$ is continuous from $ \mathcal{H}$ to $\mathcal{H}$, for any $t>0$.  If $\Gamma\in C^{2}$ only, we further assume
		\begin{equation}\label{assp1}
			1\le p<\left\{
			\begin{array}{lll}
				\infty, &n=1,2,3,4, \\[0.28cm]
				p^{**}:=\frac{n+2\alpha}{n-4}, &n\ge 5,
			\end{array}
			\right.
	\end{equation}
		then, the mapping $S^{\alpha}(t)$ is continuous from $ \mathcal{H}$ to $\mathcal{H}$, for any $t>0$.
	\end{theorem}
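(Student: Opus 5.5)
Uniqueness and continuity both come from an energy estimate for the difference $z=(\widehat u,\widehat u_t,\widehat\eta)$ solving \eqref{5.1}. The smooth-boundary bound \eqref{esofu} comes from a higher-order energy estimate built from the functionals of Section 2.1. The difficulty in every case is the nonlinear term $f(u^1)-f(u^2)$.

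\emph{Uniqueness and continuity under \eqref{assp}.} We apply \eqref{P5} with $\sigma_1=0$ and $\mathcal F=f(u^1)-f(u^2)$. This gives
\[
\frac{d}{dt}E_0(\widehat u)+\|\widehat u_t\|_{V_\alpha}^2-\frac12\int_0^\infty g'(s)\|\widehat\eta\|_{V_2}^2ds=-\int_\Omega (f(u^1)-f(u^2))\widehat u_t\,dx .
\]
The right-hand side is bounded by $\|f(u^1)-f(u^2)\|_{V_{-\alpha}}\|\widehat u_t\|_{V_\alpha}$, so we need
\[
\|f(u^1)-f(u^2)\|_{V_{-\alpha}}\le C(R)\|\widehat u\|_{V_2}
\]
on bounded sets, with $R$ the bound from \eqref{1.4}. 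For $n\ge5$ we write $f(u^1)-f(u^2)=\int_0^1 f'(\theta u^1+(1-\theta)u^2)d\theta\,\widehat u$ and use \eqref{f2}. We have $\widehat u\in V_2\subset L^{2n/(n-4)}$, $|u^i|^{p-1}\in L^{2n/((n-4)(p-1))}$, and $V_\alpha\subset L^{2n/(n-2\alpha)}$, so by H\"older and duality the product lies in $L^{(2n/(n-2\alpha))'}$ exactly when
\[
\frac{n-4}{2n}p+\frac{n-2\alpha}{2n}\le 1,
\]
that is, $p\le\frac{n+2\alpha}{n-4}$.

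This gives $p^{**}$, not $p^*$. To reach $p^*=\frac{n+4\alpha}{n-4}$ we will also use the dissipation of the weak solution, namely $u_t\in L^2(0,T;V_\alpha)$. The trick is to move the time derivative off $\widehat u_t$. Write
\[
\int(f(u^1)-f(u^2))\widehat u_t=\frac{d}{dt}\Phi-\int \partial_t[\ldots]\widehat u,
\]
where $\Phi$ is a term like $\int_\Omega\int_0^1 f'(\cdot)d\theta\,\widehat u^2/2$ (compare \eqref{P6}). The time derivative then falls on $u^i_t\in V_\alpha$, together with $f''$ (or a difference-quotient version if $f\in C^1$ only). This is the approach of \cite{Liu-2024}. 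The resulting H\"older count uses $u_t^i\in L^2_tV_\alpha$ once and $\widehat u\in V_2$ twice. The exponents balance precisely at $p<\frac{n+4\alpha}{n-4}$, and the cap $\frac{n+4}{n-4}$ is inherited from (H1). We then absorb $\Phi$ using the subcritical gap (interpolation with a lower norm $\|\widehat u\|_{V_{2-\epsilon}}$, which is weaker). Gronwall gives
\[
E_0(\widehat u(t))\le C(R,T)E_0(\widehat u(0)),
\]
where the memory term carries a good sign because $g'\le0$. This yields uniqueness, so $S^\alpha(t)$ is a semigroup. It also yields Lipschitz continuity on $\mathcal H$ whenever the estimate closes. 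I expect this rearrangement, with control of the boundary-in-time term $\Phi$, to be the main obstacle.

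\emph{Smooth $\Gamma$: estimate \eqref{esofu}.} Here $A^{\alpha}$ commutes with the elliptic regularity at all orders, so we may apply the Galerkin estimates with $\sigma_1=\alpha$, using \eqref{P6} together with $\varepsilon I_{\sigma_2}+\frac{\varepsilon}{2}J_{\sigma_2}$ as in \eqref{P9}. The right-hand side $\int f'(u)u_tA^\alpha u$ is controlled by H\"older, with $u\in V_{2+\alpha}$ and $u_t\in V_{2\alpha}$ after using the dissipation $\|u_t\|_{V_{2\alpha}}^2$. The term $\int f(u)A^\alpha u$ inside $\mathcal E_\alpha$ is bounded using the growth condition $p<p^*$. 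A Gronwall argument then gives \eqref{esofu} on bounded time intervals, where $C$ depends on $\|U_0\|_{\mathcal H}$ and $t$, with smoothing for $t>0$. Because the solutions are this regular, the time-derivative manipulation in the uniqueness step is legitimate, and continuity on $\mathcal H$ follows.

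\emph{$C^2$ boundary.} Now the higher regularity is unavailable, because $V_s$ with $s>2$ no longer matches Sobolev spaces, so we cannot justify moving the time derivative. We fall back on the direct estimate above, $\|f(u^1)-f(u^2)\|_{V_{-\alpha}}\le C\|\widehat u\|_{V_2}$. This is valid precisely for $p<p^{**}$ in \eqref{assp1} (for $n\le4$ any $p$ works by the embedding $V_2\subset L^q$ for all $q<\infty$). Young's inequality and Gronwall then give continuous dependence in $\mathcal H$.
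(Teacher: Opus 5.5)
Your $C^2$ argument is correct and is in substance the paper's item (3): the direct bound $\|f(u^1)-f(u^2)\|_{V_{-\alpha}}\le C\|\widehat u\|_{V_2}$ closes exactly for $p<p^{**}$. The gap is in the step meant to push uniqueness from $p^{**}$ up to $p^{*}$.

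After you move the time derivative off $\widehat u_t$, the remainder is $\int_\Omega f''(\cdot)\,u^i_t\,\widehat u^{\,2}\,dx$ with $|f''|\lesssim 1+|u|^{p-2}$. Using $u^i,\widehat u\in V_2\subset L^{2n/(n-4)}$ and $u^i_t\in V_\alpha\subset L^{2n/(n-2\alpha)}$, the H\"older condition is
\[
\frac{(n-4)(p-2)}{2n}+\frac{n-2\alpha}{2n}+\frac{n-4}{n}\le 1 \iff p\le \frac{n+2\alpha}{n-4}=p^{**},
\]
not $p<\frac{n+4\alpha}{n-4}$. For example, $n=5$, $\alpha=1$ gives $p\le 7$, while $p^*=9$. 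This is structural: integrating by parts in time only swaps one $V_\alpha$ factor ($\widehat u_t$) for another ($u^i_t$) and one $V_2$ factor ($u^i$) for another ($\widehat u$). The regularity budget, and hence the critical exponent, is unchanged, so using only $u_t\in L^2_tV_\alpha$ you cannot beat $p^{**}$. Moreover, (H1) gives only $f\in C^1$, so $f''$ is not available. Consequently, for $p^{**}\le p<p^*$ you have neither uniqueness (which the theorem asserts with no extra boundary smoothness) nor continuity in the smooth case. Only for $\alpha\in\{0,2\}$, where $p^{**}=p^*$, does your direct estimate suffice. The obstacle you flag, controlling $\Phi$, is not the real one: $\Phi$ is harmless for $p<\frac{n+4}{n-4}$.

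The missing idea is the parabolic-type regularization that the fractional damping produces on the \emph{difference}. The paper works with $E_0(\widehat u)+\varepsilon I_\alpha(\widehat u)+\frac{\varepsilon}{2}J_\alpha(\widehat u)$, i.e.\ the extra multiplier $\varepsilon A^\alpha\widehat u$, with $J_\alpha$ cancelling the memory cross term in \eqref{P7} via \eqref{P8}. This puts $k\varepsilon\|\widehat u\|_{V_{2+\alpha}}^2$ on the left. The nonlinearity is then bounded using $\widehat u\in V_{2+\alpha-\varepsilon_0}$, $\widehat u_t\in V_\alpha$ and $u^i\in V_2\subset L^{(p-1)n/(2+2\alpha-\varepsilon_0)}$. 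That last embedding is exactly $p<\frac{n+4\alpha}{n-4}$. The $\varepsilon A^\alpha\widehat u$ part needs $u^i\in L^{(p-1)n/(4-\varepsilon_0)}$, which is where the cap $\frac{n+4}{n-4}$ comes from. Interpolating between $V_{2+\alpha}$ and $L^2$ leaves $C\|\widehat u\|^2$, and Gronwall with zero initial difference gives uniqueness.

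Because $J_\alpha$ (and $I_\alpha$ when $\alpha>1$) is not bounded by $\|z^1-z^2\|_{\mathcal H}$, this functional yields uniqueness only. For smooth $\Gamma$ the paper obtains continuity separately, from an $\mathcal H^{-\alpha}$ estimate of the difference ($\sigma_1=-\alpha$, $\sigma_2=0$), interpolated against \eqref{esofu}. There the test function $A^{-\alpha}\widehat u_t\in V_{2\alpha}$ supplies the extra integrability, and the result is H\"older-$\frac12$ rather than Lipschitz dependence. So what is lost when $\Gamma\in C^2$ only is the $\mathcal H^\alpha$ bound needed for that interpolation, not the legitimacy of integrating by parts in time.

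Finally, \eqref{esofu} cannot come from ``smoothing for $t>0$''. The history variable is transported, so $\eta^t$ is not regularized instantaneously. The paper instead derives \eqref{esofu} from the Galerkin $\mathcal H^\alpha$-energy estimate ($\sigma_1=\alpha$, $\sigma_2=\min\{2\alpha,2\}$) together with a bound on the Galerkin initial data.
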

	\begin{proof}
		\begin{enumerate}
			\item[\rm{(1)}]
			First, we let \eqref{assp} be satisfied and $z_{1}=z_{2}$ in \eqref{5.1}, take $\sigma_{1}=0$ and $\sigma_{2}=\alpha$, substitute $\varphi$ with $\widehat{u}$, and set $\mathcal{F}=f(u^{1})-f(u^{2})$ in \eqref{P5}--\eqref{P8}. Then, by \eqref{f2} and the interpolation inequality, we obtain, for any $0<\varepsilon_0<<1$,
			\begin{align*}
				&\frac{d}{dt}\left(E_{0}(\widehat{u})+\varepsilon I_{\alpha}(\widehat{u})+\frac{\varepsilon}{2}J_{\alpha}(\widehat{u})\right)+(1-\varepsilon)\|\widehat{u}_{t}\|_{V_{\alpha}}^{2}+k\varepsilon\|\widehat{u}\|_{V_{2+\alpha}}^{2}+\frac{\varepsilon}{2}\|\widehat{\eta}\|_{\mathcal{M}_{2+\alpha}}^{2}\nonumber\\
				=&\frac{1}{2}\int_{0}^{\infty}g^{\prime}(s)\|\widehat{\eta}\|_{V_{2}}^{2}ds-\int_{\Omega}\left(f(u^{1})-f(u^{2})\right)\left(\widehat{u}_{t}+\varepsilon A^{\alpha}\widehat{u}\right)dx\nonumber\\
				\le& C\int_{\Omega}\left(1+|u_{1}|^{p-1}+|u_{2}|^{p-1}\right)|\widehat{u}|\left(|\widehat{u}_{t}|+\varepsilon|A^{\alpha}\widehat{u}|\right)dx\nonumber\\
				\le& C\|\widehat{u}_{t}\|_{L^{\frac{2n}{n-2\alpha}}}\|\widehat{u}\|_{L^{\frac{2n}{n-2(2+\alpha-\varepsilon_{0})}}}\left(1+\|u_{1}\|_{L^{\frac{(p-1)n}{2+2\alpha-\varepsilon_{0}}}}^{p-1}+\|u_{2}\|_{L^{\frac{(p-1)n}{2+2\alpha-\varepsilon_{0}}}}^{p-1}\right)\nonumber\\
				&+C\varepsilon\|A^{\alpha}\widehat{u}\|_{L^{\frac{2n}{n-2(2-\alpha)}}}\|\widehat{u}\|_{L^{\frac{2n}{n-2(2+\alpha-\varepsilon_{0})}}}\left(1+\|u_{1}\|_{L^{\frac{(p-1)n}{4-\varepsilon_{0}}}}^{p-1}+\|u_{2}\|_{L^{\frac{(p-1)n}{4-\varepsilon_{0}}}}^{p-1}\right)\nonumber\\
				\le&C  \left(\|\widehat{u}_{t}\|_{V_{\alpha}}+\varepsilon\|\widehat{u}\|_{V_{2+\alpha}}\right)\|\widehat{u}\|_{V_{2+\alpha}}^{\frac{2+\alpha-\varepsilon_{0}}{2+\alpha}}\|\widehat{u}\|^{\frac{\varepsilon_{0}}{2+\alpha}}\nonumber\\
				\le& \frac{1-\varepsilon}{2} \|\widehat{u}_{t}\|_{V_{\alpha}}^{2}+\frac{k\varepsilon}{2}\|\widehat{u}\|_{V_{2+\alpha}}^{2}+C(\varepsilon)\|\widehat{u}\|^{2}.
			\end{align*}
			Thus,
			\begin{align}\label{1.9}
				&\frac{d}{dt}\left(E_{0}(\widehat{u})+\varepsilon I_{\alpha}(\widehat{u})+\frac{\varepsilon}{2}J_{\alpha}(\widehat{u})\right)+\frac{1-\varepsilon}{2}\|\widehat{u}_{t}\|_{V_{\alpha}}^{2}\nonumber\\
				&+\frac{k\varepsilon}{2}\|\widehat{u}\|_{V_{2+\alpha}}^{2}+\frac{\varepsilon}{2}\|\widehat{\eta}\|_{\mathcal{M}_{2+\alpha}}^{2}\le C(\varepsilon)\|\widehat{u}\|^{2}.
			\end{align}
			Taking $\varepsilon$ small enough in \eqref{1.9} and \eqref{P9}, we justify
			the uniqueness of the solution by the Gronwall inequality.
			
\item[\rm{(2)}] When $\Gamma \in C^{\infty}$, the elliptic regularity theory indicates that $\omega_i \in C^{\infty}$. Therefore, for any $0 < \mu_{0} < +\infty$, the following holds:
			\begin{align}\label{d}
				\|(u_m(0), u_{mt}(0), \eta_m(0))\|_{\mathcal{H}^{\mu_{0}}} \le C.
			\end{align}
			For the equation \eqref{1.3}, using \eqref{P5} -- \eqref{P8} and taking $\mathcal{F} = f(u_m) - h(x)$, $\sigma_1 = \alpha$ and $\sigma_2 = \min\{2\alpha, 2\}$,  we obtain, for $0 < \varepsilon_0 \ll 1$,
			\begin{align*}
				& \frac{d}{dt}\left( E_{\alpha}(u_m) + \varepsilon I_{\sigma_2}(u_m) + \frac{\varepsilon}{2} J_{\sigma_2}(u_m)\right) + \|u_{mt}\|_{V_{2\alpha}}^2 - \varepsilon \|u_{mt}\|_{V_{\sigma_2}}^2 \\
				& \quad + k\varepsilon \|u_m\|_{V_{2+\sigma_2}}^2 + \frac{\varepsilon}{2} \|\eta_m\|_{\mathcal{M}_{2+\sigma_2}}^2 \\
				\le & - \int_{\Omega} f(u_m) \left(A^{\alpha} u_{mt} + \varepsilon A^{\sigma_2} u_m\right) dx + \int_{\Omega} h(x) \left(A^{\alpha} u_{mt} + \varepsilon A^{\sigma_2} u_m\right) dx \\
				\le & \frac{1}{4} \|u_{mt}\|_{V_{2\alpha}}^2 + \frac{k\varepsilon}{4} \|u_m\|_{V_{2+\sigma_2}}^2 + C\|h\|^2 \\
				& + C\left(1 + \|u_m\|_{L^{\frac{(p-1)n}{2+\sigma_2-\varepsilon_0}}}^{p-1}\right) \|u_m\|_{L^{\frac{2n}{n-2(2+\sigma_2-\varepsilon_0)}}} \left(\|A^{\alpha} u_{mt}\| + \varepsilon \|A^{\sigma_2} u_m\|\right) \\
				\le & \frac{1}{2} \|u_{mt}\|_{V_{2\alpha}}^2 + \frac{k\varepsilon}{2} \|u_m\|_{V_{2+\sigma_2}}^2 + C(\|h\|^2 + 1).
			\end{align*}
			Therefore, letting $\varepsilon$ sufficiently small, we have
			\begin{align*}
				& \frac{d}{dt}\left( E_{\alpha}(u_m) + \varepsilon I_{\sigma_2}(u_m) + \frac{\varepsilon}{2} J_{\sigma_2}(u_m) \right) + \frac{1}{4} \|u_{mt}\|_{V_{2\alpha}}^2 \\
				&+ \frac{k\varepsilon}{2} \|u_m\|_{V_{2+\sigma_2}}^2 + \frac{\varepsilon}{2} \|\eta_m\|_{\mathcal{M}_{2+\sigma_2}}^2\le C(\|h\|^2 + 1).
			\end{align*}
			Using \eqref{P9} and the Gronwall inequality, along with \eqref{d}, gives
			\begin{align*}
				\|(u_m, u_{mt}, \eta_m)\|_{\mathcal{H}^{\alpha}}^2 \le C,
			\end{align*}
			and by the lower semicontinuity of the norm with respect to weak limits, we verify \eqref{esofu}. Next, for the equation \eqref{5.1}, utilizing \eqref{P5}--\eqref{P8} and choosing $\sigma_1 = -\alpha$ and $\sigma_2 = 0$, we obtain
			\begin{align*}
				&\frac{d}{dt}\left(E_{-\alpha}(\widehat{u}) + \varepsilon I_{0}(\widehat{u}) + \frac{\varepsilon}{2}J_{0}(\widehat{u})\right) + (1-\varepsilon)\|\widehat{u}_t\|^2 + k\varepsilon \|\widehat{u}\|_{V_2}^2 + \frac{\varepsilon}{2}\|\widehat{\eta}\|_{\mathcal{M}_2}^2 \nonumber \\
				\leq &\int_{\Omega} (f(u^1) - f(u^2))(A^{-\alpha} \widehat{u}_t + \varepsilon \widehat{u}) \, dx \nonumber \\
				\leq& C \|\widehat{u}\|_{L^{\frac{2n}{n - 2(2 - \varepsilon_0)}}} \left( \|A^{-\alpha} \widehat{u}_t\|_{L^{\frac{2n}{n - 4\alpha}}} + \varepsilon \|\widehat{u}\|_{L^{\frac{2n}{n - 4}}} \right) \nonumber \\
				&\times \left( 1 + \|u^1\|_{L^{\frac{(p-1)n}{2 + 2\min\{\alpha,1\}- \varepsilon_0}}} + \|u^2\|_{L^{\frac{(p-1)n}{2 + 2\min\{\alpha,1\} - \varepsilon_0}}} \right) \nonumber \\
				\leq& \frac{1 - \varepsilon}{2} \|\widehat{u}_t\|^2 + \frac{k\varepsilon}{2} \|\widehat{u}\|_{V_2}^2 + C \|\widehat{u}\|^2.
			\end{align*}
Hence,		
			\begin{align*}
				\|S^{\alpha}(t)z^{1} - S^{\alpha}(t)z^2\|_{\mathcal{H}^{-\alpha}}^2 \leq C(t) \left( \|z^1 - z^2\|_{\mathcal{H}^{-\alpha}}^2 + \|u_0^1 - u_0^2\|_{V_2}^2 + \|\eta_0^1 - \eta_0^2\|_{\mathcal{M}_2}^2 \right),
			\end{align*}
by applying  Gronwall's inequality and \eqref{P9}. Accordingly,
		utilizing the interpolation inequalities and the estimate \eqref{esofu}, we derive
			\begin{align*}
				\|S^{\alpha}(t)z^1 - S^{\alpha}(t)z^2\|_{\mathcal{H}} \leq & C \|S^{\alpha}(t)z^1 - S^{\alpha}(t)z^2\|_{\mathcal{H}^\alpha}^{\frac{1}{2}} \|S^{\alpha}(t)z^1 - S^{\alpha}(t)z^2\|_{\mathcal{H}^{-\alpha}}^{\frac{1}{2}} \\
				\leq & C(t) \left( \|z^1 - z^2\|_{\mathcal{H}^{-\alpha}}^2 + \|u_0^1 - u_0^2\|_{V_2}^2 + \|\eta_0^1 - \eta_0^2\|_{\mathcal{M}_2}^2 \right)^{\frac{1}{4}} \\
				\leq & C(t) \|z^1 - z^2\|_{\mathcal{H}}^{\frac{1}{2}}.
			\end{align*}
			This shows that the mapping $S^{\alpha}(t)$ is continuous from $\mathcal{H}$ to $\mathcal{H}$.
			
			\item [\rm{(3)}] When $\Gamma \in C^{2}$ only, the elliptic regularity theory indicates $\omega_i \in H^2(\Omega)$, which indicates the boundedness of the solution (only) in $\mathcal{H}$. Therefore, letting \eqref{assp1} hold and
			taking $\sigma_{1}= \sigma_{2}=0$ in \eqref{P5}--\eqref{P8} for the equation \eqref{5.1}, we also obtain
			\begin{align*}
				&\frac{d}{dt}\left(E_{0}(\widehat{u})+\varepsilon I_{0}(\widehat{u})+\frac{\varepsilon}{2}J_{0}(\widehat{u})\right)+\|\widehat{u}_{t}\|_{V_{\alpha}}^{2}-\varepsilon\|\widehat{u}_{t}\|^{2}+k\varepsilon\|\widehat{u}\|_{V_{2}}^{2}+\frac{\varepsilon}{2}\|\widehat{\eta}\|_{\mathcal{M}_{2}}^{2}\nonumber\\
				\le& \int_{\Omega}(f(u^{1})-f(u^{2}))(\widehat{u}_{t}+\varepsilon \widehat{u})dx\nonumber\\
				\le&C\|\widehat{u}\|_{L^{\frac{2n}{n-2(2-\varepsilon_{0})}}}\left(\|\widehat{u}_{t}\|_{L^{\frac{2n}{n-2\alpha}}}+\varepsilon\|\widehat{u}\|_{L^{\frac{2n}{n-4}}}\right)\left(1+\|u^{1}\|_{L^{\frac{(p-1)n}{2+\alpha-\varepsilon_{0}}}}+\|u^{2}\|_{L^{\frac{(p-1)n}{2+\alpha-\varepsilon_{0}}}}\right)\nonumber\\
				\le& \frac{1}{2}\|\widehat{u}_{t}\|_{V_{\alpha}}^{2}+\frac{k\varepsilon}{2}\|\widehat{u}\|_{V_{2}}^{2}+C\|\widehat{u}\|^{2}.
			\end{align*}
			Then, letting $\varepsilon$ small enough, from \eqref{P9} we deduce
			\begin{align*}
				\|S^{\alpha}(t)z^{1}-S^{\alpha}(t)z^{2}\|_{\mathcal{H}}\le C(t)\|z^{1}-z^{2}\|_{\mathcal{H}},
			\end{align*}
			which means the continuity of $S^{\alpha}(t)$ on $\mathcal{H}$, for any $t>0$.
			
		\end{enumerate}
	\end{proof}
	\begin{remark}\label{re2.8}{\rm
	\begin{enumerate}
	\item[\rm(i)] It is easy to see that $p^{**}= p^*$ for the cases of weak damping ($\alpha = 0$) and strong damping ($\alpha = 2$).
		\item[\rm(ii)] When the memory term vanishes, i.e., $g\equiv0$, the continuity of the semigroup $S^{\alpha}(t)$ on $\mathcal{H}$ (for any $t>0$) is still guaranteed in the absence of both conditions \eqref{assp1} and $\Gamma\in C^{\infty}$. Indeed,
one can take $\sigma_1 = 0$ and $\sigma_{2}= \min\left\{\alpha,2-\alpha\right\}$ in item (3) of the above proof.
		\end{enumerate}}
	\end{remark}

	\section{Global attractor}
	\subsection{Existence of absorbing set}
	\begin{lemma}\label{lem4.1}
		Let the Assumption \ref{ass1} hold. Then, there exists a bounded absorbing set $\mathcal{B}_{\alpha} = B_{\mathcal{H}}(R_{0})$ in $\mathcal{H}$ for the semigroup $S^{\alpha}(t)$, where $B_{\mathcal{H}}(R_{0})$ denotes a ball of $\mathcal{H}$ centered at $0$ of radius $R_{0}$ (being independent of $\alpha$).
	\end{lemma}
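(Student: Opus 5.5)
We propose to build a Lyapunov functional from the auxiliary functions of Section 2.1 with $\varphi=u$, $\psi=\eta$, $\mathcal{F}=f(u)-h$ and $\sigma_{1}=\sigma_{2}=0$. Set
\begin{align*}
L(t)=\mathcal{E}_{0}(u)+\varepsilon I_{0}(u)+\frac{\varepsilon}{2}J_{0}(u)+C_{1},
\end{align*}
where $\mathcal{E}_{0}(u)=E_{0}(u)+\int_{\Omega}F(u)dx-\int_{\Omega}hu\,dx$ is the energy. Here we use the primitive $F$ rather than $\int f(u)u$; this is the version appearing in \eqref{P6} with right-hand side $0$, as in the Galerkin energy identity in the proof of Proposition \ref{th1}. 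The constant $C_{1}$ is chosen so that $L$ is equivalent to $\|U\|_{\mathcal{H}}^{2}$ up to constants.

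\textbf{Equivalence of $L$ and the norm.} By \eqref{f3} with $\lambda<k\lambda_{1}$ and Poincaré's inequality $\lambda_{1}\|u\|^{2}\le\|u\|_{V_{2}}^{2}$, we have
\begin{align*}
\int_{\Omega}F(u)\ge-\frac{\lambda}{2\lambda_{1}}\|u\|_{V_2}^2-C.
\end{align*}
Hence $E_{0}+\int F-\int hu\ge c_{0}\|U\|_{\mathcal{H}}^{2}-C$ with $c_{0}=\frac12(1-\lambda/(k\lambda_{1}))$ reduced slightly to absorb the $h$ term. Together with \eqref{P9} for $\varepsilon$ small, this gives the lower bound. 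The upper bound $L\le C(1+\|U\|_{\mathcal{H}}^{2}+\|U\|_{\mathcal{H}}^{p+1})$ follows from \eqref{f2} and the embedding $V_{2}\hookrightarrow L^{p+1}$, which is valid since $p<\frac{n+4}{n-4}$.

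\textbf{Differential inequality.} Combining \eqref{P6} (with the $F$-form, right-hand side zero), \eqref{P7} and \eqref{P8} at $\sigma_{2}=0$, the memory cross terms partly cancel. Since $\varepsilon\cdot 2\cdot\frac12$ from $J$ exactly cancels the $-\varepsilon$ cross term from $I$, we obtain
\begin{align*}
\frac{d}{dt}L+\|u_{t}\|_{V_{\alpha}}^{2}-\varepsilon\|u_{t}\|^{2}+k\varepsilon\|u\|_{V_{2}}^{2}+\frac{\varepsilon}{2}\|\eta\|_{\mathcal{M}_{2}}^{2}-\frac12\int_{0}^{\infty}g'\|\eta\|_{V_{2}}^{2}ds=-\varepsilon\int_{\Omega}(f(u)-h)u\,dx.
\end{align*}
Note that $-g'\ge0$ gives a nonnegative term. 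Using \eqref{f3}, the right-hand side is bounded by
\begin{align*}
\varepsilon\lambda\|u\|^{2}+\varepsilon C+\varepsilon\|h\|\|u\|\le\varepsilon\frac{\lambda}{\lambda_{1}}\|u\|_{V_2}^{2}+\frac{\varepsilon k\delta_{0}}{2}\|u\|_{V_2}^2+C\varepsilon.
\end{align*}
Since $\|u_{t}\|\le C\|u_{t}\|_{V_{\alpha}}$ for $\alpha\ge0$, with the constant independent of $\alpha\in[0,2]$ by Poincaré, we get $\|u_{t}\|_{V_\alpha}^2-\varepsilon\|u_t\|^2\ge\frac12\|u_t\|^2$ for $\varepsilon$ small. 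The term $J_{0}$ is controlled by $\|\eta\|_{\mathcal{M}_2}^2+\|u\|_{V_2}^2$ through $G\le\delta g$ from \eqref{g1}, which is exactly what \eqref{P9} encodes.

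\textbf{Closing the argument.} The remaining difficulty is that $\int F(u)$ inside $L$ is not bounded by the dissipation. We handle it by adding $\varepsilon\int_{\Omega}f(u)u$ back. By \eqref{f1} one can write $f(s)s-\mu F(s)\ge-\lambda' s^{2}-C$ for some $\mu>0$; if this fails for general $C^{1}$ $f$, we instead use the standard route: integrate the energy identity to bound $\int_{t}^{t+1}\|u_t\|^2$, and then apply $I_0$ on unit intervals. Either way we expect to reach
\begin{align*}
\frac{d}{dt}L+\gamma L\le C_{2},
\end{align*}
with $\gamma,C_{2}$ independent of $\alpha$. Gronwall's inequality then gives $L(t)\le L(0)e^{-\gamma t}+C_{2}/\gamma$, and therefore $\|U(t)\|_{\mathcal{H}}^{2}\le R_{0}^{2}$ for $t\ge t_{0}(\|U_0\|_{\mathcal{H}})$. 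Here $R_0$ depends only on $k,\lambda_1,\lambda,\delta,\|h\|$ and the constants in \eqref{f3}, and not on $\alpha$. Weak solutions are handled by performing the estimates on the Galerkin approximations of Proposition \ref{th1} and passing to the limit.

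\textbf{Main obstacle.} We expect the main obstacle to be this treatment of the $\int F(u)$ term so that the decay rate $\gamma$ does not depend on $\alpha$. Relying on the Poincaré step above, rather than on $\alpha$-dependent dissipation, is what makes $R_0$ uniform in $\alpha$.
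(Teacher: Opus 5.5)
Up to the last step your argument is the paper's: the same functional $\mathcal{E}_0+\varepsilon I_0+\frac{\varepsilon}{2}J_0$ in its $F$-form (the paper also drops the $f'(u)u_tu$ term of \eqref{P6}), and the same inequality \eqref{3.1}, $\frac{d}{dt}L+c\|U\|_{\mathcal H}^2\le C_0$, with $c,C_0$ independent of $\alpha\in[0,2]$. Your diagnosis is sharper than the paper's text. The best upper bound is $L\le\Phi(\|U\|_{\mathcal H})$ with $\Phi(r)=C(1+r^2+r^{p+1})$. The paper's \eqref{3.2} instead asserts a quadratic upper bound, which fails for superquadratic $F$, and then runs an exponential Gronwall argument.

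\textbf{The gap.} You never close this step, and your target $\frac{d}{dt}L+\gamma L\le C_2$ is in general unreachable under Assumption \ref{ass1}.
\begin{itemize}
\item Your first route needs $sf(s)-\mu F(s)\ge-\lambda's^2-C$, which (H1) does not imply. Take $f(s)=s|s|^{p-1}\bigl(1+\sin\log|s|\bigr)$ with $p>1$. It is $C^1$, $f(0)=0$, $|f'(s)|\le(2p+1)|s|^{p-1}$ and $sf(s)\ge0$. Yet along $\log|s|=-\frac{\pi}{2}+2\pi m$ one has $sf(s)=0$ while $F(s)=|s|^{p+1}/\bigl((p+1)((p+1)^2+1)\bigr)$.
\item This example also rules out any linear inequality for this $L$. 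Take data $u_1=0$, $\eta_0=0$, $u_0=s_m\chi$, with $\chi\in C_c^\infty(\Omega)$ equal to $1$ off a thin boundary layer. In the identity for $\frac{d}{dt}L$ at $t=0$, the term $\gamma\int_\Omega F(u_0)\sim s_m^{p+1}$ cannot be absorbed by $k\varepsilon\|u_0\|_{V_2}^2+\varepsilon\int_\Omega f(u_0)u_0$. So no inequality $\frac{d}{dt}L+\gamma L\le C_2$ holds in general.
\item Your second route is only sketched. Aimed at a linear recursion, it meets the same obstruction: it again requires dominating $\int F(u)$ by $\int f(u)u$ plus quadratic terms.
\end{itemize}

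\textbf{The missing idea.} Exponential absorption is not needed. Inequality \eqref{3.1} together with the two-sided bound $c_1\|U\|_{\mathcal H}^2-C\le L\le\Phi(\|U\|_{\mathcal H})$ already suffices. Put $R^2=2C_0/c$.
\begin{itemize}
\item While $\|U(t)\|_{\mathcal H}\ge R$ you have $\frac{d}{dt}L\le-C_0$. Since $L\ge-C$, the trajectory enters $B_{\mathcal H}(R)$ within time $(\Phi(\|U_0\|_{\mathcal H})+C)/C_0$.
\item At the entry time $t_0$ one has $L(t_0)\le\Phi(R)$. Afterwards $L$ can never exceed $\Phi(R)$: $L>\Phi(R)$ forces $\|U\|_{\mathcal H}>R$, hence $L$ decreasing.
\item Therefore $\|U(t)\|_{\mathcal H}^2\le(\Phi(R)+C)/c_1=:R_0^2$ for $t\ge t_0(\|U_0\|_{\mathcal H})$. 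Alternatively, integrate $\frac{d}{dt}L+c'(L-C')_+^{2/(p+1)}\le C_0$.
\end{itemize}
Since $c,c_1,C_0$ and $\Phi$ do not depend on $\alpha$, neither does $R_0$. Carrying out the argument on the Galerkin approximations, as you proposed, handles weak solutions. This completes your proof, and it also repairs the Gronwall step in the paper.
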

	\begin{proof}
		Replacing $\varphi$ with $u$, taking $\mathcal{F}=f(u)-h(x)$ and $\sigma_{1}=\sigma_2=0$, and applying \eqref{P7}--\eqref{P6},  from \eqref{f3} we deduce, for any $\varepsilon_{0}>0$,
		\begin{align*}
			& \frac{d}{dt}\left(\mathcal{E}_{0}(u)+\varepsilon I_{0}(u)+\frac{\varepsilon}{2}J_{0}(u)\right)+\|u_{t}\|_{V_{\alpha}}+\frac{\varepsilon}{2}\|\eta\|_{\mathcal{M}_{2}}+k\varepsilon\|u\|_{V_{2}}^{2}-\varepsilon\|u_{t}\|^{2}\nonumber\\
			\le&-\varepsilon\int_{\Omega}(f(u)-h(x))udx\le \left(\lambda+\varepsilon_{0}\right)\varepsilon\|u\|^{2}+C\left(\|h\|^{2}+1\right).
		\end{align*}
		 Taking $\varepsilon, \varepsilon_{0}$ small enough, by the Poincar\'e inequality we get
		\begin{align}\label{3.1}
			\frac{d}{dt}\left(\mathcal{E}_{0}(u)+\varepsilon I_{0}(u)+\frac{\varepsilon}{2}J_{0}(u)\right)+C\varepsilon\|(u,u_{t},\eta)\|_{\mathcal{H}}^{2}\le C\left(1+\|h\|^{2}\right),
		\end{align}
		and
		\begin{align}\label{3.2}
			C\|(u,u_{t},\eta)\|_{\mathcal{H}}^{2}-C\le \mathcal{E}_{0}(u)+\varepsilon I_{0}(u)+\frac{\varepsilon}{2}J_{0}(u) \le C\|(u,u_{t},\eta)\|_{\mathcal{H}}^{2}+C.
		\end{align}
		From \eqref{3.1} and \eqref{3.2}, by the Gronwall inequality we deduce, for $\gamma_{0}, R_{0}>0$,
		\begin{align*}
			\|(u,u_{t}, \eta)\|_{\mathcal{H}}^{2}\le C e^{-\gamma_{0} t}\left( \|(u_{0},u_{1}, \eta_{0})\|_{\mathcal{H}}^{2}+1\right)+C\left(1+\|h\|^{2}\right)\le R_{0}^{2}.
		\end{align*}
		Consequently, this implies the existence of a bounded absorbing set $ \mathcal{B}_{\alpha} = B_{\mathcal{H}}(R_0) \subset \mathcal{H}$ for $S^{\alpha}(t)$.
		
	\end{proof}
	\subsection{Asymptotic compactness}
	
	First, we decompose the solution $$U(t)=S^{\alpha}(t)U_{0}=(u(t), u_{t}(t),\eta
	)~\mbox{with}~ U_{0}=(u_{0}, u_{1},\eta_{0})\in \mathcal{B}_{\alpha}$$ of \eqref{1.2} into the sum
	\begin{align*}
		U(t)=S^{\alpha}(t)U_{0}=S_{1}^{\alpha}(t)U_{0}+S_{2}^{\alpha}(t)U_{0}=\widehat{U}_{1}(t)+\widehat{U}_{2}(t),
	\end{align*}
	where $\widehat{U}_{1}(t)=(\widehat{v},\widehat{v}_{t},\widehat{\xi}), \widehat{U}_{2}(t)=(\widehat{w},\widehat{w}_{t},\widehat{\zeta})$ are the solutions, respectively, to the problems
	\begin{equation}\label{3.3}
		\left\{
		\begin{array}{l}
			\widehat{v}_{tt}+kA^{2}\widehat{v}+A^{\alpha}\widehat{v}_{t}+\displaystyle\int_{0}^{\infty}g(s)A^{2}\widehat{\xi}(x,t,s)ds+f(\widehat{v})+L \widehat{v}=0, (x,t)\in \Omega\times R^{+},\\
			\widehat{\xi}_{t}=-\widehat{\xi}_{s}+\widehat{v}_{t}, (x,t,s)\in \Omega\times R^{+}\times R^{+},\\
			\widehat{v}=\widehat{\xi}=\Delta \widehat{v}=\Delta\widehat{\xi}=0,  (x,t,s)\in \Gamma \times R^{+}\times R^{+},\\
			( \widehat{v}(x,0), \widehat{v}_{t}(x,0),\widehat{\xi}(x,0,s))=(u_{0},u_{1}, \eta_{0}), x\in\Omega, s\in R^{+},
		\end{array}
		\right.
	\end{equation}
	and
	\begin{equation}\label{3.4}
		\left\{
		\begin{array}{l}
			\widehat{w}_{tt}+kA^{2}\widehat{w}+A^{\alpha}\widehat{w}_{t}+\displaystyle\int_{0}^{\infty}g(s)A^{2}\widehat{\zeta}(x,t,s)ds+f(u)-f(\widehat{v})\\
~~=h(x)+L\widehat{v}, (x,t)\in \Omega\times R^{+},\\
			\widehat{\zeta}_{t}=-\widehat{\zeta}_{s}+\widehat{w}_{t}, (x,t,s)\in \Omega\times R^{+}\times R^{+},\\
			\widehat{w}=\widehat{\zeta}=\Delta\widehat{w}=\Delta\widehat{\zeta}=0,  (x,t,s)\in \Gamma \times R^{+}\times R^{+},\\
			(\widehat{w}(x,0),\widehat{w}_{t}(x,0), \widehat{\zeta}(x,0,s))=0, x\in\Omega, s\in R^{+},
		\end{array}
		\right.
	\end{equation}
	where $L>0$ is a constant to be determined. By the Faedo-Galerkin method, one can prove the existence and uniqueness of solutions to the problems \eqref{3.3} and \eqref{3.4}. Next, we give some estimates for solutions of \eqref{3.3} and \eqref{3.4}.
	\begin{lemma}\label{lem4.2}
		Under The Assumption \ref{ass1} and the condition \eqref{assp}, for $U_{0}=(u_{0}, u_{1},\eta_{0})\in \mathcal{B}_{\alpha}$, the solutions of \eqref{3.3}  and \eqref{3.4} satisfy
		\begin{align}\label{3.5}
			\sup_{U_{0}\in \mathcal{B}_{\alpha}, t\in R^{+}}\|S_{1}^{\alpha}(t)U_{0}\|_{\mathcal{H}}^{2}\le C(\|U_{0}\|_{\mathcal{H}})e^{-\gamma_{1}t},
		\end{align}
		for a constant $\gamma_{1}>0$, and
		\begin{align}\label{3.6}
			\sup_{U_{0}\in \mathcal{B}_{\alpha}, t\in R^{+}} \|S_{2}^{\alpha}(t)U_{0}\|_{\mathcal{H}^{\alpha}}^{2}\le C(\|U_{0}\|_{\mathcal{H}}).
		\end{align}
	\end{lemma}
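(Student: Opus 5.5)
For \eqref{3.5} I will run the energy argument of Lemma \ref{lem4.1} on \eqref{3.3}, now with no forcing. I take $\varphi=\widehat v$, $\mathcal F=f(\widehat v)+L\widehat v$ and $\sigma_1=\sigma_2=0$ in \eqref{P5}--\eqref{P8}. The modified energy is
\[
\mathcal E^L(\widehat v)=E_0(\widehat v)+\int_\Omega F(\widehat v)\,dx+\frac L2\|\widehat v\|^2 .
\]
From \eqref{f3} and $\lambda<k\lambda_1$ I get $F(s)+\frac L2 s^2\ge -C$. I also need lower and upper bounds of $\mathcal E^L$ by $\|\widehat U_1\|_{\mathcal H}^2$ without additive constants, because the decay must be pure exponential with no residual. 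Here $f(0)=0$ and \eqref{f2} give $|F(s)|\le C(|s|^2+|s|^{p+1})$, which bounds $\mathcal E^L$ from above by $C(\|U_0\|_{\mathcal H})$ at time $0$.

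For the absorbing lower bound I use the standard trick of choosing $L$ large, together with \eqref{f1}. This gives
\[
f(s)s+Ls^2\ge \tfrac L2 s^2-\lambda s^2\ \ \text{for } |s|\text{ large}, \qquad \text{and } f(s)s+Ls^2\ge 0 \text{ for bounded } s,
\]
since $|f(s)|\le C_0|s|$ there and $L\ge C_0$. So $f(s)s+Ls^2\ge 0$ and $F(s)+\frac L2 s^2\ge 0$ for all $s$ once $L$ is large. Then
\[
\frac{d}{dt}\Big(\mathcal E^L+\varepsilon I_0+\frac\varepsilon2 J_0\Big)+C\varepsilon\|\widehat U_1\|_{\mathcal H}^2\le 0 ,
\]
and the functional is equivalent to $\|\widehat U_1\|_{\mathcal H}^2+\int(F+\frac L2\widehat v^2)$. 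Gronwall gives exponential decay of that functional, hence \eqref{3.5}. The nonnegativity of $F+\frac L2 s^2$ is exactly what removes the constants in \eqref{3.2}.

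For \eqref{3.6} I will write $u=\widehat v+\widehat w$ and treat the right-hand side of \eqref{3.4} as $\mathcal F=f(u)-f(\widehat v)-h-L\widehat v$. I take $\sigma_1=\alpha$ and $\sigma_2=\min\{2\alpha,2\}$ in \eqref{P5}--\eqref{P8}, as in part (2) of the proof of Theorem \ref{thm1}. The terms $h$ and $L\widehat v$ are harmless, since $\widehat v$ is bounded in $V_2$ and $\|A^{\alpha}\widehat w_t\|\le\|\widehat w_t\|_{V_{2\alpha}}$ sits on the dissipative side. The key term is
\[
\int_\Omega (f(u)-f(\widehat v))\,\big(A^\alpha\widehat w_t+\varepsilon A^{\sigma_2}\widehat w\big)\,dx .
\]
I bound it by $C\int(1+|u|^{p-1}+|\widehat v|^{p-1})|\widehat w|\,|A^\alpha\widehat w_t|$ and then apply Hölder: $A^\alpha\widehat w_t\in L^2$, $\widehat w\in V_{2+\sigma_2-\varepsilon_0}\subset L^{2n/(n-2(2+\sigma_2-\varepsilon_0))}$, and $u,\widehat v\in V_2\subset L^{2n/(n-4)}$ are uniformly bounded by Lemma \ref{lem4.1} and \eqref{3.5}. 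Checking the exponents, condition \eqref{assp}, i.e. $p<\frac{n+4\alpha}{n-4}$, is exactly what makes the Hölder triple admissible for small $\varepsilon_0$. Interpolating $\|\widehat w\|_{V_{2+\sigma_2-\varepsilon_0}}$ between $V_{2+\sigma_2}$ and $L^2$ and using Young, I absorb into $\frac14\|\widehat w_t\|_{V_{2\alpha}}^2+\frac{k\varepsilon}4\|\widehat w\|_{V_{2+\sigma_2}}^2$, leaving $C\|\widehat w\|^2+C$.

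Since $\widehat w=u-\widehat v$ is bounded in $L^2$, \eqref{P9} and Gronwall from zero initial data give a bound uniform in $t$, which is \eqref{3.6}.

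The main obstacle is this nonlinear estimate, in particular for $\alpha>1$ where $\sigma_2=2<2\alpha$. In that regime the $\varepsilon A^{2}\widehat w$ term and the exponent bookkeeping need care, and the $I_{\sigma_2}$ cross term $\varepsilon\|\widehat w_t\|_{V_{\sigma_2}}^2$ must be dominated by $\|\widehat w_t\|_{V_{2\alpha}}^2$. For a $C^2$ boundary I would do the estimates on Galerkin approximations, which stay in $V_{2+\sigma_2}$ because the initial data vanish, and pass to the limit by lower semicontinuity.
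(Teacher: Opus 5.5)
Your argument is correct. For \eqref{3.6} it coincides with the paper's proof. The paper multiplies \eqref{3.4} by $A^{\alpha}\widehat w_t+\varepsilon A^{\alpha+\sigma_0}\widehat w$ with $\sigma_0=\min\{\alpha,2-\alpha\}$, which is exactly your choice $\sigma_1=\alpha$, $\sigma_2=\alpha+\sigma_0=\min\{2\alpha,2\}$. It then uses the same H\"older triple and the same requirement $p<\frac{n+2\sigma_2}{n-4}=p^*$ (for $\alpha>1$ the binding constraint is $\frac{n+4}{n-4}$, not $\frac{n+4\alpha}{n-4}$). The interpolation/Young absorption and the Gronwall step from zero data, using that $\|\widehat w\|\le\|u\|+\|\widehat v\|$ is bounded, are also the same.

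The difference is in \eqref{3.5}, in how the nonlinearity is handled.

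\textbf{The paper's route.} It uses an integral estimate: H\"older, $V_2\hookrightarrow L^{\frac{(p-1)n}{4-\varepsilon_0}}$ and interpolation bound both $\int_\Omega f(\widehat v)\widehat v\,dx$ and $\int_\Omega F(\widehat v)\,dx$ in absolute value by $\frac k4\|\widehat v\|_{V_2}^2+C\|\widehat v\|^2$. Taking $L\ge C$ then does two things at once: it makes $-\varepsilon\int_\Omega(f(\widehat v)+L\widehat v)\widehat v\,dx$ absorbable, and it gives a two-sided quadratic equivalence between the functional and $\|\widehat U_1\|_{\mathcal H}^2$. The drawback is that this $C$ carries the factor $\|\widehat v\|_{L^{\frac{(p-1)n}{4-\varepsilon_0}}}^{p-1}$. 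So the choice of $L$ is tied to an a priori $V_2$-bound on $\widehat v$, and that bound itself comes from an energy containing $\frac L2\|u_0\|^2$.

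\textbf{Your route.} You choose $L$ pointwise from \eqref{f1}, $f\in C^1$ and $f(0)=0$, so $L$ depends only on $f$. This gives $f(s)s+Ls^2\ge0$ and $F(s)+\frac L2 s^2\ge 0$ with no such circularity. The uniform $\mathcal H$-bound on $\widehat U_1$ then follows from monotonicity of the functional.

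\textbf{One step to write out.} Closing Gronwall needs the upper bound at every time, not only at $t=0$. From $|F(s)|\le C(s^2+|s|^{p+1})$ and $V_2\hookrightarrow L^{p+1}$ (since $p+1<\frac{2n}{n-4}$ for $n\ge5$; trivial for $n\le4$), together with $\|\widehat v(t)\|_{V_2}\le C(R_0)$, you get
\[
\int_\Omega\Big(F(\widehat v)+\tfrac L2\widehat v^2\Big)dx\le C(R_0)\|\widehat v\|_{V_2}^2 .
\]
Hence the dissipation $C\varepsilon\|\widehat U_1\|_{\mathcal H}^2$ dominates a fixed multiple of the whole functional, and the rate $\gamma_1$ depends only on $R_0$.
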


	\begin{proof}
	Using \eqref{f2} and noticing $V_{2}\hookrightarrow L^{\frac{(p-1)n}{4-\varepsilon_{0}}}$ for $n>4 $ ($1\le p<p^{*}$) and $V_{2}\hookrightarrow L^{\infty}$ for $n=1,2,3,4$, similarly as in the proof of Lemma \ref{lem4.1} we infer, for any $0<\varepsilon_{0}<<1$,
		\begin{align*}
			\int_{\Omega}f(\widehat{v})\widehat{v}dx\le& C\int_{\Omega}\left(1+|\widehat{v}|^{p-1}\right)|\widehat{v}|^{2}dx\nonumber\\
			\le&C\|\widehat{v}\|_{L^{\frac{2n}{n-4}}}\|\widehat{v}\|_{L^{\frac{2n}{n-2(2-\varepsilon_{0})}}}\left(1+\|\widehat{v}\|_{L^{\frac{(p-1)n}{4-\varepsilon_{0}}}}\right)\le\frac{k}{4}\|\widehat{v}\|_{V_{2}}^{2}+C\|\widehat{v}\|^{2},
		\end{align*}
		also
		\begin{align*}
			\int_{\Omega}F(\widehat{v})dx\le& C\int_{\Omega}\left(1+|\widehat{v}|^{p-1}\right)|\widehat{v}|^{2}dx\le\frac{k}{4}\|\widehat{v}\|_{V_{2}}^{2}+C\|\widehat{v}\|^{2}.
		\end{align*}
		Take $L$ large enough and $\varepsilon$ small enough, and note $\mathcal{F}=f(\widehat{v})+L\widehat{v}$. Then the estimate \eqref{3.5} follows.
		
		Multiply the equation in \eqref{3.4} by $A^{\alpha}\widehat{w}_{t}+\varepsilon A^{\alpha+\sigma_{0}}\widehat{w}$ for $\sigma_{0}=\min\{\alpha, 2-\alpha\}$, and integrate over $\Omega$. Replacing $\varphi$ with $\widehat{w}$ in \eqref{P5}--\eqref{P8}, we see
		\begin{align}\label{3.7}
			&\frac{d}{dt}\left(E_{\alpha}(\widehat{w})+\varepsilon I_{\alpha+\sigma_{0}}(\widehat{w})+\frac{\varepsilon}{2}J_{\alpha+\sigma_{0}}(\widehat{w})\right)+\|\widehat{w}_{t}\|_{V_{2\alpha}}^{2}+k\varepsilon\|\widehat{w}\|_{V_{2+\alpha+\sigma_{0}}}^{2}+\frac{\varepsilon}{2}\|\widehat{\zeta}\|_{\mathcal{M}_{2+\alpha+\sigma_{0}}}^{2}\nonumber\\
			=&\varepsilon\|\widehat{w}_{t}\|_{V_{\sigma_{0}+\alpha}}^{2}-\int_{\Omega}\left(f(u)-f(\widehat{v})\right)(A^{\alpha}\widehat{ w}_{t}+\varepsilon A^{\alpha+\sigma_{0}}\widehat{w})dx\nonumber\\
			&+\int_{\Omega}(h(x)+L\widehat{v})(A^{\alpha} \widehat{ w}_{t}+\varepsilon A^{\alpha+\sigma_{0}}\widehat{w})dx.
		\end{align}
		By the H\"older inequality, we have
		\begin{align}\label{3.8}
			&\int_{\Omega}(h(x)+L\widehat{v})(A^{\alpha} \widehat{w}_{t}+\varepsilon A^{\alpha+\sigma_{0}}\widehat{w})dx\nonumber\\
			\le& \frac{1}{4}\|\widehat{w}_{t}\|_{V_{2\alpha}}^{2}+ \frac{k\varepsilon}{4}\|\widehat{w}\|_{V_{2+\alpha+\sigma_{0}}}^{2}+C(\varepsilon)\left(\|h\|^{2}+\|\widehat{v}\|^{2}\right).
		\end{align}
		Also, using \eqref{f2} and \eqref{assp} gives that, for any $0<\varepsilon_{0}<<\alpha$,
		\begin{align}\label{3.9}
			&-\int_{\Omega}(f(u)-f(\widehat{v}))(A^{\alpha} \widehat{w}_{t}+\varepsilon A^{\alpha+\sigma_{0}}\widehat{w})dx\nonumber\\
			\le&C\int_{\Omega}\left(1+|u|^{p-1}+|\widehat{v}|^{p-1}\right)|\widehat{w}|\left(|A^{\alpha}\widehat{w}_{t}|+\varepsilon |A^{\alpha+\sigma_{0}}\widehat{w}|\right)dx\nonumber\\
			\le& C\|\widehat{w}\|_{L^{\frac{2n}{n-2(2+\alpha+\sigma_{0}-\varepsilon_{0})}}}\left(\|A^{\alpha}\widehat{w}_{t}\|+\varepsilon \|A^{\alpha+\sigma_{0}}\widehat{w}\|\right)\left(1+\|u\|_{L^{\frac{(p-1)n}{2+\alpha+\sigma_{0}-\varepsilon_{0}}}}^{p-1}+\|\widehat{v}\|_{L^{\frac{(p-1)n}{2+\alpha+\sigma_{0}-\varepsilon_{0}}}}^{p-1}\right)\nonumber\\
			\le& \frac{1}{4}\|\widehat{w}_{t}\|_{V_{2\alpha}}^{2}+\frac{k\varepsilon}{4}\|\widehat{w}\|_{V_{2+\alpha+\sigma_{0}}}^{2}+C(\varepsilon)
\|\widehat{w}\|^{2},
		\end{align}
by virtue of the interpolation theorem. Inserting \eqref{3.8} and \eqref{3.9} into \eqref{3.7}, using \eqref{P9}, taking $\varepsilon$ small enough, and then employing the Gronwall inequality, we verify \eqref{3.6}.
	\end{proof}
	The estimate \eqref{3.6} guarantees that $S^{\alpha}(t)\mathcal{B}_{\alpha}$ is bounded in $\mathcal{H}^{\alpha}$, however, the embedding $\mathcal{H}^{\alpha} \subset \mathcal{H}$ is not compact (cf. \cite{Pata-2001}). To address this, we turn to the following criterion for compactness of memory terms.
	\begin{lemma}[\cite{Pata-2001}, Lemma 5.5]\label{lemq}
		Let $g\in C(R^{+})\cap L^{1}(R^{+})$ be a non-negative function, such that $g(s)=0$ whenever $g(s_{0})=0$ and $s> s_{0}$, for some $s_{0}\in R^{+}$.
		Let $X_{0}, X, X_{1}$ be three Hilbert spaces such that
		\begin{align*}
			X_{0}\hookrightarrow X\hookrightarrow X_{1},
		\end{align*}
		the first injection being compact. Let $\mathcal{C} \subset L_{g}^{2}(R^{+}, X) $ satisfy the following hypotheses:
		
		\begin{enumerate}
			\item[\rm (i)] $\mathcal{C}$ is bounded in $L_{g}^{2}(R^{+}, X_{0})\cap H_{g}^{1}(R^{+}, X_{1})$;
			\item[\rm(ii)]$\sup_{\eta\in \mathcal{C}}\|\eta(s)\|_{X}^{2}\le K(s)$, for some $K\in L_{g}^{1}(R^{+})$.
		\end{enumerate}
		Then $\mathcal{C}$ is relatively compact in $ L_{g}^{2}(R^{+}, X)$.
	\end{lemma}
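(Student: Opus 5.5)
The plan is to prove relative compactness of $\mathcal{C}$ in $L_{g}^{2}(R^{+},X)$ by showing that every sequence $\{\eta_{m}\}\subset\mathcal{C}$ has a Cauchy subsequence. The idea is to cut the half-line into a compact interval, on which the weight $g$ is bounded above and below so that the classical Aubin--Lions lemma applies, and a remainder, whose contribution is made uniformly small by hypothesis (ii). Let $s_{\infty}:=\sup\{s: g(s)>0\}\in(0,\infty]$. By the assumption on the zeros of $g$, we have $g>0$ on $(0,s_{\infty})$ and $g\equiv0$ on $(s_{\infty},\infty)$. The part of $R^{+}$ where $g$ vanishes contributes nothing to any $L^{2}_{g}$ norm, so we work only on $(0,s_{\infty})$. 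This is exactly where the hypothesis on the zeros of $g$ is used: it rules out interior zeros of $g$, which would spoil the lower bound on compact subintervals.

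\textbf{Step 1: uniform smallness of the remainder.} Choose sequences $a_{N}\downarrow0$ and $b_{N}\uparrow s_{\infty}$ with $a_{N}<b_{N}$. Set $I_{N}=[a_{N},b_{N}]$ and $R_{N}=(0,s_{\infty})\setminus I_{N}$. By (ii), for every $\eta,\zeta\in\mathcal{C}$,
\begin{align*}
\int_{R_{N}}g(s)\|\eta(s)-\zeta(s)\|_{X}^{2}ds\le 4\int_{R_{N}}g(s)K(s)ds.
\end{align*}
Since $gK\in L^{1}(R^{+})$ and $R_{N}$ decreases to a null set, dominated convergence makes the right-hand side smaller than any prescribed $\epsilon$ once $N$ is large. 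This bound is uniform over $\mathcal{C}$.

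\textbf{Step 2: compactness on $I_{N}$.} The function $g$ is continuous and strictly positive on the compact interval $I_{N}$, so $0<c_{N}\le g\le C_{N}$ there. Hypothesis (i) then says that the restrictions $\{\eta|_{I_{N}}:\eta\in\mathcal{C}\}$ are bounded in $L^{2}(I_{N};X_{0})\cap H^{1}(I_{N};X_{1})$, with the unweighted norms controlled by $c_{N}^{-1}$ times the weighted ones. The embedding $X_{0}\hookrightarrow X$ is compact and $X\hookrightarrow X_{1}$ is continuous, so the Aubin--Lions lemma gives that this family is relatively compact in $L^{2}(I_{N};X)$. Because $g\le C_{N}$ on $I_{N}$, it is also relatively compact in $L^{2}_{g}(I_{N};X)$.

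\textbf{Step 3: diagonal extraction and conclusion.} Starting from $\{\eta_{m}\}$, first extract a subsequence converging in $L^{2}_{g}(I_{1};X)$. From it extract a further subsequence converging in $L^{2}_{g}(I_{2};X)$, and so on. The diagonal subsequence $\{\eta_{m_{j}}\}$ then converges in $L^{2}_{g}(I_{N};X)$ for every $N$. Given $\epsilon>0$, pick $N$ by Step 1 so that the remainder contribution is below $\epsilon/2$ for all pairs. Then pick $j_{0}$ so that $\int_{I_{N}}g\|\eta_{m_{j}}-\eta_{m_{i}}\|_{X}^{2}ds<\epsilon/2$ for all $i,j\ge j_{0}$. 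Adding the two pieces, $\|\eta_{m_{j}}-\eta_{m_{i}}\|_{L^{2}_{g}(R^{+},X)}^{2}<\epsilon$ for $i,j\ge j_{0}$, so the subsequence is Cauchy. Since $L^{2}_{g}(R^{+},X)$ is complete, it converges, and $\mathcal{C}$ is relatively compact.

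The step needing the most care is Step 2. One must check that weighted $H^{1}_{g}$ boundedness really transfers to unweighted $H^{1}(I_{N};X_{1})$ boundedness, which requires the positive lower bound $c_{N}$ and hence needs $I_{N}$ to stay strictly inside $(0,s_{\infty})$. One must also note that the constants depend on $N$, which is harmless because $N$ is fixed before the extraction in Step 3.
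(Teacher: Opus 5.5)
The paper gives no proof of this lemma; it quotes it from \cite{Pata-2001}. Your argument is correct and is essentially the standard proof of that cited result: Aubin--Lions on compact subintervals $[a_N,b_N]\subset(0,s_\infty)$ where $g$ is bounded above and below, a diagonal extraction, and uniform control of the rest of the half-line via (ii) and dominated convergence.
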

	We are now ready to prove the existence of attractor.
	\begin{theorem}\label{them3.3}
		Let The Assumption \ref{ass1} hold and assume \eqref{assp1}, or \eqref{assp} if $\Gamma \in C^{\infty}.$		
		Then,  for $\alpha\in(0,2\rbrack$, the semigroup $S^{\alpha}(t)$ on $\mathcal{H}$ possesses a global attractor $\mathcal{A}_{\alpha}$, and $\mathcal{A}_{\alpha}=\omega(\mathcal{B}_{\alpha})$ is contained and  uniformly bounded in $\mathcal{H}^{\alpha}$.
	\end{theorem}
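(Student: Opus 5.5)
The plan is the standard route for dissipative semigroups. By Theorem \ref{thm1}, $S^{\alpha}(t)$ is a continuous semigroup on $\mathcal{H}$ under \eqref{assp1}, or under \eqref{assp} when $\Gamma\in C^\infty$. By Lemma \ref{lem4.1}, it has the bounded absorbing set $\mathcal{B}_{\alpha}$. So it suffices to prove asymptotic compactness. Then $\mathcal{A}_{\alpha}=\omega(\mathcal{B}_{\alpha})$ is the global attractor, and it remains to show it is bounded in $\mathcal{H}^{\alpha}$.

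For asymptotic compactness I use the splitting $S^{\alpha}(t)=S_{1}^{\alpha}(t)+S_{2}^{\alpha}(t)$ from \eqref{3.3}--\eqref{3.4}. By \eqref{3.5}, $S_1^{\alpha}(t)\mathcal{B}_{\alpha}$ decays uniformly to $0$ in $\mathcal{H}$, so it is enough to show that $\mathcal{K}:=\bigcup_{t\ge T}S_{2}^{\alpha}(t)\mathcal{B}_{\alpha}$ is relatively compact in $\mathcal{H}$ for some $T$.

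The first two components are handled by \eqref{3.6}. Since $(\widehat w,\widehat w_t)$ is bounded in $V_{2+\alpha}\times V_{\alpha}$ and $\alpha>0$, the compact embedding $V_{2+\alpha}\times V_{\alpha}\hookrightarrow V_2\times V_0$ settles them. The memory component $\widehat\zeta$ is the main obstacle, since $\mathcal{M}_{2+\alpha}\hookrightarrow\mathcal{M}_2$ is not compact. Here I apply Lemma \ref{lemq} with $X_0=V_{2+\alpha}$, $X=V_2$, $X_1=V_{0}$ (or any weaker space). The hypothesis on $g$ in that lemma follows from (H2), since $g$ is decreasing.

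Hypothesis (i) of Lemma \ref{lemq} needs a bound on $\widehat\zeta$ in $L^2_g(R^+,V_{2+\alpha})$, which is \eqref{3.6}, and a bound on $\partial_s\widehat\zeta$ in $L^2_g(R^+,V_0)$. Because $\widehat\zeta(0)=0$, I use the explicit representation
\[
\widehat\zeta^t(s)=\widehat w(t)-\widehat w(t-s) \mbox{ for } s<t, \qquad \widehat\zeta^t(s)=\widehat w(t) \mbox{ for } s\ge t,
\]
with the convention $\widehat w(\tau)=0$ for $\tau\le 0$. This gives $\partial_s\widehat\zeta^t(s)=\widehat w_t(t-s)\mathbbm{1}_{s<t}$, which is bounded in $V_\alpha\subset V_0$ uniformly by \eqref{3.6}, and so it is square-integrable against $g$. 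The same representation gives $\|\widehat\zeta^t(s)\|_{V_2}\le 2\sup_\tau\|\widehat w(\tau)\|_{V_2}\le C$. Thus (ii) holds with $K$ constant, and $K\in L^1_g$ because $g\in L^1$.

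If $g$ is not integrable near $0$ issues arise, but $\int g=1-k<1$, so it is fine. The lemma then yields relative compactness of the $\widehat\zeta$-components in $\mathcal{M}_2$. Hence $\mathcal{K}$ is precompact, and the standard theorem (a dissipative, asymptotically compact continuous semigroup has a global attractor) gives $\mathcal{A}_{\alpha}=\omega(\mathcal{B}_{\alpha})$.

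Finally, for the bound in $\mathcal{H}^\alpha$, take $z\in\mathcal{A}_{\alpha}$. By invariance, for every $t$ we can write $z=S^\alpha(t)z_t$ with $z_t\in\mathcal{A}_{\alpha}\subset\mathcal{B}_{\alpha}$. Then $z=S_1^\alpha(t)z_t+S_2^\alpha(t)z_t$, where the first term tends to $0$ in $\mathcal{H}$ as $t\to\infty$ by \eqref{3.5}. The second term lies in a ball of $\mathcal{H}^\alpha$ of radius independent of $t$, by \eqref{3.6}. Since that ball is closed in $\mathcal{H}$ (weak lower semicontinuity of the $\mathcal{H}^{\alpha}$-norm), it follows that $z$ lies in it. This gives $\mathcal{A}_\alpha\subset\mathcal{H}^\alpha$ with a uniform bound.
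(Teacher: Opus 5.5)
Your proposal is correct and follows essentially the same route as the paper: the splitting \eqref{3.3}--\eqref{3.4} with the bounds \eqref{3.5}--\eqref{3.6}, the explicit representation of $\widehat\zeta$ (using $\widehat w(0)=0$) to check the hypotheses of Lemma \ref{lemq}, and Temam's criterion for the global attractor $\omega(\mathcal{B}_\alpha)$. The differences are cosmetic. You take $X_1=V_0$ where the paper takes $V_\alpha$. You also get the $\mathcal{H}^\alpha$-bound directly from invariance and the closedness in $\mathcal{H}$ of $\mathcal{H}^\alpha$-balls, while the paper observes that the closed attracting set $\mathcal{K}_\alpha$ must contain $\mathcal{A}_\alpha$, which is the same argument.
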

	\begin{proof}
		We set
		\begin{align*}
			\mathcal{C} =\bigcup_{U_{0}\in \mathcal{B}_{\alpha}}\widehat{\zeta} \subset \mathcal{M}_2.
		\end{align*}
		From \eqref{3.4}, we have
		\begin{equation*}
			\widehat{\zeta}(\cdot,t,s)=\left\{
			\begin{array}{lll}
				\widehat{w}(\cdot,t)-\widehat{w}(\cdot,t-s),&t\ge s\ge 0,\\[0.28cm]
					\widehat{w}(\cdot,t),&s>t.
			\end{array}
			\right.
		\end{equation*}
		This shows
		\begin{align*}
			\|\widehat{\zeta}_{s}\|_{\mathcal{M}_{\alpha}}^{2}=&\int_{0}^{t}g(s)\|\partial_{s}\widehat{w}(t-s)\|_{\alpha}^{2}ds
			\le\int_{0}^{t}g(s)ds\sup_{0\le s\le t}\|\partial_{s}\widehat{w}(s)\|_{\alpha}^{2}<\infty.
		\end{align*}
		 It follows from \eqref{3.6} that
		$\mathcal{C} \subset L^2_g(R^+; V_{2+\alpha}) \cap H_g^1(R^+; V_{\alpha})$. On the other hand, we have
		\begin{align*}
			\sup_{\widehat{\zeta}\in \mathcal{C}} \|\Delta\widehat{\zeta}\|^{2}\le \sup_{0\le s\le t}\|\Delta \widehat{w}(t)-\Delta \widehat{w}(t-s)\|^{2}+ \sup_{s\ge t}\|\Delta \widehat{w}(t)\|^{2}\le 3\sup_{t\in R^{+}}\|\Delta \widehat{w}\|^{2}<\infty.
		\end{align*}
		Utilizing Lemma \ref{lemq} with the embedding relation $V_{2+\alpha} \hookrightarrow V_2 \hookrightarrow V_\alpha$, we deduce that $\mathcal{C}$ is relatively compact in $\mathcal{M}_2$. Let $\overline{\mathcal{C}}$ denote the closure of $\mathcal{C}$ in $\mathcal{M}_2$. Also, denote by $B_{\mathcal{R}_0}$ the closed ball of $V_{2+\alpha} \times V_\alpha$ centered at the origin with radius $\mathcal{R}_0$, satisfying
		\begin{align*}
			\sup_{U_{0}\in \mathcal{B}_{\alpha}}\|(\widehat{w},\widehat{w}_{t})\|_{V_{2+\alpha}\times V_{\alpha}}\le \mathcal{R}_{0},
		\end{align*}
		and write
		\begin{align}\label{K}
			\mathcal{K}_{\alpha}=B_{\mathcal{R}_{0}}\times \overline{\mathcal{C}}\subset \mathcal{H}^{\alpha}.
		\end{align}
 Clearly, $B_{\mathcal{R}_0}$ is compact subset in $V_2 \times V_0$, because the embedding $V_{2+\alpha} \times V_\alpha \hookrightarrow V_2 \times V_0$ is compact. This means that $\mathcal{K}_{\alpha}$ is compact in $\mathcal{H}$, and hence
the operators $ S_2^\alpha(t)$ are uniformly compact for $t$ large. Thus applying \cite[Theorem I.1.1]{Temam-1988}, we conclude that $\mathcal{A}_{\alpha}:=\omega(\mathcal{B}_{\alpha})$ is a  global attractor.
		
		Furthermore, for any bounded subset $B \subset \mathcal{H}$, there exists $t_B > 0$ such that for all $t \ge t_B$, we have $S^\alpha(t) B \subset \mathcal{B}_\alpha$. For $t > t_B$ and $t_0 = t - t_B$, using the semigroup property we obtain
		\begin{align}\label{4.350}
			S^{\alpha}(t)B=S^{\alpha}(t_{B}+t_{0})B\subset S^{\alpha}(t_{0})S^{\alpha}(t_{B})B\subset S^{\alpha}(t_{0})\mathcal{B}_{\alpha}.
		\end{align}
		Consequently, for any $S^\alpha(t) U_0 = (u, u_t, \eta) \in S^\alpha(t) B$ with $t > t_B$, it follows from \eqref{3.6} and \eqref{4.350} that the solution $S_2^\alpha(t) U_0$ to the problem \eqref{3.4} belongs to $ \mathcal{K}_{\alpha}$. Therefore,
		\begin{align*}
			\inf_{z \in \mathcal{K}_{\alpha}} \| S^\alpha(t) U_0 - z \|_{\mathcal{H}} \le \inf_{z \in \mathcal{K}_{\alpha}} \| S_2^\alpha(t) U_0 - z \|_{\mathcal{H}} + \| S_1^\alpha(t) U_0 \|_{\mathcal{H}} = \| S_1^\alpha(t) U_0 \|_{\mathcal{H}},
		\end{align*}
		for $t > t_B$. Using \eqref{3.5}, we obtain
		\begin{align*}
			\text{dist}_{\mathcal{H}}(S^\alpha(t) B, \mathcal{K}_{\alpha}) \le \sup_{U_0 \in \mathcal{B}_\alpha} \| S_1^\alpha(t) U_0 \|_{\mathcal{H}} \le C(\| U_0 \|_{\mathcal{H}}) e^{-\frac{\gamma_1}{2} t}, \quad t > t_B.
		\end{align*}
		Since $\mathcal{A}_{\alpha}$ is fully invariant, it is contained in every closed attracting set, that is,  $\mathcal{A}_{\alpha} \subset \mathcal{K}_{\alpha}$. Therefore, $\mathcal{K}_{\alpha}$ and hence $\mathcal{A}_{\alpha}$ are bounded in $\mathcal{H}^{\alpha}$, and the proof is complete.
	\end{proof}
	\begin{remark}{\rm For the case of weak damping (\(\alpha = 0\)), we cannot obtain the compact embedding result here. The proof of the  existence of attractor will be given in the next section, by the use of quasi-stability.}

	\end{remark}
	\section{Regularity of the attractor}
		
	To facilitate the presentation of the subsequent results, we first introduce the requisite definitions and foundational theoretical concepts.
	The following is the definition of quasi-stability.
	\begin{definition} [quasi-stability]\label{def4.2}
		Let $X, Y, Z$ be reflexive Banach spaces, with $X$ compactly embedded in $Y$. Let H be the product space $X \times Y \times Z$. Assume that $(H, S(t))$ is a dynamical system on $H$ taking the form
		\begin{equation*}
			S(t)U=(u(t),u_{t}(t), \eta),~ \forall t\ge0, U\in H,
		\end{equation*}
		where
		\begin{equation*}
			u\in C(R^{+},X)\cap C^{1}(R^{+},Y), ~\eta\in C(R^{+},Z).
		\end{equation*}
		Dynamical system $(H,S(t))$ is said to be (asymptotically) quasi-stable on a set $B\subset H$ if there exist a compact semi-norm $n_{X}(\cdot)$ on the space $X$, non-negative scalar functions $a(t), c(t) \in L_{loc}^{\infty}(R^{+})$, and $b(t)\in L^{1}(R^{+})$ with $\lim_{t\to\infty} b(t)=0$ such that, for every $y_{1}, y_{2}\in B$ and $t>0$,
		\begin{equation*}
			\|S(t)y_{1}-S(t)y_{2}\|_{H}^{2}\le a(t)\|y_{1}-y_{2}\|_{H}^{2},
		\end{equation*}
		and
		\begin{equation}\label{quasi-stable}
			\|S(t)y_{1}-S(t)y_{2}\|_{H}^{2}\le b(t)\|y_{1}-y_{2}\|_{H}^{2}+c(t)\sup_{0\le s\le t}\left[n_{X}\left(u^{1}(s)-u^{2}(s)\right)\right]^{2}.
		\end{equation}
		Here $S(t)y_{i}:=(u^{i}(t), u_{t}^{i}(t), \eta^{i}(t)),~ i=1,2$.
	\end{definition}
	The next lemma, concerning the regularity of time derivatives for any full trajectory in the attractor, can be proved
similarly as in the proof of \cite[Theorem 4.17]{Chueshov-2013}.	

\begin{lemma}\label{lemma4.3}
		Let the assumptions in Definition \ref{def4.2} be satisfied. Assume that the dynamical system $(H, S(t))$ has a global attractor $\mathcal{A}$ and is quasi-stable on $\mathcal{A}$.
		 Let \(b(t)\) and \(c(t)\) be the functions as defined in Definition \ref{def4.2}, and suppose that $c_{\infty} = \sup_{t \in \mathbb{R}^+} c(t) < \infty$. For a constant $\theta_0 \geq 0$, if
			\begin{align}\label{qs}
				\|S(t)y_1 - S(t)y_2\|_H^2 \leq Cb(t) + c(t) \sup_{0 \leq s \leq t} \left[n_X(u^1(s) - u^2(s))\right]^{\theta_0},
			\end{align}
			then any full trajectory $\{(u, u_t, \eta) : t \in \mathbb{R}\}$ belonging to the global attractor satisfies
			\begin{align*}
				u_{t}\in L^{\infty}(R;X)\cap C(R;Y),~u_{tt}\in L^{\infty}(R;Y),~\eta_{t}\in L^{\infty}(R;Z);
			\end{align*}
			furthermore, there exists a constant \(r > 0\) such that
			\begin{align*}
				\|u_t\|_X^2 + \|u_{tt}\|_Y^2 + \|\eta_t\|_Z^2 \leq r^2, \quad \text{for all } t \in R,
			\end{align*}
			where \(r\) depends on the constant $c_{\infty}$.
		\end{lemma}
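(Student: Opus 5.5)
The plan is to follow the scheme of \cite[Theorem 4.17]{Chueshov-2013}: take a full trajectory in the attractor, compare it with its own time shift, apply the generalized quasi-stability inequality \eqref{qs} to the pair, and let the shift go to zero. Let $\Gamma=\{U(t)=(u(t),u_t(t),\eta(t)):t\in R\}\subset\mathcal{A}$ be a full trajectory. For $h\in(0,1]$ and $s\in R$ put $y_1=U(s)$ and $y_2=U(s+h)$. Both belong to $\mathcal{A}$, and $S(t)y_1=U(s+t)$, $S(t)y_2=U(s+t+h)$. Then \eqref{qs} with initial time $s=\tau-t$ gives
\begin{align*}
\|U(\tau+h)-U(\tau)\|_H^2\le Cb(t)+c(t)\sup_{\tau-t\le \sigma\le \tau}\left[n_X(u(\sigma+h)-u(\sigma))\right]^{\theta_0}.
\end{align*}
I would divide by $h^2$ and estimate the seminorm term in a way that keeps the $h^2$ scaling. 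The compact seminorm satisfies $n_X(v)\le C\|v\|_Y$ after replacing it by an equivalent one, and $u\in C^1(R;Y)$ with $\mathcal{A}$ bounded. Hence $n_X(u(\sigma+h)-u(\sigma))\le C h\sup_\sigma\|u_t\|_Y\le Ch$.

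The main obstacle is the homogeneity mismatch. The right-hand side contains $Cb(t)$, which does not scale with $h$, and the power $\theta_0$ may differ from $2$. I would not work with the crude form of \eqref{qs} literally. Instead, I would first use the genuine quasi-stability inequality \eqref{quasi-stable}, where $b(t)$ multiplies $\|y_1-y_2\|_H^2$. Setting
\begin{align*}
\Psi_h:=\sup_{\tau\in R}h^{-2}\|U(\tau+h)-U(\tau)\|_H^2,
\end{align*}
which is finite for each fixed $h$ because $\mathcal{A}$ is bounded, we obtain
\begin{align*}
\Psi_h\le b(t)\Psi_h+c_\infty \sup_\sigma h^{-2}n_X(u(\sigma+h)-u(\sigma))^2\le b(t)\Psi_h+C c_\infty .
\end{align*}
Choosing $t$ large so that $b(t)\le\frac12$ (possible since $b(t)\to0$) yields $\Psi_h\le 2Cc_\infty$, uniformly in $h$. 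The version \eqref{qs} with exponent $\theta_0$ would be used in the same way when $\theta_0\ge2$, because $h^{\theta_0-2}\le1$. For the additive term, I would interpret $Cb(t)$ as $b(t)\|y_1-y_2\|^2$ after the normalization above. If that interpretation is not available, I would iterate \eqref{qs} along the trajectory, as in Chueshov's proof, to absorb the term.

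Once $\Psi_h\le r^2$ uniformly in $h$, the difference quotients $h^{-1}(U(\tau+h)-U(\tau))$ are bounded in $H=X\times Y\times Z$. Reflexivity of $X$, $Y$ and $Z$ lets us extract weakly convergent subsequences as $h\to0$. The limits identify with $(u_t,u_{tt},\eta_t)$ in the distributional sense, using $u\in C^1(R;Y)$ and $\eta\in C(R;Z)$. Weak lower semicontinuity of the norms then gives $\|u_t\|_X^2+\|u_{tt}\|_Y^2+\|\eta_t\|_Z^2\le r^2$ for every $t\in R$, with $r$ depending on $c_\infty$ and on the bound of $\mathcal{A}$. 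The continuity $u_t\in C(R;Y)$ follows from $u_{tt}\in L^\infty(R;Y)$.
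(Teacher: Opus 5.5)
Your scheme (compare the trajectory with its time shift, take the supremum $\Psi_h$ of squared difference quotients over all $\tau$, absorb using $b(t)\le\frac12$, pass to weak limits by reflexivity) is the Chueshov--Lasiecka argument the paper cites. There is, however, a genuine gap at the seminorm step. The claim that $n_X(v)\le C\|v\|_Y$ "after replacing it by an equivalent one" is not available in general. A compact seminorm on $X$ need not be dominated by $\|\cdot\|_Y$: take $n_X=\|\cdot\|_{V_1}$ with $X=V_2$, $Y=V_0$. Moreover, you may only \emph{enlarge} $n_X$ (the quasi-stability inequality survives that), never shrink it.

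The missing ingredient is the compactness inequality: for every $\varepsilon>0$ there is $C_\varepsilon$ with $n_X(v)\le\varepsilon\|v\|_X+C_\varepsilon\|v\|_Y$. You prove it by contradiction, using that $n_X$ is bounded on bounded sets, the reflexivity of $X$, the compact embedding $X\hookrightarrow Y$, and the weak lower semicontinuity of $n_X$. Since $\|u(\sigma+h)-u(\sigma)\|_X^2\le C h^2\Psi_h$, this gives
\[
\Psi_h\le\bigl(b(t)+2Cc_\infty\varepsilon^2\bigr)\Psi_h+2c_\infty C_\varepsilon^2\sup_{\sigma\in R}\|u_t(\sigma)\|_Y^2 .
\]
You then absorb the first term by taking $\varepsilon$ small and $t$ large. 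This is exactly where the finiteness of $\Psi_h$ and the supremum over all $\tau$ are needed. The resulting $r$ also depends on $n_X$ through $C_\varepsilon$. In this paper's applications $n_X$ is the $L^2$-norm and $Y\hookrightarrow L^2$, so your shortcut happens to hold there, but the lemma is stated for an arbitrary compact seminorm.

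Your treatment of \eqref{qs} misidentifies the difficulty. The additive term $Cb(t)$ is harmless: for a full trajectory you may start at $\tau-t$ and let $t\to\infty$, which removes it. No reinterpretation as $b(t)\|y_1-y_2\|_H^2$ and no iteration is needed, and Chueshov's proof does not iterate; it sends the initial time to $-\infty$.

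The real obstruction is the exponent. For $\theta_0<2$, after that limit you only get $\|U(\tau+h)-U(\tau)\|_H^2\le Cc_\infty|h|^{\theta_0}$. That is H\"older continuity of order $\theta_0/2$, and difference quotients stay unbounded. For $\theta_0=0$, inequality \eqref{qs} is satisfied by any bounded invariant set. Consequently your proof rests entirely on \eqref{quasi-stable} with $\sup_t c(t)<\infty$, which is among the hypotheses. Once the seminorm step is repaired, this proves the lemma as literally stated, but \eqref{qs} plays no role. Note also that the later use of the lemma, with only an inequality of type \eqref{qs} with $\theta_0=\mu_1\ll1$ for the $w$-components, is not covered by this argument.
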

	\begin{remark}\label{re4.4}{\rm
As shown in \cite[Theorem 4.17]{Chueshov-2013}, the quasi-stability inequality \eqref{quasi-stable} (under the condition \(\sup_{t \in \mathbb{R}^+} c(t) < \infty\)) ensures the regularity of the time derivatives, that was applied in \cite{Miranda-2025} among others. In the following, we will exploit the more general inequality \eqref{qs} (in place of \eqref{quasi-stable}) to deal with the regularity problem for the trajectories.}
	\end{remark}
	\subsection{Quasi-stability}
	
	We commence by establishing a lemma to demonstrate that the system $(\mathcal{H}, S^{\alpha}(t))$ generated by problem \eqref{1.2} exhibits quasi-stability on the attractor $\mathcal{A}_{\alpha}$.
	\begin{lemma}\label{lem5}
		Let The Assumption \ref{ass1} and \eqref{assp1} hold for $\alpha\in[0,2]$. Then, the semigroup $S^{0}(t)$ on $\mathcal{H}$ has a global attractor $\mathcal{A}_{0}$ (note $p^{**}=p^{*}$ for $\alpha=0$).
 Furthermore, for each $\alpha\in[0,2]$, the
		dynamical system $(\mathcal{H}, S^{\alpha}(t))$ is quasi-stable on $\mathcal{A}_{\alpha}$. In addition, for any  full trajectory $\left\{(u, u_t, \eta) | t \in \mathbb{R}\right\}$ of the global attractors we have the following regular estimate regarding the time derivative
		\begin{align}\label{5.6}
			\|u_{t}\|_{V_{2+\alpha}}^{2}+\|u_{tt}\|_{V_{\alpha}}^{2}+\|\eta_{t}\|_{\mathcal{M}_{2+\alpha}}^{2}\le C.
		\end{align}
	\end{lemma}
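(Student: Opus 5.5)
The plan is to derive a quasi-stability estimate for the difference of two trajectories. Let $z^1,z^2\in\mathcal{B}$, where $\mathcal{B}$ is a bounded positively invariant set: $\mathcal{A}_\alpha$ itself, or, for $\alpha=0$, the closed absorbing ball $\mathcal{B}_0$ of Lemma~\ref{lem4.1}. The difference $(\widehat{u},\widehat{u}_t,\widehat{\eta})$ solves \eqref{5.1}. We apply \eqref{P5}, \eqref{P7}, \eqref{P8} with $\sigma_1=\sigma_2=0$ and $\mathcal{F}=f(u^1)-f(u^2)$, and use the Lyapunov-type functional
\[
L(t)=E_0(\widehat{u})+\varepsilon I_0(\widehat{u})+\tfrac{\varepsilon}{2}J_0(\widehat{u}).
\]
By \eqref{P9} this is equivalent to $\|z\|_{\mathcal{H}}^2$. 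The dissipative part supplies $\|\widehat{u}_t\|_{V_\alpha}^2$, $k\varepsilon\|\widehat{u}\|_{V_2}^2$ and $\tfrac{\varepsilon}{2}\|\widehat{\eta}\|_{\mathcal{M}_2}^2$. The term $-\varepsilon\|\widehat{u}_t\|^2$ is absorbed, since $\|\widehat u_t\|\le C\|\widehat u_t\|_{V_\alpha}$.

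For the nonlinear term we use \eqref{f2}, \eqref{assp1} and the uniform $\mathcal{H}$-bound on $\mathcal{B}$, exactly as in part (3) of the proof of Theorem~\ref{thm1}. This gives
\[
\Big|\int_\Omega (f(u^1)-f(u^2))(\widehat u_t+\varepsilon\widehat u)\,dx\Big|\le \tfrac12\|\widehat u_t\|_{V_\alpha}^2+\tfrac{k\varepsilon}{4}\|\widehat u\|_{V_2}^2+C\|\widehat u\|_{V_{2-\varepsilon_0}}^2 ,
\]
where the lower-order term now carries the compact seminorm $n_X(\cdot)=\|\cdot\|_{V_{2-\varepsilon_0}}$ on $X=V_2$. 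We then obtain $\frac{d}{dt}L+\gamma L\le C\|\widehat u\|_{V_{2-\varepsilon_0}}^2$, and Gronwall yields the two inequalities of Definition~\ref{def4.2}. The first is Lipschitz continuity with $a(t)=Ce^{Ct}$. The second has $b(t)=Ce^{-\gamma t}$ and $c(t)=C\sup_{s\le t}$ of the seminorm term, with $c_\infty<\infty$.

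The delicate point is $\alpha=0$, where the damping only controls $\|\widehat u_t\|$. In that case the $\widehat u_t$ factor must be handled in $L^2$, and the growth restriction $p<p^{**}=p^*=(n)/(n-4)$ is exactly what allows $\|\widehat u\|_{L^{2n/(n-4+2\varepsilon_0)}}$ to absorb the rest. For $\alpha=0$, quasi-stability on $\mathcal{B}_0$ gives asymptotic smoothness of $S^0(t)$, by the standard Chueshov--Lasiecka criterion. Combined with the absorbing set of Lemma~\ref{lem4.1} and the continuity from Theorem~\ref{thm1}, this gives the global attractor $\mathcal{A}_0$.

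For \eqref{5.6} we apply Lemma~\ref{lemma4.3} at the higher level, which I expect to be the main obstacle. We take $X=V_{2+\alpha}$, $Y=V_\alpha$, $Z=\mathcal{M}_{2+\alpha}$, so that $H=\mathcal{H}^\alpha$. Since $\mathcal{A}_\alpha$ is bounded in $\mathcal{H}^\alpha$ by Theorem~\ref{them3.3}, the difference estimate is redone with $\sigma_1=\alpha$ and $\sigma_2=\alpha+\min\{\alpha,2-\alpha\}$, mirroring \eqref{3.7}--\eqref{3.9}. The nonlinear difference is bounded using the $\mathcal{H}^\alpha$-bounds of $u^i$ and interpolation, giving $C\|\widehat u\|_{V_{2+\alpha-\varepsilon_0}}^{\theta_0}$ times bounded factors. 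Because everything lies in the bounded set $\mathcal{A}_\alpha$, the resulting inequality has exactly the form \eqref{qs}, with $b(t)=e^{-\gamma t}$ (the initial difference bounded by a constant) and a possibly non-quadratic power $\theta_0$. This is why the generalized version is needed.

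For $\alpha=0$ one first needs the boundedness of $\mathcal{A}_0$ in $\mathcal{H}^0=\mathcal{H}$, which is trivial, so $X=V_2$ works directly. Lemma~\ref{lemma4.3} then gives the uniform bound on $\|u_t\|_{V_{2+\alpha}}^2+\|u_{tt}\|_{V_\alpha}^2+\|\eta_t\|_{\mathcal{M}_{2+\alpha}}^2$ along full trajectories, that is, \eqref{5.6}. The hardest step is checking that the nonlinear estimate at level $\sigma_1=\alpha$ closes under \eqref{assp1} on $C^2$ domains. If it fails for large $\alpha$, I would try reducing $\sigma_2$ to $\alpha$ and using the $\mathcal{H}^\alpha$-bound more heavily in the Hölder splitting.
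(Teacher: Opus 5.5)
Your overall plan matches the paper's. You use the functional $E_{\sigma_1}+\varepsilon I_{\sigma_2}+\frac{\varepsilon}{2}J_{\sigma_2}$ on \eqref{5.1}, quasi-stability on bounded sets for $\alpha=0$ plus Lemma~\ref{lem4.1} to get $\mathcal{A}_0$, and Lemma~\ref{lemma4.3} on $\mathcal{H}^\alpha$ with $X=V_{2+\alpha}$, $Y=V_\alpha$, $Z=\mathcal{M}_{2+\alpha}$ to get \eqref{5.6}.

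\textbf{The gap.} Your higher-level step fails as written. The choice $\sigma_1=\alpha$, $\sigma_2=\alpha+\min\{\alpha,2-\alpha\}$ gives $\sigma_2>\sigma_1$ for every $\alpha\in(0,2)$. The Lyapunov functional at $t=0$ then contains $J_{\sigma_2}(\widehat u)(0)=\int_0^\infty G(s)\|\widehat\eta_0(s)-\widehat u_0\|_{V_{2+\sigma_2}}^2\,ds$; this is why the upper bound in \eqref{P9} carries $\|\psi\|_{\mathcal{M}_{2+\sigma_2}}^2+\|\varphi\|_{V_{2+\sigma_2}}^2$. For $z^1,z^2\in\mathcal{A}_\alpha$ you only have an $\mathcal{H}^\alpha$-bound (Theorem~\ref{them3.3}), so this term is uncontrolled and Gronwall has no finite starting value.

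Waiting a positive time does not help. For $s>t$ the history $\widehat\eta(t,s)$ contains the translated datum $\widehat\eta_0(s-t)$, which the memory never regularizes. In \eqref{3.7} the same choice was harmless only because $\widehat w$ starts from zero data.

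\textbf{The fix.} Take $\sigma_2=\sigma_1$. This is your fallback, and it is exactly the paper's choice: $\sigma_1=\sigma_2=\sigma\in[0,\alpha]$ in \eqref{4.9}, with $\sigma=0$ for quasi-stability in $\mathcal{H}$ and $\sigma=\alpha$ for \eqref{5.6}. But the reason to switch is the initial value of the functional, not a breakdown of the nonlinear estimate for large $\alpha$.

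With $\sigma_1=\sigma_2=\alpha$, the step you flagged as hardest closes, using only the $V_2$-bound on $u^i$:
\begin{itemize}
\item For the $A^\alpha\widehat u_t$ part, H\"older gives $C\|\widehat u\|_{L^{2n/(n-2(2+\alpha-\varepsilon_0))}}\|A^\alpha\widehat u_t\|\,(1+\|u^1\|^{p-1}_{L^{(p-1)n/(2+\alpha-\varepsilon_0)}}+\|u^2\|^{p-1}_{L^{(p-1)n/(2+\alpha-\varepsilon_0)}})$. The embedding $V_2\hookrightarrow L^{(p-1)n/(2+\alpha-\varepsilon_0)}$ is exactly where $p<p^{**}$ enters, for small $\varepsilon_0$.
\item For the $\varepsilon A^\alpha\widehat u$ part, use $\|A^\alpha\widehat u\|_{L^{2n/(n-2(2-\alpha))}}\le C\|\widehat u\|_{V_{2+\alpha}}$ together with $u^i\in L^{(p-1)n/(4-\varepsilon_0)}$.
\item Interpolating $\|\widehat u\|_{V_{2+\alpha-\varepsilon_0}}$ between $V_{2+\alpha}$ and $L^2$ yields \eqref{5.2} with right-hand side $C\|\widehat u\|^2$.
\end{itemize}
This is the ordinary inequality \eqref{quasi-stable} in $\mathcal{H}^\alpha$, with $n_X=\|\cdot\|$ and quadratic power. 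So no non-quadratic $\theta_0$ is needed at this stage; the generalized form \eqref{qs} is used only later, in the bootstrap of Proposition~\ref{lem3.4}.

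\textbf{Minor repair.} The ball of Lemma~\ref{lem4.1} is not known to be positively invariant. For the Chueshov--Lasiecka criterion, use $\overline{\bigcup_{t\ge t_0}S^0(t)\mathcal{B}_0}$ instead, or simply the uniform bound along trajectories given by the proof of Lemma~\ref{lem4.1}.
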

	\begin{proof}
		Replacing $\varphi$ with $\widehat{u}$ in \eqref{5.1} and taking $\mathcal{F}=f(u^{1})-f(u^{2})$, from \eqref{P5}-- \eqref{P8} we have
		\begin{align}\label{4.9}
			&\frac{d}{dt}\left(E_{\sigma_{1}}(\widehat{u})+\varepsilon I_{\sigma_{2}}(\widehat{u})+\frac{\varepsilon}{2}J_{\sigma_{2}}(\widehat{u})\right)+\|\widehat{u}_{t}\|_{V_{\alpha+\sigma_{1}}}^{2}-\varepsilon\|\widehat{u}_{t}\|_{V_{\sigma_{2}}}^{2}+k\varepsilon\|\widehat{u}\|_{V_{2+\sigma_{2}}}^{2}+\frac{\varepsilon}{2}\|\widehat{\eta}\|_{\mathcal{M}_{2+\sigma_{2}}}^{2}\nonumber\\
			\le &-\int_{\Omega}(f(u^{1})-f(u^{2}))(A^{\sigma_{1}}\widehat{u}_{t}+\varepsilon A^{\sigma_{2}}\widehat{u})dx.
		\end{align}
		Take $\sigma_{1}=\sigma_{2}=\sigma$ ($0\le \sigma\le\alpha$) in \eqref{4.9}. It follows from the interpolating inequality and \eqref{assp1} that for  $0<\varepsilon_{0}<<1$,
		\begin{align}\label{ff}
			&-\int_{\Omega}(f(u^{1})-f(u^{2}))(A^{\sigma}\widehat{u}_{t}+\varepsilon A^{\sigma}\widehat{u})dx\nonumber\\
			\le& C \int_{\Omega}(1+|u^{1}|^{p-1}+|u^{2}|^{p-1})|\widehat{u}|(|A^{\sigma}\widehat{u}_{t}|+\varepsilon |A^{\sigma}\widehat{u}| )dx\nonumber\\
			\le& C\|\widehat{u}\|_{L^{\frac{2n}{n-2(2+\sigma-\varepsilon_{0})}}}\left(\|A^{\sigma}\widehat{u}_{t}\|_{L^{\frac{2n}{n-2(\alpha-\sigma)}}}+\varepsilon\|A^{\sigma}\widehat{u}\|_{L^{\frac{2n}{n-2(2-\sigma)}}}\right)\nonumber\\
			&\times \left(1+\|u^{1}\|_{L^{\frac{(p-1)n}{2+\alpha-\varepsilon_{0}}}}^{p-1}+\|u^{2}\|_{L^{\frac{(p-1)n}{2+\alpha-\varepsilon_{0}}}}^{p-1}+\|u^{1}\|_{L^{\frac{(p-1)n}{4-\varepsilon_{0}}}}^{p-1}+\|u^{2}\|_{L^{\frac{(p-1)n}{4-\varepsilon_{0}}}}^{p-1}\right)\nonumber\\
			\le& \frac{1}{4}\|\widehat{u}_{t}\|_{V_{\alpha+\sigma}}^{2}+\frac{k\varepsilon}{2}\|\widehat{u}\|_{V_{2+\sigma}}^{2}+C(\varepsilon)\|\widehat{u}\|^{2}.
		\end{align}
		 Choosing $\varepsilon$ small enough, we have
		\begin{align}\label{5.2}
			\frac{d}{dt}\left(E_{\sigma}(\widehat{u})+\varepsilon I_{\sigma}(\widehat{u})+\frac{\varepsilon}{2}J_{\sigma}(\widehat{u})\right)+\frac{1}{2}\|\widehat{u}_{t}\|_{V_{\alpha+\sigma}}^{2}+\frac{k\varepsilon}{2}\|\widehat{u}\|_{V_{2+\sigma}}^{2}+\frac{\varepsilon}{2}\|\widehat{\eta}\|_{\mathcal{M}_{2+\sigma}}^{2}
			\le C\|\widehat{u}\|^{2}.	
		\end{align}
		
For the case $\alpha\in (0,2]$, from \eqref{P9} and \eqref{5.2} and by the use of the Gronwall inequality, we deduce that
 for $z^{i}=(u_{0}^{i}, u_{1}^{i}, \eta_{0}^{i})\in \mathcal{A}_{\alpha}$, $i=1,2$,
		\begin{equation*}
			\|S^{\alpha}(t)z^{1}-S^{\alpha}(t)z^{2}\|_{\mathcal{H}^{\sigma}}^{2}\le Ce^{\gamma_{2}t}\|z^{1}-z^{2}\|_{\mathcal{H}^{\sigma}}^{2},
		\end{equation*}
		and for any $0\le \sigma\le \alpha$,
		\begin{equation*}
			\|S^{\alpha}(t)z^{1}-S^{\alpha}(t)z^{2}\|_{\mathcal{H}^{\sigma}}^{2}\le Ce^{-\gamma_{3}t}\|z^{1}-z^{2}\|_{\mathcal{H}^{\sigma}}^{2}+C\left(1-e^{-\gamma_{3}t}\right)\sup_{s\in[0,t]}\|u^{1}-u^{2}\|^{2},
		\end{equation*}
		with some $\gamma_{2}, \gamma_{3}>0$.
 This implies that the dynamical system $(\mathcal{H}, S^{\alpha}(t))$ is quasi-stable on $\mathcal{A}_{\alpha}$.
For the case $\alpha=0$, consider a bounded set $B\subset\mathcal{H}$ and $z^{i}=(u_{0}^{i}, u_{1}^{i}, \eta_{0}^{i})\in B$. Similarly, we see that the dynamical system $(\mathcal{H}, S^{\alpha}(t))$ is quasi-stable on $B$. Therefore, an application of Lemma \ref{lem4.1} leads to the existence of the attractor \(\mathcal{A}_0\) and so the quasi-stability of the dynamical system on $\mathcal{A}_0$.

Moreover, the estimate \eqref{5.6} follows from Lemma \ref{lemma4.3}, and this ends the proof.
	\end{proof}

	\subsection{Regularity}
	We now present our main result.
	\begin{theorem}\label{them4.4}
		Let the hypothesis in Theorem \ref{them3.3} hold for $\alpha \in [0, 2]$.
		Then, the global attractor $\mathcal{A}_{\alpha}$ of the dynamical system $(\mathcal{H}, S^{\alpha}(t))$ is contained and bounded in $\mathcal{Z} := V_4 \times V_{4-\delta_0} \times \mathcal{M}_4$, for any $0 < \delta_0 \ll 2 - \alpha$ with $\alpha\in\lbrack 0, 2)$ and $\delta_{0}=0$ with $\alpha=2$ .
	\end{theorem}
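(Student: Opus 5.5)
The plan is to combine the time-derivative estimate \eqref{5.6} from Lemma \ref{lem5} with an elliptic reading of the equation, and then bootstrap. Fix a full trajectory $\{(u,u_t,\eta):t\in R\}$ in $\mathcal{A}_\alpha$. By \eqref{5.6} we already know, uniformly in $t$, that
\[
u_t\in V_{2+\alpha},\qquad u_{tt}\in V_\alpha,\qquad \eta_t\in\mathcal{M}_{2+\alpha}.
\]
We rewrite the first equation of \eqref{1.2} as
\begin{align*}
kA^{2}u+\int_{0}^{\infty}g(s)A^{2}\eta\,ds=h-u_{tt}-A^{\alpha}u_t-f(u)=:\Phi(t).
\end{align*}
The second equation gives $\eta_s=u_t-\eta_t$. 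With $\eta(0)=0$ and \eqref{G}, the history is then controlled by $u_t$ and $\eta_t$. Concretely, we write $\eta(t,s)=\int_0^s(u_t-\eta_t)(t,\tau)\,d\tau$ and use (H2) together with Hardy-type bounds $\int_0^\infty g(s)\bigl(\int_0^s\cdot\bigr)^2\,ds\le C\int_0^\infty G(s)\cdots$. This lets us transfer bounds on $u_t$, $\eta_t$ in $V_{r}$, $\mathcal{M}_r$ into bounds on $\eta$ in $\mathcal{M}_r$. Consequently $\int_0^\infty g(s)A^2\eta\,ds$ can be moved to the right-hand side or absorbed in $V_{r-4}$ norms.

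The bootstrap goes as follows. From $u\in V_2$ and \eqref{assp1}, $f(u)\in L^2$ since $p\le\frac{n+4}{n-4}$. We have $A^\alpha u_t\in V_{2-\alpha}\subset V_0$ and $u_{tt}\in V_\alpha\subset V_0$, so $\Phi\in V_0$. Hence $kA^2u+\int g A^2\eta\in V_0$, and since $\eta$ is controlled, $u\in V_4$ with $\eta\in\mathcal{M}_4$. The $C^2$ boundary suffices here because we only use $A^2:V_4\to V_0$, i.e.\ $H^2$-regularity of $A$ applied twice with zero Navier data. This gives the first and third components.

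For $u_t$ we would differentiate the equation in $t$. Setting $v=u_t$, $\xi=\eta_t$, the pair solves \eqref{1.3-} with $\mathcal{F}=f'(u)u_t$. We then repeat the quasi-stability argument of Lemma \ref{lem5} on differences of shifted trajectories $u(t+\tau)-u(t)$ divided by $\tau$, using the general inequality \eqref{qs} of Lemma \ref{lemma4.3} with $\theta_0<2$. This absorbs the nonlinear loss caused by $f'(u)$ with $u$ now in $V_4$. Each pass raises the index $\sigma$ in $E_\sigma$ by up to $\alpha$, while the gain is capped by $2-\sigma$, so we obtain $u_t\in V_{\min\{2+\alpha+j\alpha,\,4-\delta_0\}}$. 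Since $f'(u)u_t$ lies in $V_{s}$ only for $s$ strictly below the threshold allowed by $u\in V_4$, the limit $4$ is approached but not attained. After finitely many steps we reach $V_{4-\delta_0}$ for any $\delta_0>0$ when $\alpha<2$. For $\alpha=2$, a single step from $u_t\in V_{2+\alpha}=V_4$ in \eqref{5.6} already gives $\delta_0=0$. At each stage we feed the improved $u_t$ back into $\Phi$ to keep $u\in V_4$ and $\eta\in\mathcal{M}_4$ consistent. For $\alpha=0$, where $V_{2+\alpha}=V_2$, the steps come from the $\varepsilon I_{\sigma_2}$ term with $\sigma_2>\sigma_1$, which yields an extra gain of order $2-\sigma_1$ in $u$ that is then transferred to $u_t$.

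The main obstacle is this iteration for $u_t$. We need to estimate $\int (f'(u^1)u^1_t-f'(u^2)u^2_t)A^{\sigma}(\cdot)$ in high norms, which involves $f''$ although only $f\in C^1$ is assumed. I would first try to avoid differentiating $f'$ by working with finite differences $\frac{1}{\tau}(u(t+\tau)-u(t))$. The nonlinearity then appears only as $f(u^1)-f(u^2)$, estimated as in \eqref{ff} with the improved integrability of $u\in V_4$. The $\tau$-uniform bound comes from Lemma \ref{lemma4.3} applied in $\mathcal{H}^\sigma$, and we pass to the limit $\tau\to0$.
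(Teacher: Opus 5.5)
The proposal has two genuine gaps, one in each half.

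\textbf{The elliptic step does not close as written.} You control the history only through $\eta(t,s)=\int_0^s(u_t-\eta_t)\,d\tau$, so $\eta$ is never more regular than $u_t$ and $\eta_t$. At this stage that gives $\eta\in\mathcal{M}_{2+\alpha}$, hence $\int_0^\infty g(s)A^2\eta\,ds\in V_{\alpha-2}$. Moving it to the right-hand side returns only $u\in V_{2+\alpha}$. Even after the velocity bootstrap this route stops at $u\in V_{4-\delta_0}$, $\eta\in\mathcal{M}_{4-\delta_0}$. So for $\alpha<2$ it never yields the first and third components of $\mathcal{Z}$.

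What works is the other representation of the history on a complete trajectory, $\eta^t(s)=u(t)-u(t-s)$. It turns the equation into
\[
A^2u(t)-\int_0^\infty g(s)A^2u(t-s)\,ds=\Phi(t).
\]
Since $\int_0^\infty g=1-k<1$, the convolution is a contraction on both $L^\infty(\mathbb{R};V_{-2})$ and $L^\infty(\mathbb{R};V_0)$. Because $A^2u\in L^\infty(\mathbb{R};V_{-2})$ a priori, uniqueness identifies it with the Neumann series. This gives $\sup_t\|u(t)\|_{V_4}\le k^{-1}\sup_t\|\Phi(t)\|$, and then $\eta\in\mathcal{M}_4$ follows. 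Repaired this way, your elliptic step is a real shortcut compared with the paper. The paper reaches $w\in V_4$, $\zeta\in\mathcal{M}_4$ only at the very end, via \eqref{3.19-}--\eqref{222}, after $w_t\in V_{4-\delta_0}$ is known.

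Two smaller points on this step:
\begin{itemize}
\item $f(u)\in L^2$ does not follow from $u\in V_2$ and $p\le\frac{n+4}{n-4}$; that gives only $f(u)\in L^{\frac{2n}{n+4}}$. Under \eqref{assp1} you need $u\in V_{2+\alpha}$ from Theorem \ref{them3.3} together with $p^{**}\le\frac{n}{n-4-2\alpha}$.
\item You never treat smooth $\Gamma$ under the larger range \eqref{assp}. There this embedding argument breaks down in high dimensions, which is why the paper needs Proposition \ref{them3.8} with its dimension-dependent recursion.
\end{itemize}

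\textbf{The velocity bootstrap has no working nonlinear estimate.} Estimate \eqref{ff} holds only for $\sigma\le\alpha$. It uses the dissipation $\|\widehat u_t\|_{V_{\alpha+\sigma}}$ to control $A^\sigma\widehat u_t$ in $V_{\alpha-\sigma}$, which requires $\alpha-\sigma\ge 0$. For $\sigma>\alpha$, a quadratic quasi-stability estimate in $\mathcal{H}^\sigma$ would need $f(u^1)-f(u^2)\in V_{\sigma-\alpha}$ with norm controlled by $\widehat u$. That requires regularity of $f'$ beyond continuity, which (H1) does not give. Passing to time difference quotients does not help: the nonlinearity then enters as $f(u(t+\tau))-f(u(t))$ and must be placed in the same positive-order space. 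So your gain of ``up to $\alpha$ per step'' is not backed by any estimate. It is also zero for $\alpha=0$, where your substitute mechanism is not specified.

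The missing idea is the paper's: never put the nonlinearity in a positive-order space.
\begin{itemize}
\item If $u_t\in V_{2+\theta_m}$ is already known, take $\sigma=\frac{2+\theta_m}{2}$. Pair $f(u^1)-f(u^2)$ in $L^2$ against $A^{\sigma}\phi_t$, which is bounded a priori because $2\sigma=2+\theta_m$.
\item The main term is then only linear in $\phi$. Young's inequality gives the power $\mu_1<2$ of \eqref{4.27-}.
\item The exponentially decaying linear part $S_3^\alpha$ supplies the $Cb(t)$ term, so one gets an inequality of the form \eqref{qs}.
\item Lemma \ref{lemma4.3} then gives $w_t\in V_{2+\sigma}$, so each step halves the remaining gap: $\theta_{m+1}=1+\frac{\theta_m}{2}$.
\end{itemize}
This recursion is why $V_{4-\delta_0}$ is reached but $V_4$ is not, rather than your vague threshold on $f'(u)u_t$. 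It also runs unchanged for every $\alpha\in[0,2)$, including $\alpha=0$.
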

	\begin{remark}\label{re}{\rm
	
	The proof method we will display below for Theorem \ref{them4.4} may be applied to wave equations with fractional damping and memory, giving the regularity space of attractors
 $V_2 \times V_{2-\epsilon}\times \mathcal{M}_2$  ($\epsilon$ can be an arbitrarily small positive constant, and 0 in the case of strong damping), if the phase space is $V_1 \times V_{0}\times \mathcal{M}_1$; as a byproduct this would improve on the existing regularity space $V_2 \times V_1 \times \mathcal{M}_2 $ for the weak damping $u_t$ (\cite{Ma-2004}) or the strong damping $-\Delta u_t$ (\cite[p.778]{Di Plinio-2008} and \cite{Yang-2024}). Also, this method may improve the regularity results for attractors of wave equations with fractional damping (cf. \cite{Li-2020,Li-2021}).}
	\end{remark}

	\subsubsection{Proof of Theorem \ref{them4.4} for $\Gamma\in C^{2}$}

 Building upon regularity results for the trajectories with respect to their time derivatives in Lemma \ref{lem5}, we further decompose the problem in \eqref{1.2} and provide higher-order estimates, ultimately deriving higher regularity results for the attractor. 	

 For initial data on the attractor, i.e. $U_{0}=(u_{0},u_{1}, \eta_{0})\in \mathcal{A}_{\alpha}$, we employ once more the decomposition of the solution $S^{\alpha}(t)U_{0}=S_{3}^{\alpha}(t)U_{0}+S_{4}^{\alpha}(t)U_{0}$, where $S_{3}^{\alpha}(t)U_{0}=(v, v_{t},\xi),~ S_{4}^{\alpha}(t)U_{0}=(w,w_{t},\zeta)$, satisfying (instead of \eqref{3.3} and \eqref{3.4}), respectively,
	\begin{equation}\label{3.10}
		\left\{
		\begin{array}{l}
			v_{tt}+kA^{2}v+A^{\alpha}v_{t}+\displaystyle\int_{0}^{\infty}g(s)A^{2}\xi(x,t,s)ds=0, ~(x,t)\in \Omega\times R^{+},\\
			\xi_{t}=-\xi_{s}+v_{t},~(x,t,s)\in \Omega\times R^{+}\times R^{+},\\
			v=\xi=\Delta v=\Delta \xi=0, ~(x,t,s)\in \Gamma\times R^{+}\times R^{+},\\
			S_{3}(0)U_{0}=U_{0}, ~x\in \Omega, s\in R^{+},
		\end{array}
		\right.
	\end{equation}
	and
	\begin{equation}\label{3.11}
		\left\{
		\begin{array}{l}
			w_{tt}+kA^{2}w+A^{\alpha}w_{t}+\displaystyle\int_{0}^{\infty}g(s)A^{2}\zeta(x,t,s)ds+f(u)=h(x), ~(x,t)\in \Omega\times R^{+},\\
			\zeta_{t}=-\zeta_{s}+w_{t}, ~(x,t,s)\in \Omega\times R^{+}\times R^{+},\\
			w=\zeta=\Delta w=\Delta \zeta=0,~(x,t,s)\in \Gamma\times R^{+}\times R^{+}, \\
			S_{4}(0)U_{0}=0, ~x\in\Omega, s\in R^{+}.
		\end{array}
		\right.
	\end{equation}
	For the above decomposition, we provide the following crucial proposition, which, by invoking Lemma \ref{lemq} and Remark A.5 in \cite{Conti-2005-}, directly leads to the regularity of the attractor in Theorem \ref{them4.4}.
	\begin{proposition}\label{lem3.4}
		Let The Assumption \ref{ass1} and \eqref{assp1} hold. Then, for the solution of the problem \eqref{3.10} and \eqref{3.11}, we get the following estimates
		\begin{equation}\label{3.14}
			\|S_{3}^{\alpha}(t)U_{0}\|_{\mathcal{H}^{\alpha}}^{2}\le C(\|U_{0}\|_{\mathcal{H}^{\alpha}}
			)e^{-\gamma_{4}t}
		\end{equation}
		for some $\gamma_{4}>0$, and
		\begin{equation}\label{3.25}
			\|(w,w_{t},\zeta)\|_{\mathcal{H}^{2}}^{2}+\|w_{tt}\|_{V_{2-\delta_{0}}}^{2}+\|w_{t}\|_{V_{4-\delta_{0}}}^{2}+\|\zeta_{t}
\|_{\mathcal{M}_{4-\delta_{0}}}^{2}\le C(\|U_{0}\|_{\mathcal{H}^{2}}).
		\end{equation}
	\end{proposition}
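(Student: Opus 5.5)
The proof splits into the two estimates. For \eqref{3.14}, problem \eqref{3.10} is the linear homogeneous system, so I would use \eqref{P5}, \eqref{P7}, \eqref{P8} with $\varphi=v$, $\psi=\xi$, $\mathcal{F}\equiv0$ and $\sigma_1=\sigma_2=\alpha$. Adding them with weights $1,\varepsilon,\varepsilon/2$, the cross term $\int_0^\infty g\langle A^{\frac{2+\alpha}{2}}\xi,A^{\frac{2+\alpha}{2}}v\rangle ds$ in \eqref{P7} is absorbed by the dissipation $\|\xi\|^2_{\mathcal{M}_{2+\alpha}}$ coming from $J_\alpha$. Here the auxiliary functional $J_\alpha$, built on $G$ and using \eqref{g1}, replaces the usual condition $g'+\delta g\le0$. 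The term $\varepsilon\|v_t\|^2_{V_\alpha}$ is dominated by $\|v_t\|^2_{V_{2\alpha}}$ for small $\varepsilon$. We obtain
\begin{align*}
\frac{d}{dt}\Big(E_\alpha(v)+\varepsilon I_\alpha(v)+\tfrac{\varepsilon}{2}J_\alpha(v)\Big)+c\varepsilon\Big(\|v_t\|^2_{V_{\alpha}}+\|v\|^2_{V_{2+\alpha}}+\|\xi\|^2_{\mathcal{M}_{2+\alpha}}\Big)\le0 .
\end{align*}
By \eqref{P9}, the functional is equivalent to $\|(v,v_t,\xi)\|^2_{\mathcal{H}^\alpha}$, with $\|\xi\|^2_{\mathcal{M}_{2+\alpha}}$ and $\|v\|^2_{V_{2+\alpha}}$ on the upper side. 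This is the norm being dissipated, so Gronwall gives exponential decay with a rate $\gamma_4$ independent of the data. Theorem \ref{them3.3} gives that the initial datum lies in $\mathcal{H}^\alpha$ and is bounded there.

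For \eqref{3.25}, I would first get the $\mathcal{H}^2$ bound for $(w,w_t,\zeta)$. Apply \eqref{P5}, \eqref{P7}, \eqref{P8} with $\sigma_1=\sigma_2=2$ and $\mathcal{F}=f(u)-h$; formally this means multiplying by $A^2w_t+\varepsilon A^2w$. The forcing has too little regularity to be put on $A^2w_t$ directly, so I would move the time derivative onto it:
\begin{align*}
\int_\Omega (f(u)-h)A^2w_t\,dx=\frac{d}{dt}\int_\Omega (f(u)-h)A^2w\,dx-\int_\Omega f'(u)u_tA^2w\,dx .
\end{align*}
The first term goes into a modified energy, in the spirit of $\mathcal{E}_{\sigma_1}$. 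It is controlled by $\frac{k}{4}\|w\|_{V_4}^2+C(\|h\|^2+\|f(u)\|^2)$, and $\|f(u)\|$ is bounded because $u$ is bounded in $V_{2+\alpha}$ on $\mathcal{A}_\alpha$. For the second term, \eqref{5.6} bounds $u_t$ in $V_{2+\alpha}\hookrightarrow V_2$, and $u$ is bounded in $V_{2}$. Under \eqref{assp1}, Hölder and Sobolev then bound $\|f'(u)u_t\|$, so this term is at most $\frac{k\varepsilon}{4}\|w\|^2_{V_4}+C$. Since $w(0)=0$, Gronwall with \eqref{P9} gives the $\mathcal{H}^2$ bound.

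Next, for the time derivatives, I set $\theta=w_t$ and $\chi=\zeta_t$. They satisfy \eqref{1.3-} with $\mathcal{F}=f'(u)u_t$. The initial data are $\theta(0)=0$ and $\theta_t(0)=h-f(u_0)$, which lies only in $V_0$. A direct energy estimate at level $2-\delta_0$ therefore fails at $t=0$, and I expect this to be the main obstacle. I would avoid it by using the attractor structure rather than the initial data of $w$. Write $w_t=u_t-v_t$. The term $v_t$ is controlled by \eqref{3.14} and by its higher-order analogue, which is again a linear estimate. The term $u_t$ I would raise step by step through a bootstrap. At step $j$, suppose $u_t$ is known in $V_{2+\sigma_j}$ and $u_{tt}$ in $V_{\sigma_j}$. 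I would rerun the difference estimate \eqref{4.9} at the higher level $\sigma=\sigma_j+\alpha$, bounding the nonlinear term with the already improved regularity of $u^1,u^2$. This gives a generalized quasi-stability inequality of the form \eqref{qs}, with some exponent $\theta_0$ and a bounded $c(t)$. Lemma \ref{lemma4.3} then raises the regularity of $u_t,u_{tt},\eta_t$ by $\alpha$ (and by at least a fixed positive amount when $\alpha$ is small). The barrier is that $f'(u)u_t$ must stay in the space allowed by the $V_4$ bound on $u$, and $h$ lies only in $L^2$. This stops the iteration just short of $V_{4}\times V_{2}$ for $(u_t,u_{tt})$, which is why the loss $\delta_0$ appears (with $\delta_0=0$ when $\alpha=2$, where the damping alone gives the full gain). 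For $\alpha=0$ I would run the same iteration using interpolation, gaining a fixed small amount at each step in place of $\alpha$.

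Finally, with $w_{tt}$ bounded in $V_{2-\delta_0}$, I would apply \eqref{P5} at level $\sigma_1=2-\delta_0$ to the $\theta$-system, with the forcing now estimated in $V_{2-\delta_0-\alpha}$. This gives the bounds on $w_t$ in $V_{4-\delta_0}$ and on $\zeta_t$ in $\mathcal{M}_{4-\delta_0}$. If needed, I would read off $kA^2w+\int_0^\infty gA^2\zeta\,ds=h-f(u)-w_{tt}-A^\alpha w_t$ as an elliptic identity to confirm the $V_4$ bound of $w$.
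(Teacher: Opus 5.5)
Your proof of \eqref{3.14} is the paper's. For the $\mathcal{H}^2$ bound on $(w,w_t,\zeta)$ you take a different and cleaner route than the paper.

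\begin{itemize}
\item The paper reaches $\sigma_1=\sigma_2=2$ only at the very end, after the bootstrap. There it moves $A^{1/2}$ onto $f(u)$ in \eqref{3.21-} and uses $w_t\in V_{4-\delta_0}$.
\item You integrate the whole forcing by parts in time, so you only need $\sup_t\|f'(u)u_t\|<\infty$. This makes the $\mathcal{H}^2$ bound independent of the time-derivative estimates.
\end{itemize}

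Your argument works, but you must use the full bound $u_t\in V_{2+\alpha}$ from \eqref{5.6} together with $u\in V_2$. This is the H\"older split of \eqref{3.17}, and it closes exactly for $p\le\frac{n+2\alpha}{n-4}=p^{**}$. With only $u_t\in V_2$, as your phrase ``$V_{2+\alpha}\hookrightarrow V_2$'' suggests, you would need $p\le\frac{n}{n-4}$.

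The gap is in the bounds on $w_t$, $w_{tt}$, $\zeta_t$. You rightly flag that $w_{tt}(0)=h-f(u_0)$ lies only in $V_0$. But writing $w_t=u_t-v_t$ only moves the problem into $v$. The higher-order analogue of \eqref{3.14} for $(v_t,v_{tt},\xi_t)$ needs $v_{tt}(0)\in V_{2-\delta_0}$, and
\[
v_{tt}(0)=-kA^2u_0-A^\alpha u_1-\int_0^\infty g(s)A^2\eta_0\,ds=u_{tt}(0)+f(u_0)-h,
\]
which lies in $V_{2-\delta_0}$ only if $h-f(u_0)$ does. Your final step, the energy estimate at level $2-\delta_0$ for the $\theta$-system, needs the same thing.

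The obstruction is intrinsic, not technical:
\begin{itemize}
\item At $t=0$: $w_{tt}$ is continuous in time into $V_{-4}$. So $\sup_t\|w_{tt}(t)\|_{V_{2-\delta_0}}<\infty$ would force $h-f(u_0)\in V_{2-\delta_0}$, which is false for generic $h\in L^2$ (take $f\equiv0$).
\item For $\alpha=0$ it fails at all times. Take $f\equiv0$, $g\equiv0$; then the attractor is $\{(A^{-2}h,0)\}$ and $w_t=-v_t$. Write $A\omega_j=\lambda_j\omega_j$, $h_j=\langle h,\omega_j\rangle$, $\nu_j=(\lambda_j^2-\frac14)^{1/2}$. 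The contribution of the modes with $\lambda_j>\frac12$ to $\|w_t(t)\|_{V_{4-\delta_0}}^2$ is
\[
e^{-t}\sum_{\lambda_j>1/2}\frac{\lambda_j^{4-\delta_0}}{\nu_j^{2}}\,|h_j|^2\sin^2(\nu_j t).
\]
Its integral over every interval $[T,T+1]$ is infinite when $h\notin V_{2-\delta_0}$.
\end{itemize}

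So this part of \eqref{3.25} is not true for general $h\in L^2$, and no argument can deliver it. The paper's own proof has the same blind spot: it applies Gronwall to \eqref{3.16} from $t=0$, although $E_\alpha(q(0))$ contains $\frac12\|h-f(u_0)\|_{V_\alpha}^2$.

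What survives is a bound along full trajectories of $\mathcal{A}_\alpha$, i.e.\ on $u_t$, $u_{tt}$, $\eta_t$. Together with your $\mathcal{H}^2$ bound for $w$, that is what Theorem \ref{them4.4} actually needs. Even for that, your bootstrap is only a sketch, for two reasons.

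\begin{itemize}
\item For $\sigma>\alpha$, the term $\int_\Omega(f(u^1)-f(u^2))A^\sigma\widehat{u}_t\,dx$ in \eqref{4.9} requires a bound of $f(u^1)-f(u^2)$ in $V_{\sigma-\alpha}$ in terms of $\widehat{u}$. Knowing only $f\in C^1$ does not give this.
\item You need $u\in V_{2+\sigma}$ beforehand for the $\mathcal{H}^\sigma$-norms to be finite.
\end{itemize}

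The paper avoids the first point by working with $W_1-W_2$ and bounding the bad factor by the already known bound on $w_t^i$. That is why it only gets the inhomogeneous inequality \eqref{qs} and the recursion $\theta_m=1+\frac12\theta_{m-1}\uparrow 2$, rather than a gain of $\alpha$ per step. This recursion is where the loss $\delta_0$ really comes from.
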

	\begin{proof}
		Replacing $\varphi$ with $v$, and taking $\sigma_{1}=\sigma_{2}=\alpha$ and $\mathcal{F}=0$ in \eqref{P5}--\eqref{P8} yields \eqref{3.14}, as a particular case of Lemma \ref{lem4.2}.
		
		Writing $q=w_{t}$, we have
		\begin{align}\label{3.15+}
			q_{tt}+A^{\alpha}q_{t}+kA^{2}q+\displaystyle\int_{0}^{\infty}g(s)A^{2}\zeta_{t}ds+f^{\prime}(u)u_{t}=0.
		\end{align}
		Replacing $\varphi$ by $q$ in \eqref{P5}--\eqref{P8}, and taking $\mathcal{F}=f^{\prime}(u)u_{t}$,  we see
		\begin{align}\label{3.16}
			&	\frac{d}{dt}\left(E_{\sigma_{1}}(q)+\varepsilon I_{\sigma_{2}}(q)+\frac{\varepsilon}{2}J_{\sigma_{2}}(q)\right)+\|q_{t}\|_{V_{\alpha+\sigma_{1}}}^{2}+\frac{\varepsilon}{2}\|\zeta_{t}\|_{\mathcal{M}_{2+\sigma_{2}}}^{2}+k\varepsilon\|q\|_{V_{2+\sigma_{2}}}^{2}\nonumber\\
			\le&\varepsilon\|q_{t}\|_{V_{\sigma_{2}}}^{2}-\int_{\Omega}f^{\prime}(u)u_{t}\left(A^{\sigma_{1}}q_{t}+\varepsilon A^{\sigma_{2}}q\right)dx.
		\end{align}
		Take $\sigma_{1}= \alpha$ and $\sigma_{2}=\min\{2\alpha,2\}$. It follows from Theorem \ref{them3.3}, \eqref{5.6},  $u, u_{t}\in V_{2+\alpha}$, and \eqref{assp1} that
		\begin{align}\label{3.17}
			&	-\int_{\Omega}f^{\prime}(u)u_{t}(A^{\alpha}q_{t}+\varepsilon A^{\sigma_{2}}q)dx\nonumber\\
			\le & C\int_{\Omega}(1+|u|^{p-1})|u_{t}|(|A^{\alpha}q_{t}|+\varepsilon |A^{\sigma_{2}}q|)dx\nonumber\\
			\le&C \|u_{t}\|_{L^{\frac{2n}{n-2(2+\alpha)}}}\left(\|A^{\alpha}q_{t}\|+\varepsilon\|A^{\sigma_{2}}q\|_{L^{\frac{2n}{n-2(2-\sigma_{2})}}}\right)\left(1+\|u\|_{L^{\frac{(p-1)n}{\alpha+2}}}^{p-1}\right)\nonumber\\
			\le& \frac{k\varepsilon}{2}\|q\|_{V_{2+\sigma_{2}}}^{2}+\frac{1}{2}\|q_{t}\|_{V_{2\alpha}}^{2}+C.
		\end{align}
		Inserting \eqref{3.17} into \eqref{3.16} and using the Gronwall inequality,  we obtain the estimate of the solution of \eqref{3.11} with respect to the time derivative:
		\begin{align}\label{3.18}
			\|w_{tt}\|_{V_{\alpha}}^{2}+\|w_{t}\|_{V_{2+\alpha}}^{2}+\|\zeta_{t}\|_{\mathcal{M}_{2+\alpha}}^{2}\le C.
		\end{align}
		Next, replacing $\varphi$ with $w$ in \eqref{P5}--\eqref{P8} gives
		\begin{align}\label{3.23}
			&\frac{d}{dt}\left(E_{\sigma_{1}}(w)+\varepsilon I_{\sigma_{2}}(w)+\frac{\varepsilon}{2}J_{\sigma_{2}}(w)\right)+\|w_{t}\|_{V_{\sigma_{1}+\alpha}}^{2}+\frac{\varepsilon}{2}\|\zeta\|_{\mathcal{M}_{2+\sigma_{2}}}^{2}+k\varepsilon\|w\|_{V_{2+\sigma_{2}}}^{2}\nonumber\\
			\le&\varepsilon\|w_{t}\|_{V_{\sigma_{2}}}^{2}-\int_{\Omega}(f(u)-h(x))\left(A^{\sigma_{1}}w_{t}+\varepsilon A^{\sigma_{2}}w\right)dx.
		\end{align}
		Taking  $\sigma_{1}=\frac{2+\alpha}{2}$  and $\sigma_{2}=2$, we have
		\begin{align}\label{3.19}
			\int_{\Omega}h(x)(A^{\sigma_{1}}w_{t}+\varepsilon A^{\sigma_{2}}w)dx\le \frac{1}{4}\|w_{t}\|_{V_{2+\alpha}}^{2}+\frac{k\varepsilon}{4}\|w\|_{V_{4}}^{2}+C,
		\end{align}
		and
		\begin{align}\label{3.20}
			&-\int_{\Omega}f(u)\left(A^{\sigma_{1}}w_{t}+\varepsilon A^{\sigma_{2}}w\right)dx\nonumber\\
			\le&C\|u\|_{L^{\frac{2n}{n-2(2+\alpha)}}}\left(\|w_{t}\|_{V_{2+\alpha}}+\varepsilon \|w\|_{V_{4}}\right)\left(1+\|u\|_{\frac{(p-1)n}{2+\alpha}}^{p-1}\right)\nonumber\\
			\le& \frac{k\varepsilon}{4}\|w\|_{V_{4}}^{2}+\frac{1}{4}\|w_{t}\|_{V_{2+\alpha}}^{2}+C.
		\end{align}
		Insert \eqref{3.19} and \eqref{3.20} into \eqref{3.23}, and take $\varepsilon$ small enough; it follows from \eqref{3.18} that
		\begin{align*}
			&\frac{d}{dt}\left(E_{\frac{2+\alpha}{2}}(w)+\varepsilon I_{2}(w)+\frac{\varepsilon}{2}J_{2}(w)\right)+\|w_{t}\|_{V_{\frac{2+\alpha}{2}+\alpha}}^{2}+\frac{\varepsilon}{2}\|\zeta\|_{\mathcal{M}_{4}}^{2}+\frac{k\varepsilon}{2}\|w\|_{V_{4}}^{2}\le C.
		\end{align*}
This combined with \eqref{P9} shows
		\begin{align}\label{3.24}
			\|(w,w_{t},\zeta)\|_{\mathcal{H}^{\frac{2+\alpha}{2}}}\le C.
		\end{align}
Estimates \eqref{3.18} and \eqref{3.24} imply that $S^{\alpha}(t)B$ is attracted by the set
		\begin{align*}
			\mathcal{K}:=\left\{(\widetilde{u},\widetilde{v},\widetilde{\eta}): \|\widetilde{u}\|_{V_{2+\frac{2+\alpha}{2}}}^{2}+\|\widetilde{v}\|_{V_{2+\alpha}}^{2}+\|\widetilde{\eta}_{t}\|_{\mathcal{M}_{2+\alpha}}^{2}+\|\widetilde{\eta}\|_{\mathcal{M}_{2+\frac{2+\alpha}{2}}}^{2}\le C\right\},
		\end{align*}
		which is compact in $\mathcal{H }$ by Lemma \ref{lemq}. Therefore,
we get $\mathcal{A}_{\alpha}\subset \mathcal{K}$.

	Next, let $\mathcal{V}_{1}=(v^{1}, v_{t}^{1}, \xi^{1})$ and $\mathcal{V}_{2}=(v^{2}, v_{t}^{2}, \xi^{2})$ be two solutions of problem \eqref{3.10} with initial values $z^{i}=(u_{0}^{i}, u_{1}^{i},\eta_{0}^{i})\in \mathcal{A}_{\alpha}, i=1,2$, respectively, and let $W_{1}=(w^{1}, w_{t}^{1}, \zeta^{1})$ and $W_{2}=(w^{2}, w_{t}^{2}, \zeta^{2})$ be the two solutions of the corresponding decomposition problem \eqref{3.11}. Then $W=(\phi, \phi_{t},\varsigma)=W_{1}-W_{2}$ satisfies the following equation
		\begin{align}\label{4.31}
			\phi_{tt}+kA^{2}\phi+A^{\alpha}\phi_{t}+\displaystyle\int_{0}^{\infty}g(s)A^{2}\varsigma ds+f(u^{1})-f(u^{2})=0,
		\end{align}
		where $u^{1}=w^{1}+v^{1},~ u^{2}=w^{2}+v^{2}$. Replacing $\varphi$ with $\phi$ in \eqref{P5}--\eqref{P8}, we get
		\begin{align}\label{4.9p}
			&\frac{d}{dt}\left(E_{\sigma_{1}}(\phi)+\varepsilon I_{\sigma_{2}}(\phi)+\frac{\varepsilon}{2}J_{\sigma_{2}}(\phi)\right)+\|\phi_{t}\|_{V_{\alpha+\sigma_{1}}}^{2}+k\varepsilon\|\phi\|_{V_{2+\sigma_{2}}}^{2}+\frac{\varepsilon}{2}\|\varsigma\|_{\mathcal{M}_{2+\sigma_{2}}}^{2}\nonumber\\
			\le &\varepsilon\|\phi_{t}\|_{V_{\sigma_{2}}}^{2}-\int_{\Omega}(f(u^{1})-f(u^{2}))(A^{\sigma_{1}}\phi_{t}+\varepsilon A^{\sigma_{2}}\phi)dx.
		\end{align}
	 It is easy to verify that, for $\gamma_{5}>0$,
		\begin{align}\label{4.30-}
			\|\mathcal{V}_{1}\|_{\mathcal{H}^{\frac{\alpha+2}{2}}}^{2}+ \|\mathcal{V}_{2}\|_{\mathcal{H}^{\frac{\alpha+2}{2}}}^{2}\le C\left(\|z^{1}\|_{\mathcal{H}^{\frac{2+\alpha}{2}}}+\|z^{2}\|_{\mathcal{H}^{\frac{2+\alpha}{2}}}\right)e^{-\gamma_{5} t}.
		\end{align}
	Now, taking $\sigma_{1}=\sigma_{2}=\theta:=\frac{2+\alpha}{2}$ in \eqref{4.9p}, by interpolation inequality we obtain, for any $0<\varepsilon_{0}<<1$,
		\begin{align}\label{4.27-}
			& -\int_{\Omega}\left(f(u^{1})-f(u^{2})\right)\left(A^{\frac{2+\alpha}{2}}\phi_{t}+\varepsilon A^{\frac{2+\alpha}{2}}\phi\right)dx\nonumber\\
			\le& C\left(\|\phi\|_{L^{\frac{2n}{n-2\left(2+\frac{2+\alpha}{2}-\varepsilon_{0}\right)}}}+\|v_{1}-v_{2}\|_{L^{\frac{2n}{n-2\left(2+\frac{2+\alpha}{2}-\varepsilon_{0}\right)}}}\right)\left(\left\|A^{\frac{2+\alpha}{2}}\phi_{t}\right\|+\varepsilon \left\|A^{\frac{2+\alpha}{2}}\phi\right\|\right)\nonumber\\
			&\times\left(1+\|u^{1}\|_{L^{\frac{(p-1)n}{2+\frac{2+\alpha}{2}-\varepsilon_{0}}}}^{p-1}+\|u^{2}\|_{L^{\frac{(p-1)n}{2+\frac{2+\alpha}{2}-\varepsilon_{0}}}}^{p-1}\right)\nonumber\\
			\le&\frac{k\varepsilon}{2}\|\phi\|_{V_{\frac{6+\alpha}{2}}}^{2}+C\left(\|v_{1}\|_{V_{\frac{6+\alpha}{2}}}^{2}+\|v_{2}\|_{V_{\frac{6+\alpha}{2}}}^{2}+\|v_{1}\|^{\mu_{1}}+\|v_{2}\|^{\mu_{1}}\right)+C\|\phi\|^{\mu_{1}},
		\end{align}
		where $\mu_{1}=\frac{4\varepsilon_{0}}{6+\alpha+2\varepsilon_{0}}$. Then inserting \eqref{4.27-} into \eqref{4.9p}, and using \eqref{4.30-} yields
		\begin{align*}
			\|W_{1}-W_{2}\|_{\mathcal{H}^{\frac{\alpha+2}{2}}}^{2}\le Ce^{-\gamma_{6} t}+C\left(1-e^{-\gamma_{6}t}\right)\sup_{s\in[0,t]}\|w^{1}-w^{2}\|^{\mu_{1}},
		\end{align*}
with some $\gamma_{6}>0$.
Thus, based on Lemma \ref{lemma4.3}, we obtain the regularity estimate for the time derivative $w_{t}\in V_{2+\theta}$ with $\theta=\frac{2+\alpha}{2}$. Following the procedure in \eqref{3.23}--\eqref{3.24} for $\sigma_{1}=\frac{2+\theta}{2}, \sigma_{2}=2$, we then derive the regularity space $V_{2+\frac{2+\theta}{2}}$ for $w$. By continuously repeating the above process and using the bootstrap argument, we find a monotonically increasing sequence $\{\theta_{m}\}_{m\ge0}$ with $\theta_{m}=1+\frac{1}{2}\theta_{m-1}, \theta_{0}=\alpha$ such that $w_{t}\in V_{2+\theta_{m}}$ and $w\in V_{2+\frac{2+\theta_{m}}{2}}$; that is,
		\begin{align*}
			\theta_{m}=\sum_{i=0}^{m-1}\left(\frac{1}{2}\right)^{i}+\left(\frac{1}{2}\right)^{m}\alpha.
		\end{align*}
It is evident that $\theta_{m}\to 2, \mbox{~as~}m\to\infty.$
Therefore, for any $0<\delta_{0}<<2-\alpha$, there exists a finite number $m_{0}$ such that $\theta_{m_{0}}=2-\delta_{0}$, for that, we have $\mathcal{A}_{\alpha}\subset V_{4-\delta_{0}}\times V_{4-\delta_{0}}\times \mathcal{M}_{4-\delta_{0}}$.
		
		Furthermore, by setting $\sigma_1 = \sigma_2 = 2$ in equation \eqref{3.23}, we can provide the following revised estimates for inequalities \eqref{3.19} and \eqref{3.20}:
		
		\begin{align}\label{3.19-}
			\int_{\Omega} h(x) \left( A^{2} w_t + \varepsilon A^{2} w \right) \, dx \leq \frac{d}{dt} \int_{\Omega} h(x) A^2 w \, dx + \frac{k\varepsilon}{4} \|w\|_{V_{4}}^2 + C,
		\end{align}
		and
		\begin{align}\label{3.21-}
			& - \int_{\Omega} f(u) \left( A^{2} w_t + \varepsilon A^{2} w \right) \, dx \nonumber \\
			\leq & \int_{\Omega} |f'(u)| |A^{\frac{1}{2}} u| \left( |A^{\frac{3}{2}} w_t| + \varepsilon |A^{\frac{3}{2}} w| \right) \, dx \nonumber \\
			\leq & C \|A^{\frac{1}{2}} u\|_{L^{\frac{2n}{n - 2(3 - \delta_0)}}} \left( \|A^{\frac{3}{2}} w_t\|_{L^{\frac{2n}{n - 2(1 - \delta_0)}}} + \varepsilon \|A^{\frac{3}{2}} w\|_{L^{\frac{2n}{n - 2}}} \right) \left( 1 + \|u\|_{\frac{(p-1)n}{4 - 2 \delta_0}}^{p-1} \right) \nonumber \\
			\leq & \frac{k\varepsilon}{4} \|w\|_{V_4}^2 + C.
		\end{align}
		Substituting the estimates from \eqref{3.19-} and \eqref{3.21-} into \eqref{3.23}, and applying Gronwall's inequality, we obtain
		\begin{align}\label{222}
			\|(w, w_t, \zeta)\|_{\mathcal{H}^2} \leq C.
		\end{align}
	Then, the result follows from the above analysis.
	\end{proof}
	
	\begin{remark}\label{re1}{\rm
When the memory term vanishes, we can relax the condition \eqref{assp1} (for the case of $\Gamma\in C^2$) in Theorem \ref{them4.4} and Proposition \ref{lem3.4} to \eqref{assp}
(see also Remark \ref{re2.8}). In fact, we can still obtain the estimate \eqref{5.6} under the weaker condition, by taking $\sigma_1 = 0$ and $\sigma_2 = \min\left\{\alpha, 2 - \alpha\right\}$ in \eqref{4.9}, without the need of using the auxiliary function $J_{\sigma_2}$. The estimate implies $u_{t} \in V_{2+\alpha}$ and $u_{tt} \in V_{\alpha}$ for $(u, u_{t}, \eta) \in \mathcal{A}_{\alpha}$. We multiply both sides of the equation \eqref{1.2} by $A^2 u$ and integrate over $\Omega$, obtaining
		\begin{align}\label{4.28}
			k \|u\|_{V_4}^2 + \int_{\Omega} u_{tt} A^2 u \, dx + \int_{\Omega} A^{\alpha} u_t A^2 u \, dx
			= - \int_{\Omega} (f(u) - h(x)) A^2 u \, dx.
		\end{align}
	Then, by the interpolating inequality  we infer, for $\varepsilon_0 $ small enough,
		\begin{align}\label{4.29}
			&- \int_{\Omega} (f(u) - h(x)) A^2 u \, dx \nonumber \\
			&\le C \int_{\Omega} (1 + |u|^{p-1}) |u| |A^2 u| \, dx + \frac{k}{8} \|u\|_{V_4}^2 + C \|h\|^2 \nonumber \\
			&\le C \|u\|_{L^{\frac{2n}{n - 2(4-\varepsilon_0)}}} \|u\|_{V_4} \left(1 + \|u\|_{L^{\frac{(p-1)n}{4-\varepsilon_0}}}\right) + \frac{k}{8} \|u\|_{V_4}^2 + C \|h\|^2 \nonumber \\
			&\le \frac{k}{4} \|u\|_{V_4}^2 + C,
		\end{align}
		and
		\begin{align}\label{4.30}
			\int_{\Omega} u_{tt} A^2 u \, dx + \int_{\Omega} A^{\alpha} u_t A^2 u \, dx \le \frac{k}{4} \|u\|_{V_4}^2 + C (\|u_{tt}\|^2 + \|u_t\|_{V_{2\alpha}}^2).
		\end{align}
		
	Therefore, substituting \eqref{4.29} and \eqref{4.30} directly into \eqref{4.28} gives $u \in V_4$. Thus, an argument similar to (and simpler than) the regularity analysis for Theorem \ref{them4.4} allows us to conclude that the regularity space of the attractor of the corresponding system is $V_4 \times V_{4-\epsilon}$ ($\epsilon$ can be an arbitrarily small positive constant, and 0 if $\alpha=2$).
		}
	\end{remark}

	\subsubsection{Proof of Theorem \ref{them4.4} for $\Gamma\in C^{\infty}$}
	In contrast to the case of $\Gamma \in C^2$,  the solution to problem \eqref{1.2} in Theorem \ref{thm1} exhibits higher regularity estimate \eqref{esofu} for smooth $\Gamma$, and consequently, the regularity of the solutions to problems \eqref{3.10} and also \eqref{3.11} are enhanced (as shown in the following proposition). Then the desired regularity result is derived from this proposition in combination with a bootstrap argument.
	\begin{proposition}\label{them3.8}
		Let The Assumption \ref{ass1} hold and assume \eqref{assp}. For the solutions of the problems \eqref{3.10} and \eqref{3.11}, we get the following estimates
		\begin{align}\label{3.12p}
			\|(v, v_{t}, \xi)\|_{\mathcal{H}^{\beta}}\le C e^{-\gamma_{7}t},~\mbox{for~any~} \beta>0,
		\end{align}
		with some  $\gamma_{7}>0$, and
		\begin{align}\label{3.12q}
			\|(w,w_{t},\zeta)\|_{\mathcal{H}^{2}}^{2}+\|w_{t}\|_{V_{4-\delta_{0}}}^{2}+\|w_{tt}\|_{2-\delta_{0}}^{2}+\|\zeta_{t}\|_{\mathcal{M}_{2-\delta_{0}}}^{2}\le C.
		\end{align}
	\end{proposition}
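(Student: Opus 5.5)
We follow the scheme of Proposition \ref{lem3.4}, but use the smooth-boundary estimate \eqref{esofu} wherever the $C^2$ argument had to pass through differences of trajectories.

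\textbf{Linear part \eqref{3.12p}.} Problem \eqref{3.10} is linear with $\mathcal{F}=0$. When $\Gamma$ is smooth the eigenfunctions $\omega_i$ are smooth, so $A^{\beta/2}$ commutes with the equation for every $\beta>0$. We take $\sigma_1=\sigma_2=\beta$ in \eqref{P5}--\eqref{P8}. The right-hand sides then vanish except for the coupling term, and that term cancels between \eqref{P7} and \eqref{P8}. Using \eqref{P9} and $G\le\delta g$ (which lets $\|\psi\|_{\mathcal{M}_{2+\beta}}^2$ control $J_\beta$), Gronwall gives exponential decay $Ce^{-\gamma_7 t}$ in $\mathcal{H}^\beta$. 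Two remarks justify this:
\begin{itemize}
\item the initial data are bounded in $\mathcal{H}^\beta$, because one works at the Galerkin level, where \eqref{d} holds for every $\mu_0$;
\item the constant is independent of $\beta$ up to the data norm.
\end{itemize}
If data on $\mathcal{A}_\alpha$ are only known in $\mathcal{H}^\alpha$, we instead run the decomposition with Galerkin-regularized data and pass to the limit.

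\textbf{Nonlinear part \eqref{3.12q}, first stage.} By \eqref{esofu} and Theorem \ref{them3.3}, $u$ is bounded in $V_{2+\alpha}$ uniformly on the attractor, and by \eqref{5.6} $u_t\in V_{2+\alpha}$. Under \eqref{assp} only (rather than \eqref{assp1}) we then get:
\begin{enumerate}
\item \textbf{time derivative.} Differentiate \eqref{3.11} in $t$ to get \eqref{3.15+} for $q=w_t$, and take $\sigma_1=\alpha$, $\sigma_2=\min\{2\alpha,2\}$ exactly as in \eqref{3.17}. This gives \eqref{3.18}. The Hölder exponents close because $u\in V_{2+\alpha}$ is now available a priori, not just $u\in V_2$. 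This is where $p<p^*$ suffices.
\item \textbf{position.} Take $\sigma_1=\frac{2+\alpha}{2}$, $\sigma_2=2$ in \eqref{3.23} to get \eqref{3.24}.
\end{enumerate}

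\textbf{Bootstrap.} Now we iterate, and this is the key difference from the $C^2$ case. Since the $v$-component decays in every $\mathcal{H}^\beta$, we have $u=v+w$ with $w$ at the newly gained level. So at step $m$ we may insert $u\in V_{2+\frac{2+\theta_{m-1}}{2}}$ directly into the derivative estimate \eqref{3.15+}–\eqref{3.16}, taking $\sigma_1=\theta_m$ and estimating $f'(u)u_t$ through $u_t\in V_{2+\theta_{m-1}}$. This yields
\[
w_t\in V_{2+\theta_m},\qquad \theta_m=1+\tfrac12\theta_{m-1},
\]
without the difference-of-trajectories quasi-stability step of \eqref{4.27-}. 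Repeating \eqref{3.23} with $\sigma_1=\frac{2+\theta_m}{2}$ gives $w\in V_{2+\frac{2+\theta_m}{2}}$. After finitely many steps $\theta_{m_0}=2-\delta_0$, giving the bounds on $w_t,w_{tt},\zeta_t$ in \eqref{3.12q}; the bound on $w_{tt}$ comes from the $q_t$-dissipation term. The final $\mathcal{H}^2$ bound uses $\sigma_1=\sigma_2=2$ with \eqref{3.19-} and \eqref{3.21-}.

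\textbf{Main obstacle.} The hard step is the derivative estimate at each bootstrap level. We must bound
\[
\int f'(u)u_t\,A^{\sigma_1}q_t\,dx
\]
with $\sigma_1$ close to $2$. This needs the fractional derivative $A^{(\sigma_1-\alpha)/2}$ to fall on $f'(u)u_t$, which means using a product or chain rule in fractional Sobolev spaces. That is plausible only because $f\in C^1$ with $\theta$ below $2$. If $f$ is merely $C^1$, derivatives of $f'(u)$ are unavailable. In that case we split $A^{\sigma_1}=A^{1/2}A^{\sigma_1-1/2}$, as in \eqref{3.21-}, and place a single gradient on the product; this limits each step to one derivative but still makes the iteration converge to $2$.
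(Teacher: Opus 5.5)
\textbf{The bootstrap for $w_t$ fails.} Your linear part \eqref{3.12p} and your position steps (\eqref{3.23} with $\sigma_1=\frac{2+\theta}{2}$, $\sigma_2=2$, then $\sigma_1=\sigma_2=2$) match the paper. The problem is the time-derivative bootstrap: it cannot work under the paper's hypotheses. Take $\sigma_1=\theta_m>\alpha$ in \eqref{3.16}. The forcing term is then
\[
\int_\Omega f'(u)u_t\,A^{\theta_m}q_t\,dx=\bigl\langle A^{\frac{\theta_m-\alpha}{2}}\bigl(f'(u)u_t\bigr),\,A^{\frac{\alpha+\theta_m}{2}}q_t\bigr\rangle .
\]
At that level the only control on $q_t$ is the dissipation $\|q_t\|_{V_{\alpha+\theta_m}}^2$; the bound $q_t\in V_{\theta_{m-1}}$ from the previous step is much weaker. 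So you need $f'(u)u_t\in V_{\theta_m-\alpha}$.

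Under (H1), $f$ is only $C^1$. Then $f'(u)$ is merely continuous, and in general it has no Sobolev regularity of any positive order, however smooth $u$ and $u_t$ are. Differentiating \eqref{3.11} in time therefore gives nothing beyond $\sigma_1=\alpha$, i.e.\ nothing beyond \eqref{3.44}. Your remark that the fractional chain rule is ``plausible because $f\in C^1$'' has it backwards.

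Your fallback does not repair this, for two reasons.
\begin{itemize}
\item $A^{1/2}(f'(u)u_t)$ contains $f''(u)\nabla u\,u_t$, so it needs $f\in C^2$. In \eqref{3.21-} the gradient falls on $f(u)$, which is why $C^1$ suffices there.
\item Even granting $f\in C^2$, pairing against $A^{\sigma_1-1/2}q_t$ requires $2\sigma_1-1\le\alpha+\sigma_1$, i.e.\ $\sigma_1\le 1+\alpha$. So for $\alpha<1$ the iteration cannot approach $2$.
\end{itemize}

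\textbf{The missing idea.} The paper avoids derivatives of $f'$ entirely: after \eqref{3.44} it never differentiates the equation again. Instead it takes two solutions of \eqref{3.11} issued from $z^1,z^2\in\mathcal{A}_\alpha$ and estimates $\phi=w^1-w^2$ through \eqref{4.9p} (e.g.\ $\sigma_1=\rho_1^*$, $\sigma_2=\frac{2+\alpha}{2}$ in \eqref{4.27+}).
\begin{itemize}
\item The nonlinearity enters only through $|f(u^1)-f(u^2)|\le C(1+|u^1|^{p-1}+|u^2|^{p-1})(|\phi|+|v^1-v^2|)$, which needs nothing beyond \eqref{f2}.
\item The factor $A^{\rho_1^*}\phi_t$ is handled by the already known bound \eqref{3.44}.
\item The resulting difference estimate is converted into $w_t\in V_{2+\rho_m^*}$ by the difference-quotient argument of Lemma \ref{lemma4.3}. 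The position step then upgrades $w$, as in your scheme.
\end{itemize}
If you adopt this route, set it up with care. The argument of \cite{Chueshov-2013} behind Lemma \ref{lemma4.3} needs a right-hand side that scales quadratically in the difference. That scaling is lost if $\|A^{\rho}\phi_t\|$ is bounded by a constant, or if a term $C\|\phi\|^{\mu_1}$ with $\mu_1<2$ is kept.

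\textbf{A separate issue in your first stage.} You invoke \eqref{5.6}, which the paper proves only under \eqref{assp1}. Under \eqref{assp} alone the paper does not use it. Instead it splits $u_t=v_t+q$ in \eqref{3.17+}, uses \eqref{3.12p} for $v_t$, and absorbs the $q$-part into $k\varepsilon\|q\|_{V_{2+\sigma_2}}^2$.
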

	\begin{proof}
		Let $$v_{m}=\sum_{i=1}^{m}d_{im}(t)\omega_{i}(x), \quad \xi_{m}=\sum_{i=1}^{m}(d_{im}(t)-e_{im}(t-s))\omega_{i}(x),$$ $\omega_{i}\in C^{\infty}(\Omega)$ be the Galerkin approximate solutions to problem \eqref{3.10}; that is,
		\begin{align*}
			v_{mtt}+kA^{2}v_{m}+A^{\alpha}v_{mt}+\displaystyle\int_{0}^{\infty}g(s)A^{2}\xi_{m}ds=0,
		\end{align*}
		By virtue of \eqref{P5}--\eqref{P8} for $v_{m}$ (instead of $v$ there) and $\mathcal{F}=0$, we infer, for any $\beta:=\sigma_{1}=\sigma_{2}>0$,
		\begin{align*}
			&\frac{d}{dt}\left(E_{\beta}(v_{m})+\varepsilon I_{\beta}(v_{m})+\frac{\varepsilon}{2}J_{\beta}(v_{m})\right)+\|v_{mt}\|_{V_{\alpha+\beta}}^{2}\\
			&+k\varepsilon\|v_{m}\|_{V_{2+\beta}}^{2}+\frac{\varepsilon}{2}\|\xi_{m}\|_{\mathcal{M}_{2+\beta}}^{2}\le \varepsilon\|v_{mt}\|_{V_{\beta}}^{2}.
		\end{align*}
		Observe $$E_{\beta}(v_{m}(0))+\varepsilon I_{\beta}(v_{m}(0))+\frac{\varepsilon}{2}J_{\beta}(v_{m}(0))\le C$$ by $\omega_{i}\in C^{\infty}$, also note the fact that
		\begin{align*}
			E_{\beta}(v_{m})+\varepsilon I_{\beta}(v_{m})+\frac{\varepsilon}{2}J_{\beta}(v_{m})\sim \|v_{m}\|_{V_{2+\beta}}^{2}+\|v_{mt}\|_{V_{\beta}}^{2}+\|\xi_{m}\|_{\mathcal{M}_{2+\beta}}^{2}.
		\end{align*}
Hence, taking $\varepsilon$ small enough gives (by the Gronwall inequality)
		\begin{align*}
			\|(v_{m}, v_{mt}, \xi_{m})\|_{\mathcal{H}^{\beta}}^{2}\le C(\|(v_{m}(0),v_{mt}(0),\xi_{m}(0)\|_{\mathcal{H}^{\beta}})e^{-\gamma_{7}t},
		\end{align*}
with some $\gamma_{7}>0$. Thus, utilizing the lower semicontinuity of the norm associated with the weak limit, along with the uniqueness of the weak solution, we obtain the estimate \eqref{3.12p}.
		
		For the equation \eqref{3.15+}, replacing $\varphi$ by $q$, and taking $\mathcal{F}=f^{\prime}(u)u_{t}$, $\sigma_{1}=\alpha$ and $\sigma_{2}=\min\{2\alpha,2\}$ in \eqref{P5}--\eqref{P8} yields
		\begin{align}\label{3.16+}
			&	\frac{d}{dt}\left(E_{\alpha}(q)+\varepsilon I_{\sigma_{2}}(q)+\frac{\varepsilon}{2}J_{\sigma_{2}}(q)\right)+\|q_{t}\|_{V_{2\alpha}}^{2}+\frac{\varepsilon}{2}\|\zeta_{t}\|_{\mathcal{M}_{2+\sigma_{2}}}^{2}+k\varepsilon\|q\|_{V_{2+\sigma_{2}}}^{2}\nonumber\\
			\le&\varepsilon\|q_{t}\|_{V_{\sigma_{2}}}^{2}-\int_{\Omega}f^{\prime}(u)u_{t}(A^{\alpha}q_{t}+\varepsilon A^{\sigma_{2}}q)dx.
		\end{align}
		By the interpolating inequality and from \eqref{assp} and \eqref{3.12p}, we have,  for $0<\varepsilon_{0}<<1$,
		\begin{align}\label{3.17+}
			&	-\int_{\Omega}f^{\prime}(u)u_{t}(A^{\alpha}q_{t}+\varepsilon A^{\sigma_{2}}q)dx\nonumber\\
			\le & C\int_{\Omega}(1+|u|^{p-1})(|v_{t}|+|q|)(|A^{\alpha}q_{t}|+\varepsilon |A^{\sigma_{2}}q|)dx\nonumber\\
			\le&C\left(\|v_{t}\|_{L^{\frac{2n}{n-2(2+\sigma_{2}-\varepsilon_{0})}}}+ \|q\|_{L^{\frac{2n}{n-2(2+\sigma_{2}-\varepsilon_{0})}}}\right)\left(\|A^{\alpha}q_{t}\|+\|A^{\sigma_{2}}q\|_{L^{\frac{2n}{n-2(2-\sigma_{2})}}}\right)\nonumber\\
			&\times\left(1+\|u\|_{L^{\frac{(p-1)n}{2+\sigma_{2}-\varepsilon_{0}}}}^{p-1}+\|u\|_{L^{\frac{(p-1)n}{4-\varepsilon_{0}}}}^{p-1}\right)\nonumber\\
			\le&\frac{k\varepsilon}{2}\|q\|_{V_{2+\sigma_{2}}}^{2}+\frac{1}{2}\|q_{t}\|_{V_{2\alpha}}^{2}+C.
		\end{align}
		Insert \eqref{3.17+} into \eqref{3.16+} and take $\varepsilon$ small enough; it follows that
		\begin{align*}
			&	\frac{d}{dt}\left(E_{\alpha}(q)+\varepsilon I_{\sigma_{2}}(q)+\frac{\varepsilon}{2}J_{\sigma_{2}}(q)\right)+\frac{1}{4}\|q_{t}\|_{V_{2\alpha}}^{2}+\frac{\varepsilon}{2}\|\zeta_{t}\|_{\mathcal{M}_{2+\sigma_{2}}}^{2}+\frac{k\varepsilon}{2}\|q\|_{V_{2+\sigma_{2}}}^{2}\le C.
		\end{align*}
Therefore,
		\begin{equation}\label{3.44}
			\|w_{t}\|_{V_{2+\alpha}}^{2}+\|w_{tt}\|_{V_{\alpha}}^{2}+\|\zeta_{t}\|_{\mathcal{M}_{2+\alpha}}^{2}\le C,
		\end{equation}
owing to the estimate \eqref{P9} and $q=w_{t}$.

		 Replacing $\varphi$ with $w$ and taking $\sigma_{1}=\frac{2+\alpha}{2}, \sigma_{2}=2$ in \eqref{P5}-- \eqref{P8} leads to
		\begin{align}\label{3.37}
			&\frac{d}{dt}\left(E_{\frac{2+\alpha}{2}}(w)+\varepsilon I_{2}(w)+\frac{\varepsilon}{2}J_{2}(w)\right)+\|w_{t}\|_{V_{\frac{2+\alpha}{2}+\alpha}}^{2}+\frac{\varepsilon}{2}\|\zeta\|_{\mathcal{M}_{4}}^{2}+k\varepsilon\|w\|_{V_{4}}^{2}\nonumber\\
			\le&\varepsilon\|w_{t}\|_{V_{2}}^{2}-\int_{\Omega}f(u)\left(A^\frac{2+\alpha}{2}w_{t}+\varepsilon A^{2}w\right)dx+\int_{\Omega}h(x)\left(A^{\frac{2+\alpha}{2}}w_{t}+\varepsilon A^{2}w\right)dx.
		\end{align}
		We observe
		\begin{align}\label{3.38}
			\int_{\Omega}h(x)\left(A^{\frac{2+\alpha}{2}}w_{t}+\varepsilon A^{2}w\right)dx\le \frac{1}{4}\|w_{t}\|_{V_{2+\alpha}}^{2}+\frac{k\varepsilon}{4}\|w\|_{V_{4}}^{2}+C\|h\|^{2},
		\end{align}
		and for $0<\varepsilon_{0}<<1$,
		\begin{align}\label{3.39}
			&-\int_{\Omega}f(u)\left(A^\frac{2+\alpha}{2}w_{t}+\varepsilon A^{2}w\right)dx\nonumber\\
			\le& C\int_{\Omega}(1+|u|^{p-1})(|v|+|w|)\left(\left|A^{\frac{2+\alpha}{2}}w_{t}\right|+\varepsilon |A^{2}w|\right)dx\nonumber\\
			\le&C\left(\|v\|_{L^{\frac{2n}{n-2(4-\varepsilon_{0})}}}+\|w\|_{L^{\frac{2n}{n-2(4-\varepsilon_{0})}}}\right)\left(\|w_{t}\|_{V_{2+\alpha}}+\varepsilon \|w\|_{V_{4}}\right)\left(1+\|u\|_{L^{\frac{(p-1)n}{4-\varepsilon_{0}}}}^{p-1}\right)\nonumber\\
			\le &\frac{1}{4}\|w_{t}\|_{V_{2+\alpha}}^{2}+\frac{k\varepsilon}{4}\|w\|_{V_{4}}^{2}+C,
		\end{align}
 by \eqref{3.12p} and the interpolation inequality.	Inserting \eqref{3.38} and \eqref{3.39} into \eqref{3.37},  we obtain
		\begin{align}\label{3.40}
			&\frac{d}{dt}\left(E_{\frac{2+\alpha}{2}}(w)+\varepsilon I_{2}(w)+\frac{\varepsilon}{2}J_{2}(w)\right)+\frac{1}{2}\|w_{t}\|_{V_{\frac{2+\alpha}{2}+\alpha}}^{2}+\frac{\varepsilon}{2}\|\zeta\|_{\mathcal{M}_{4}}^{2}+\frac{k\varepsilon}{2}\|w\|_{V_{4}}^{2}\le C,
		\end{align}
		and for $\varepsilon$ small enough, combining \eqref{P9} and \eqref{3.40} gives
		\begin{align}\label{3.42}
			\|(w,w_{t},\zeta)\|_{\mathcal{H}^{\frac{2+\alpha}{2}}}^{2}\le C.
		\end{align}
		Combining this together with \eqref{3.12p}, \eqref{3.44} and \eqref{3.42}, we see that the attractor $\mathcal{A}_{\alpha}$ is bounded in $V_{\frac{6+\alpha}{2}}\times V_{2+\alpha}\times \mathcal{M}_{\frac{6+\alpha}{2}}$ by Remark A.5 in \cite{Conti-2005-}.

Next, we exploit the bootstrap argument again.
 Let $$\rho_{1}^{*} := \sigma_{1} \in \left(\frac{3\alpha+2}{4},\frac{\alpha + 2}{2}\right], ~~\sigma_{2} = \frac{2+\alpha}{2}$$ in \eqref{4.9p} and re-estimate inequality \eqref{4.27-}. Then we have, for any $0<\varepsilon_{0}<<1$,
		\begin{align}\label{4.27+}
			& -\int_{\Omega}\left(f(u^{1})-f(u^{2})\right)\left(A^{\rho_{1}^{*}}\phi_{t}+\varepsilon A^{\frac{2+\alpha}{2}}\phi\right)dx\nonumber\\
			\le& C\left(\|\phi\|_{L^{\frac{2n}{n-2\left(2+\frac{2+\alpha}{2}-\varepsilon_{0}\right)}}}+\|v_{1}-v_{2}\|_{L^{\frac{2n}{n-2\left(2+\frac{2+\alpha}{2}-\varepsilon_{0}\right)}}}\right)\nonumber\\
			&\times \left(\left\|A^{\rho_{1}^{*}}\phi_{t}\right\|_{L^{\frac{2n}{n-2(2+\alpha-2\rho_{1}^{*})}}}+\varepsilon \left\|A^{\frac{2+\alpha}{2}}\phi\right\|_{\frac{2n}{n-2\left(2-\frac{\alpha+2}{2}\right)}}\right)\nonumber\\
			&\times\left(1+\|u^{1}\|_{L^{\frac{(p-1)n}{4+\frac{2+\alpha}{2}+\alpha-2\rho_{1}^{*}-\varepsilon_{0}}}}^{p-1}+\|u^{2}\|_{L^{\frac{(p-1)n}{4+\frac{2+\alpha}{2}+\alpha-2\rho_{1}^{*}-\varepsilon_{0}}}}^{p-1}\right)\nonumber\\
			\le&\frac{k\varepsilon}{2}\|\phi\|_{V_{\frac{6+\alpha}{2}}}^{2}+C\left(\|v_{1}\|_{V_{\frac{6+\alpha}{2}}}^{2}+\|v_{2}
\|_{V_{\frac{6+\alpha}{2}}}^{2}+\|v_{1}\|^{\mu_{1}}+\|v_{2}\|^{\mu_{1}}\right)+C\|\phi\|^{\mu_{1}}.
		\end{align}
		Here, when $4<n\le 8+4\min\{\alpha,1\}$ and $1\le p<p^{*}$, one has the embedding $V_{\frac{6+\alpha}{2}}\hookrightarrow L^{\frac{(p-1)n}{2+\frac{2+\alpha}{2}}}$. Consequently, by selecting $\rho_{1}^{*}=\frac{2+\alpha}{2}$, the inequality \eqref{4.27+} is ensured to hold. When $n>8+4\min\{\alpha,1\}$ and $1\le p<p^{*}$, the validity of the last inequality in \eqref{4.27+} hinges on the following relationship between the exponent $p$ and the dimension $n$:
		\begin{align*}
			\frac{(p-1)n}{4+\frac{2+\alpha}{2}+\alpha-2\rho_{1}^{*}}< \frac{2n}{n-2\left(2+\frac{2+\alpha}{2}\right)} \Longrightarrow p<p^{*}\le \frac{n+4+2\alpha-4\rho_{1}^{*}}{n-2\left(2+\frac{2+\alpha}{2}\right)},
		\end{align*}
	where we set
		\begin{align*}
			\rho_{1}^{*}=\min \left\{\frac{2+\alpha}{2}, \left(\frac{3}{4}+\frac{1+\min\{\alpha,1\}}{n-4}\right)\alpha+\frac{3-2\min\{\alpha,1\}}{2}+\frac{2(1+\min\{\alpha,1\})}{n-4}\right\}.
		\end{align*}
		Inserting \eqref{4.27+} into \eqref{4.9p} and using \eqref{3.12p} shows
		\begin{align*}
			\|W_{1}-W_{2}\|_{\mathcal{H}^{\rho_{1}^{*}}}^{2}\le Ce^{-\gamma_{8} t}+C\left(1-e^{-\gamma_{8}t}\right)\sup_{s\in[0,t]}\|w^{1}-w^{2}\|^{\mu_{1}},
		\end{align*}
 with some$\gamma_{8}>0$. Hence, from Lemma \ref{lemma4.3} we also have $w_{t}\in V_{2+\rho_{1}^{*}}$. Then, replacing $\sigma_{1}=\frac{2+\alpha}{2}$ with $\sigma_{1}=\frac{2+\rho_{1}^{*}}{2}$ in \eqref{3.37} gives $w\in V_{2+\frac{2+\rho_{1}^{*}}{2}}$. Continuing in this way, we can obtain an increasing sequence $\{\rho_{m}^{*}\}_{m\ge0}$ with $\rho_{0}^{*}=\alpha$, such that $w_{t}\in V_{2+\rho_{m}^{*}}$ and $w\in V_{2+\frac{2+\rho_{m}^{*}}{2}}$. Explicitly, this sequence  is given by
  $$\rho_{m+1}^{*}=\frac{2+\rho_{m}^{*}}{2} \quad\quad \mbox{if } \ 4<n\le 8+4\min\{\alpha,1\},$$ and $ \rho_{m+1}^{*}$ is equal to
		\begin{align*}
			\min\left\{\frac{2+\rho_{m}^{*}}{2}, \left(\frac{3}{4}+\frac{1+\min\{\alpha,1\}}{n-4}\right)\rho_{m}^{*}+\frac{3-2\min\{\alpha,1\}}{2}+\frac{2(1+\min\{\alpha,1\})}
{n-4}\right\}
		\end{align*}
if $n>8+4\min\{\alpha,1\}$.
Hence, we deduce
		\begin{align*}
			\lim_{m\to\infty}\rho_{m}^{*}=2,~~\mbox{if}~ 4<n\le 8+4\min\{\alpha,1\}.
		\end{align*}
		For $n> 8+4\min\{\alpha,1\}$, denote by $\rho^{*}$ the limit value of $\rho_{m}^{*}$ as $m\rightarrow \infty$. Then, it is clear that
\begin{align*}
		\rho^{*}=	\min\left\{\frac{2+\rho^{*}}{2}, \left(\frac{3}{4}+\frac{1+\min\{\alpha,1\}}{n-4}\right)\rho^{*}+\frac{3-2\min\{\alpha,1\}}{2}+\frac{2(1+\min\{\alpha,1\})}{n-4}\right\},
		\end{align*}
		which indicates
		\begin{align*}
			\lim_{m\to \infty}\rho_{m}^{*}=\min\left\{2, 6-4\min\{\alpha,1\}+\frac{8(1+\min\{\alpha,1\})(4-2\min\{\alpha,1\})}{n-8-4\min\{\alpha,1\}}\right\}=2.
		\end{align*}
		For the case of $1\le n\le4$ (and so $1\le p< \infty$), we can also produce a sequence $\{\widehat{\rho}_{m}\}_{m\ge 0}$ with $\widehat{\rho}_{0}=\alpha$, satisfying $\widehat{\rho}_{m}=1+\frac{\widehat{\rho}_{m-1}}{2}\to 2$ as $m\to \infty$.

Finally, with some arguments similar to those for \eqref{222}, we infer that $(w,w_{t}, \zeta)\in \mathcal{H}^{2}$. In conclusion, we justify the estimate \eqref{3.12q}.
	\end{proof}

\vspace{0.5cm}

{\bf Data Availability}

Data sharing is not applicable to this article as no datasets were generated or analyzed during the current study.

{\bf Conflict of interest}

The authors have no competing interests to declare.

{\bf Ethical Statement}

There are no ethical concerns applicable to our research.


\vspace{0.4cm}	

	

\begin{thebibliography}{99}
	\bibitem{Araruna-2018} F.D. Araruna,  P. Braz e Silva,  P. Queiroz-Souza,
	Asymptotic limits and stabilization for the 2D nonlinear Mindlin-Timoshenko system,
	Anal. PDE. 11 (2018) 351-382.
	
	\bibitem{Azevedo-2023}V.T. Azevedo, E. Bonotto, A.C. Cunha, M.J.D. Nascimento, Existence and stability of pullback exponential attractors for a nonautonomous semilinear evolution equation of second order, J. Differ. Equ. 365 (2023) 521-559.


	\bibitem{Berger-1955} H.M. Berger, A new approach to the analysis of large deflections of plates, J. Appl. Mech. 22 (1955) 465-472.
	\bibitem{Cavalcanti-2016} M. Cavalcanti, L. Fatori, T.F. Ma, Attractors for wave equations with degenerate memory, J. Differ. Equ. 260 (2016) 56-83.

\bibitem{Chen-1982}G. Chen, D.L. Russell, A mathematical model for linear elastic systems with structural damping, Q. Appl. Math. 39 (1982) 433-454.

	\bibitem{Chueshov-2015}I. Chueshov, Dynamics of Quasi-Stable Dissipative Systems, Cham: Springer, 2015.

		\bibitem{Chueshov-2013} I. Chueshov, I. Lasiecka,  Well-posedness and long-time behavior in nonlinear dissipative hyperbolic-like evolutions with critical exponents, HCDTE Lecture Notes,
	Part I, Nonlinear Hyperbolic PDEs, Dispersive and Transport Equations, AIMS
	Ser. Appl. Math., 6, Am. Inst. Math. Sci. (AIMS), Springfield, MO, 2013.
	
	\bibitem{Conti-2005-} M. Conti, V. Pata, Weakly dissipative semilinear equations of viscoelasticity, Commun. Pure Appl. Anal. 4(2005) 705-720.
	
	
	\bibitem{Dafermos-1970}C.M. Dafermos, Asymptotic stability in viscoelasticity, Arch. Ration. Mech. Anal. 37 (1970) 297-308.
	
	\bibitem{Di Plinio-2008}F. Di Plinio, V. Pata,  S. Zelik, On the strongly damped wave equation with memory, Indiana Univ. Math. J. 57 (2008) 757-780.
	
\bibitem{Duan25}Y.Y. Duan, Long-term behavior of solutions to elastic dynamical systems with memory effects, Ph.D. Dissertation, Fudan Uni., Shanghai, March, 2025.

	\bibitem{Freitas-2025}M.M. Freitas, D.S. Almeida J\'unior,  A.J.A. Ramos, M.J. Dos Santos, R.Q. Caljaro,
	Long-time dynamics and singular limit of a shear beam model,
	Math. Ann. 391 (2025) 2149-2171.
	\bibitem{Freitas-2018}M.M. Freitas, P. Kalita, J.A. Langa, Continuity of non-autonomous attractors for hyperbolic perturbation of parabolic equations, J. Differ. Equ. 264 (2018) 1886-1945.
	
	
	\bibitem{Gatti-2008}  S. Gatti, A. Miranville, V. Pata, S. Zelik, Attractors for semi-linear equations of viscoelasticity with
	very low dissipation, Rocky Mount. J. Math. 38 (2008) 1117-1138.
	
	\bibitem{Giorgi-2009} C. Giorgi, M.G. Naso, V. Pata, M. Potomkin, Global attractors for the extensible thermoelastic beam system, J. Differ. Equ. 246 (2009) 3496-3517.
	
	\bibitem{131}C.	Giorgi, V. Pata, A. Marzocchi,
	Asymptotic behavior of a semilinear problem in heat conduction with memory,
	NoDEA Nonlinear Differential Equations Appl. 5 (1998) 333-354.
	
\bibitem{Gomes-2024} H. Gomes Tavares, M.A. Jorge Silva, I. Lasiecka, V. Narciso, Dynamics of extensible beams with nonlinear noncompact energy-level damping, Math. Ann. 390 (2024) 1821-1862.	

\bibitem{IX24} Y. Inahama, Y. Xu, X.Y. Yang, Moderate deviations for rough differential equations, Bull. London Math. Soc, 56 (2024) 2738-2748.


\bibitem{Silva-2015} M.A. Jorge Silva, V. Narciso, Attractors and their properties for a class of nonlocal extensible beams, Discrete Contin. Dyn. Syst. 35 (2015) 985-1008.
	

	\bibitem{la26} I. Lasiecka, J.H. Rodrigues, M. Roy, Attractors for second order in time non-conservative dynamics with nonlinear damping, J. Differ. Equ. 451 (2026) 113760.

\bibitem{LX25}C. Li, H.-K. Xu, Dynamic behaviors for the acoustic model with variable coefficients and nonautonomous damping, Z. Angew. Math. Phys. 76 (2025)  article number 17.


	\bibitem{Li-2020}Y.A. Li, Z.J. Yang, Optimal attractors of the Kirchhoff wave model with structural nonlinear damping,  J. Differ. Equ. 268 (2020) 7741-7773.
	\bibitem{Li-2021}Y.A. Li, Z.J. Yang,
	Strong attractors and their continuity for the semilinear wave equations with fractional damping, Adv. Differential Equations, 26 (2021) 45-82.

\bibitem{LX20}Q. Liu, Y. Xu, J. Kurths, Bistability and stochastic jumps in an airfoil system with viscoelastic
material property and random fluctuations, Commun. Nonlinear Sci. Numer. Simulat.  84 (2020)  105184.


\bibitem{Liu-2024}Z.M. Liu, Z.J. Yang, Y.Y. Guo,
	Stability of strong attractors for the extensible beam equation with gentle dissipation, J. Math. Anal. Appl. 533 (2024) 127999.
	
	\bibitem{Ma-2010}T.F. Ma, V. Narciso, Global attractor for a model of extensible beam with nonlinear damping and source terms, Nonlinear Anal. 73 (2010) 3402-3412.

	\bibitem{Ma-2004}Q.Z. Ma, C.K. Zhong, Existence of strong global attractors for hyperbolic equation with linear memory, Appl. Math. Comput. 157 (2004) 745-758.
	
	\bibitem{Messaoudi-2017} S.A. Messaoudi, A. Bonfoh, S.E. Mukiawa, C.D. Enyi, The global attractor for a suspension bridge with memory and partially hinged boundary conditions, ZAMM Z. Angew. Math. Mech. 97 (2017) 159-172.
	

\bibitem{Miranda-2025}L.G.R. Miranda, C.A. Raposo, M.M. Freitas,
	Global and exponential attractors for a suspension bridge model with nonlinear damping,
	J. Differ. Equ. 431 (2025) 113217.
	
	
	\bibitem{Pata-2001}V. Pata, A. Zucchi, Attractors for a damped hyperbolic equation with linear memory, Adv. Math. Sci. Appl. 11 (2001) 505-529.
	
\bibitem{PSX24}B. Pei, B. Schmalfuss, Y. Xu, Almost sure averaging for evolution equations driven by fractional
brownian motions, SIAM J. Appl. Dyn. Sys. 23 (2024) 2807-2852.

	\bibitem{Temam-1988} R. Temam, Infinite-dimensional Dynamical Systems in Mechanics and Physics, Springer
	Verlag, New York, 1988.
	

	\bibitem{Yang-2013} Z.J. Yang, On an extensible beam equation with nonlinear damping and source terms, J. Differ. Equ.
	254 (2013) 3903-3927.
	\bibitem{Yang-2017}Z.J. Yang, P.Y. Ding,
	Longtime dynamics of Boussinesq type equations with fractional damping, Nonlinear Anal. 161 (2017) 108-130.
	\bibitem{Yang-2018}Z.J. Yang, Z.M. Liu,
	Stability of exponential attractors for a family of semilinear wave equations with gentle dissipation, J. Differ. Equ. 264 (2018) 3976-4005.
	
	\bibitem{Yang-2024} B. Yang, Y.M. Qin, A. Miranville,
	K. Wang, Existence and regularity of global attractors for a Kirchhoff wave equation with strong damping and memory, Nonlinear Anal. Real World Appl. 79 (2024) 104096.
	
	
	
	
\end{thebibliography}
\end{document}